\documentclass{article}
\usepackage[a4paper, total={6in, 8in}]{geometry}
\usepackage[utf8]{inputenc}
\usepackage{amsmath}  
\usepackage{amssymb}  
\usepackage{ulem} 
\usepackage{amsfonts}  
\usepackage{array}  
\usepackage{theorem}  
\usepackage{graphicx}
\usepackage{setspace} 
\usepackage{fancyhdr}       
\usepackage{hhline}        
\usepackage[T1]{fontenc}              
\usepackage{wrapfig} 
\usepackage{epsfig}
\usepackage{amsbsy}              
\usepackage{euscript} 
\usepackage{epsf}  
\usepackage{color,float}
\usepackage{epstopdf}
\usepackage{subfig}
\usepackage{float}

\newtheorem{Theo}{Theorem}  
\newtheorem{Lem}{Lemma}  
\newtheorem{Prop}{Proposition}

\newtheorem{Rem}{Remark}

% variables

\newcommand{\vE}{{\mathbf{e}}}

\newcommand{\vEs}{{\mathbf{e}^s}}
\newcommand{\vHs}{{\mathbf{h}^s}}

\newcommand{\vH}{{\mathbf{h}}}

\newcommand{\va}{{\mathbf{a}}}

\newcommand{\vJext} {\magnetic^{+}\vecv{e}}
\newcommand{\vMext} {\tangential^{+}\vecv{e}}
\newcommand{\vn} {{\pmb{\nu}}}

\newcommand{\vu} {{\mathbf{u}}}
\newcommand{\vv} {{\mathbf{v}}}
\newcommand{\vvg}{{\mathbf{v}_{|\Gamma}}}

\newcommand{\vO} {{\mathbf{0}}}

\newcommand{\kke} {k^2_{\varepsilon}}

% operators
%\newcommand{\curl} {\mathop{\mathrm{\bf curl}}\nolimits}
\newcommand{\curlg} {\mathop{\mathrm{curl}_{\Gamma}}\nolimits}
\newcommand{\vcurlg} {\mathop{\mathrm{\bf curl}_{\Gamma}}\nolimits}

\newcommand{\vnablag} {\nabla_{\Gamma}}

\newcommand{\divg} {\mathop{\mathrm{div}_{\Gamma}}\nolimits}
\newcommand{\Deltag} {\Delta_{\Gamma}}

\newcommand{\vDeltag} {{\bf \Delta}_{\Gamma}}
\newcommand{\wLambda} {\pmb{\Lambda}}
\newcommand{\wLambdaeps} {\pmb{\Lambda}_{\varepsilon}}

\newcommand{\vecv}[1]{\mathbf{#1}}

\newcommand{\vcurl}{\textbf{curl}}%[1][]{\textup{\textbf{curl}}\ifthenelse{\equal{#1}{}}{}{_#1}}

\newcommand{\SLO}{\vecv{S}_{\kappa}}
\newcommand{\DLO}{\vecv{C}_{\kappa}}
\newcommand{\SLOh}{\mathbb{S}_{\kappa}}

\newcommand{\vx}{\mathbf{x}}
\newcommand{\vy}{\mathbf{y}}

\newcommand{\curl}{\mathbf{curl}}

\newcommand{\sgrad}{\nabla_{\Gamma}}

\newcommand{\hsgrad}{\nabla_{\Gamma_{h}}}
\newcommand{\scurl}{\text{curl}_{\Gamma}}
\newcommand{\hscurl}{\text{curl}_{\Gamma_{h}}}
\newcommand{\vscurl}{\textbf{\text{curl}}_{\Gamma}}
\newcommand{\cnabla}[1]{\nabla_{#1}}

\newcommand{\sg}[2]{{#1}^{#2}}

\newcommand{\four}[1]{\mathcal{F}(#1)}
\newcommand{\vDeltas} {{\bf \Delta}_{S_{R}}}

% traces
\newcommand{\magnetic}{\gamma_{\vecv{N}}}
\newcommand{\tangential}{\gamma_{\vecv{t}}}

% spaces 
\newcommand{\R}{\mathbb{R}}

\newcommand{\Htdiv}{{\bf H}^{-1/2}_{\times}(\divg,\Gamma)}
\newcommand{\Htcurl}{{\bf H}^{-1/2}_{\times}(\curlg,\Gamma)}

\newcommand{\dsp}{\displaystyle}
\newcommand{\ens}[2]{\left\{{#1}\left|\ {#2}\right.\right\}}

\newcommand{\gdt}{\vecv{gd}_{\Gamma, \boldsymbol{\xi}}}
\newcommand{\cct}{\vecv{cc}_{\Gamma, \boldsymbol{\xi}}}

\newtheorem{proposition}{Proposition}
\newtheorem{corollary}{Corollary}

\newenvironment{proof}{\noindent{\bf Proof.~}}
{{\mbox{}\hfill {\small \fbox{}}\\}}
\title{A general framework for the OSRC-preconditioned EFIE in computational electromagnetics}
\author{Marion Darbas, Ignacia Fierro-Piccardo}
\date{}

\begin{document}

\maketitle
\normalem

\begin{abstract}
This work presents a comprehensive study of preconditioning strategies for the Electric Field Integral Equation (EFIE) using On-Surface Radiation Condition (OSRC) operators. We examine two distinct formulations—the Magnetic-to-Electric (MtE) and Electric-to-Magnetic (EtM) maps—used to precondition the EFIE, and we analyze their spectral properties, discretization behavior, and numerical performance. A central objective is to bridge the gap between theoretical development and practical implementation, identifying the strengths and limitations of each approach. Through numerical experiments on smooth, closed geometries, we show that the MtE formulation stands out as a cost-effective preconditioner in the context of Boundary Element Methods. We also offer implementation guidelines and propose improvements to address existing challenges. These findings provide a valuable reference for researchers and practitioners working with preconditioned boundary integral formulations in computational electromagnetics.
\end{abstract}

\section{Introduction}

To this day, extensive literature has focused on the study of On-Surface Radiation Conditions (OSRC) applied to the solution of wave propagation problems using boundary integral equation methods (see e.g. \cite{antoine2008advances}). The \emph{OSRC technique} was started by Kriegsmann et al. \cite{kriegsmann1987new} with the aim to find closed formulas of the approximate 
scattered field for 2D acoustic problems, by placing radiation conditions on the boundary of the scatterer. In the years that followed, this technique was generalized to 3D domains
\cite{jones1988approximate,jones1988surface, jones1992improved}, and further extended to electromagnetic problems 
\cite{jones1988surface, ammari1998scattering, ammari1998surface, antoine1999bayliss, darbas2004preconditionneurs, antoine2007generalized, roxburgh1997electromagnetic}. 
In the early 2000s, new expressions for the OSRC operator were introduced through microlocal analysis, which were later proposed as preconditioners
(see, for instance, the review of Antoine and Darbas in \cite{antoine2021introduction}) 
for combined formulations in acoustic \cite{antoine2006improved, antoine2007generalized} and electromagnetic problems
\cite{boubendir2014well, darbas2004preconditionneurs, antoine2004analytic}. Recently, Van't Wout et al. have used an OSRC
operator to precondition a multiscatterer acoustic problem \cite{van2022frequency}. In said study, Van't Wout et al. applied a finite element representation of the OSRC operator to their Boundary Integral Equation formulation, obtaining satisfactory results. 

Under the same approach, Fierro and Betcke used the finite element representation that El Bouajaji et al. have shown in
\cite{el2014approximate} to develop a refinement-free preconditioner for the Electric Field 
Integral Equation(EFIE), showing the effectiveness of this method on open and closed geometries \cite{BetckeFierroPiccardo}.
The OSRC operator in question is the Magnetic-to-Electric (MtE) map, which, in simple terms, is the electromagnetic 
equivalent to the acoustic Neumann-to-Dirichlet (NtD) operator. Despite that the numerical experiments prove the suitability of 
this operator as an efficient refinement-free preconditioner for the EFIE in a high-frequency regime, the explanations therein 
presented of why this method works in this regime are only partial.

Having this in mind, this paper serves two complementary purposes. First, it formalizes and analyzes the use of OSRC-based 
operators -specifically the Magnetic-to-Electric (MtE) and Electric-to-Magnetic (EtM) maps- as preconditioners for the 
EFIE on closed and smooth surfaces. Second, it acts as a practical reference that consolidates a set of tools, approximations, and implementation strategies helpful in applying these techniques in practice. By focusing on the spectral properties and discretization of various OSRC preconditioners, we highlight their strengths and limitations for different numerical schemes.

In this sense, the work not only contributes theoretical results, such as well-posedness of the preconditioned systems, but also provides guidance on how different modeling and discretization choices 
(e.g., basis functions, Padé approximants, operator ordering, use of mass matrices) influence performance. 
We believe this dual perspective is crucial for researchers and practitioners aiming to deploy OSRC-based
preconditioners effectively, or preconditioners in general.

We illustrate these results in a Galerkin setting, but for the sake of brevity, 
this document focuses mainly on proofs and tests for left preconditioners, but we also propose formulations and hints for implementing 
right preconditioners as a suitable alternative.

We have structured this work as follows. Section \ref{sect:problem} presents the modeling of electromagnetic scattering by a Perfect Electric Conductor (PEC) and the Boundary Integral Equations (BIEs) used to solve this problem through the Boundary Element Method, to finally introduce the Electric-to-Magnetic (EtM) and Magnetic-to-Electric (MtE) operators. Then, in section \ref{sect:theoretic_results} we show the OSRC approximations to both the EtM and MtE operators and prove the 
existence and uniqueness of solutions of the OSRC-preconditioned EFIE and, in particular, the fact that these are second-kind Fredholm formulations. In this section, we also show the Padé approximants of the OSRC MtE and EtM operators and study the spectral properties of different preconditioned formulations. In section \ref{sect:bem_impl}, we explain the implementation of the OSRC-preconditioned EFIE in the context of BEM and show how well these implementations reproduce the 
results we have obtained in section \ref{sect:theoretic_results}, besides benchmarking the efficiency of different left preconditioners. To finalize, we state our concluding remarks in section \ref{sect:conclusions}, highlighting the most important results produced in this study.

\section{Problem setting and the EFIE}\label{sect:problem}

\subsection{Electromagnetic scattering by Perfect Electric Conductors (PEC)}

Let $\Omega^{-}\subset\mathbb{R}^{3}$ be a bounded domain, with a regular compact boundary $\Gamma=\partial \Omega^{-}$ whose associated exterior propagation domain is $\Omega^{+} = \mathbb{R}^{3} \setminus \overline{\Omega^{-}}$. Suppose that this object is illuminated by an incident electromagnetic plane wave $(\vecv{e}^{i}, \vecv{h}^{i})$ propagating in the time-harmonic regime at pulsation $\omega>0$ within $\Omega^{+}$. As a consequence of this interaction, the field is scattered
and is denoted $(\vecv{e}^{s}, \vecv{h}^{s})$. In this way, the total electromagnetic field
$(\vecv{e}^{t}, \vecv{h}^{t})$ is given by
\begin{equation}
\label{scattered}
\left\{ 
\begin{array}{llll}
\vecv{e}^{t} & = & \vecv{e}^{s} + \vecv{e}^{i} & \textrm{in $\Omega^{+}$}, \\
\vecv{h}^{t} & = & \vecv{h}^{s} + \vecv{h}^{i} &  \textrm{in $\Omega^{+}$}.
\end{array} 
\right.
\end{equation}
The scattered field satisfies the system of harmonic Maxwell equations \cite{colton2013integral, monk2003finite, nedelec2001acoustic}
\begin{equation}
\label{Maxwell}
\left\{ 
\begin{array}{ll}
\curl \; \vecv{h}^{s} + i\kappa \vecv{e}^{s} = 0 & \textrm{in $\Omega^+$}, \\
\curl \; \vecv{e}^{s} - i\kappa \vecv{h}^{s} = 0 &  \textrm{in $\Omega^+$}, 
\end{array} 
\right.,
\end{equation}
with wavenumber $\kappa:=\omega\sqrt{\varepsilon_{0}\mu_{0}}$, where $\varepsilon_{0}$ and $\mu_{0}$ are the electric permeability and magnetic permittivity in the vacuum, respectively. To these equations, we add a Perfect Electrically Conducting (PEC) boundary condition
\begin{equation}
\label{eq:conducting}
\vecv{e}^{t}_{|\Gamma}\times\vn =0 \Leftrightarrow \vecv{e}^{s}_{|\Gamma} \times \vn = -\vecv{e}^{i}_{|\Gamma} \times \vn, \ \textrm{ on } \Gamma,
\end{equation}
 where the vector $\vn$ denotes the unit outward normal to $\Omega^-$. In addition, to obtain a unique solution to the above boundary-value problem, the scattered fields \cite{monk2003finite, nedelec2001acoustic}
$(\vEs, \vHs)$ must radiate at infinity, which translates into satisfying the Silver-M\"uller radiation condition 

\begin{align}
\label{Silver-Muller}
\displaystyle \lim_{|\vx| \rightarrow +\infty} |\vx| \left( \curl\;\vHs \times\frac{\vx}{|\vx|} +  \vEs \right)=0.
\end{align}

We solve the problem \eqref{Maxwell}-\eqref{eq:conducting}-\eqref{Silver-Muller} using the Boundary Integral Equation method, for which we establish the function spaces framework in the next section.

\subsection{Function spaces background and notations} 
 
Let $D \subset \mathbb{R}^3$ be any of the sets $\Omega^-$ or $\Omega^+ \cap B_R$ for $R>0$. Let $L^2(D)$ be the space of square-integrable scalar functions in a domain $D$, and let $H^s(\Gamma)$, $s \geq 1$, be scalar functional Sobolev spaces (see, e.g., \cite{adams2003sobolev} for definitions). In the same way, we define spaces for vector functions:
$\vecv{L}^2(D) = (L^2(D))^{3}$, $\vecv{H}^s(\Gamma) = (H^s(\Gamma))^{3}$, and assume the convention
${\bf H}^0 \equiv {\bf L}^2$ (with the usual inner product $(\vecv{u}, \vecv{v})_{\Gamma} = \int_{\Gamma}\vecv{u}\cdot\vecv{v}\; \text{d}\Gamma$). 
We define the spaces
\begin{equation*}
\begin{array}{lll}
{\bf H}(\curl, D)  & := &  \ens{\vv  \in {\bf L}^2(D)}{\curl\; \vv \in {\bf L}^2(D)} \\ 
{\bf H}(\curl^{2}, D)  & := &  \ens{\vv  \in {\bf H}(\curl, D)}{\curl^{2} \vv \in {\bf L}^2(D)}. \\ 
\end{array}
\end{equation*}

From now on, we assume that $\Gamma$ is piecewise smooth or Lipschitz, and we define the space of tangential fields of regularity order $s \in \R$
\[
{\bf H}_{\times}^s(\Gamma):=\ens{\vv \in {\bf H}^s(\Gamma)}{ \vv \cdot \vn =0 \mbox{ on } \Gamma} = \vn \times {\bf H}^s(\Gamma).
\]
The dual spaces of $H^s(\Gamma)$ and ${\bf H}_{\times}^s(\Gamma)$ with respect to the inner products
$L^2(\Gamma)$ or $\vecv{L}(\Gamma)^2$ are $H^{-s}(\Gamma)$ and ${\bf H}^{-s}_{\times}(\Gamma)$, respectively.

Now, denote by $\tilde{u}$ and $\tilde{\vecv{v}}$ suitable extensions in a neighborhood $\Gamma_{\epsilon}$ of $\Gamma$. We require $\vecv{\tilde{v}}$ to be tangential to surfaces in $\Gamma_{\epsilon}$ parallel to $\Gamma$. With this, we define some surface operators
\begin{itemize}
\item the surface gradient
$$
\vnablag \;u:= \nabla\tilde{u}|_{\Gamma}: H^{1/2}(\Gamma) \mapsto {\bf H}_{\times}^{-1/2}(\Gamma), \label{sym:sgrad}
$$
\item the tangential vector curl
$$
\vecv{curl}_{\Gamma} \;u := \vnablag u\times \pmb{\nu}:H^{1/2}(\Gamma) \mapsto {\bf H}_{\times}^{-1/2}(\Gamma), \label{sym:tcurl}
$$
\item the surface divergence (adjoint of $\vnablag$)
$$
\divg\;\vecv{v} :=(\text{div}\;\tilde{\vecv{v}})|_{\Gamma}: {\bf H}_{\times}^{1/2}(\Gamma)\mapsto H^{-1/2}(\Gamma), \label{sym:sdiv}
$$
\item the surface curl (adjoint of $\vecv{curl}_{\Gamma}$)
$$
\text{curl}_{\Gamma}\; \vecv{v}:= \pmb{\nu}\cdot (\text{curl}\; \tilde{\vecv{v}})|_{\Gamma}: {\bf H}_{\times}^{1/2}(\Gamma)\mapsto H^{-1/2}(\Gamma). \label{sym:scurl}
$$
\end{itemize}
For a function $u$ and a tangent vector field $\vv$, the following identities hold (\cite{nedelec2001acoustic})
\begin{equation}
\curlg \vnablag u= 0 \text{, and } \;  \divg \vcurlg u = 0 
\end{equation}
\begin{equation}
\divg (\vn \times \vv) =  - \curlg \vv \text{, and } \; \curlg (\vn \times \vv) = \divg \vv.
\end{equation}
The Laplace-Beltrami operator acting on a function $u$ is defined by $$\Deltag u:= \divg \vnablag u=  -\curlg \vcurlg u,$$ and we distinguish it from the Hodge Laplacian operator acting on a tangent vector field $\vv$, which is defined by $$\vDeltag \vv:= \vnablag \divg \vv - \vcurlg \curlg \vv.$$
With this, we consider the Hilbert space $\Htdiv$ of surface divergence conforming functions as
\[
\Htdiv: =  \ens{\vv \in {\bf H}_{\times}^{-1/2}(\Gamma)}{ \divg \vv \in H^{-1/2}(\Gamma)},
\]
and its dual space (on smooth domains), which is given by
\[
\Htcurl :=  \ens{\vv \in {\bf H}_{\times}^{-1/2}(\Gamma)}{ \curlg \vv \in  H^{-1/2}(\Gamma)}.
\]

Following \cite{monk2003finite, nedelec2001acoustic}, for a smooth vector function $\vv \in \mathbf{\mathcal{C}}^{\infty}(\overline{\Omega})$, we define the tangential and magnetic trace operators
\[
\tangential : \vv \mapsto   \vvg \times \vn, \mbox{ and }
\magnetic: \vv \mapsto (i\kappa)^{-1}\tangential(\curl \;\vv),  \], and we use superscripts $\pm$ to state when a trace is exterior or interior. In addition, we define the jump and average of traces as 
\[
    \left[\gamma\right]_{\Gamma} := \gamma^{+}-\gamma^{-} \mbox{ and }
    \left\lbrace\gamma\right\rbrace_{\Gamma} := \frac{\gamma^{+}+\gamma^{-}}{2}.
\]

On smooth domains, the tangential trace operator $\tangential$
extends to a continuous and surjective linear map from ${\bf H}(\curl, \Omega)$
onto $\Htdiv$ (see, e.g., \cite[Theorem 1]{Buffa2003}). Moreover, for any 
$\vu, \vv \in {\bf H}(\curl,\Omega)$, the following Green’s identity holds \cite{terrasse1993resolution}.
\[
(\curl\; \vu, \vv)_{\Omega^{-}} - \;(\vu, \curl\; \vv)_{\Omega^{-}} = \langle \tangential(\vu), \tangential(\vv)\rangle_{ \Gamma}.
\]
This sets $\Htdiv$ as its own dual with respect to the pairing $\langle\cdot, \cdot\rangle_{\Gamma}$, which is defined, for all ${\bf j}, {\bf m} \in \Htdiv$, by
\begin{equation}\label{eq:twisted_inner_product}
\dsp \langle{\bf j}, {\bf m}\rangle_{\Gamma} = \int_{\Gamma} ({\bf m }\times\vn)\cdot {\bf j} \cdot  \ d\Gamma.
\end{equation}

\subsection{Electromagnetic potentials and the EFIE}\label{sect:formulations}

One possible way to solve (\ref{Maxwell})-(\ref{Silver-Muller}) on arbitrary smooth geometries is to use an integral
representation of $\vecv{e}^s$ and $\vecv{h}^s$ by some suitable vector fields tangent to $\Gamma$. We start by recalling
the well-known Stratton-Chu formulae \cite{colton2013integral,monk2003finite,nedelec2001acoustic}.

\begin{Theo}
\label{TheoStrattonChu}
The solution $(\vecv{e}^s,\vecv{h}^s)$ to system (\ref{Maxwell})-(\ref{Silver-Muller}) (i.e., the exterior scattering problem) has the following integral representation
\begin{equation}
\label{stratton-chu}
\left\{
\begin{array}{llll}
\vE^{s}(\vx) & =  \dsp -\mathcal{T} \left(\vJext^{s}(\vx)\right) &-\;\; \mathcal{K} \left(\vMext^{s}(\vx)\right), & \vx \in \Omega^+, \\
\vH^{s}(\vx) & =   \;\;\;\dsp \mathcal{K} \left(\vJext^{s}(\vx)\right) &- \;\;\mathcal{T} \left(\vMext^{s}(\vx)\right), & \vx \in \Omega^+.
\end{array}
\right.
\end{equation}
For $\vecv{u} \in \vecv{H}(\curl^{2}, \Omega^{+})$, the electric and magnetic potentials $\mathcal{T}$ and $\mathcal{K}$  are expressed, for $\vx \notin \Gamma$, by
\begin{equation}\label{eq:potentials}
\begin{array}{lll}
\mathcal{T} \vecv{u}(\vx) & = & \dsp \frac{i}{\kappa} \vecv{curl} \;\vecv{curl} \int_{\Gamma} \vecv{G}_{\kappa}(\vx,\vy) \;\vecv{u}(\vy) d\Gamma(\vy) \\
& =  &  \dsp i\kappa\int_{\Gamma} \vecv{G}_{\kappa}(\vx,\vy)\; \vecv{u}(\vy) d\Gamma(\vy) - \frac{1}{i\kappa} {\bf \nabla}_{\Gamma} \int_{\Gamma} \vecv{G}_{\kappa}(\vx,\vy)\; \mathrm{div}_{\Gamma} \vecv{u}(\vy) d\Gamma(\vy) \\
\mathcal{K} \vecv{u}(\vx) & = & \dsp   \vecv{curl}  \int_{\Gamma} \vecv{G}_{\kappa}(\vx,\vy)\; \vecv{u}(\vy) d\Gamma(\vy) = \int_{\Gamma} {\bf \nabla}_{\vy} \vecv{G}_{\kappa}(\vx,\vy) \times \vecv{u}(\vy) d\Gamma(\vy)
\end{array}
\end{equation}
where 
\[
\dsp \vecv{G}_{\kappa}(\vx,\vy)= \frac{\mathrm{exp}(i\kappa |\vx - \vy|)}{4\pi|\vx-\vy|}, \vx \neq \vy, 
\]
is the fundamental solution of the 3D-Helmholtz equation.
\end{Theo}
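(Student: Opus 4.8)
The plan is to derive \eqref{stratton-chu} from a vector Green's identity on the truncated exterior domain, combined with the radiation condition \eqref{Silver-Muller}; I will obtain the representation of $\vEs$ first and then recover $\vHs$ from the Maxwell relation $\vHs=(i\kappa)^{-1}\curl\,\vEs$. The first task is to extract the pointwise consequences of \eqref{Maxwell} that single out $\vecv{G}_{\kappa}$ as the correct kernel. Taking the curl of the second equation and substituting the first gives $\curl\,\curl\,\vEs=\kappa^{2}\vEs$, while taking the divergence of either equation and using $\divop\curl=0$ yields $\divop\vEs=\divop\vHs=0$ in $\Omega^{+}$. Together with $\curl\,\curl=\vnabla\divop-\vDelta$ this shows each Cartesian component of $\vEs$ (and of $\vHs$) solves the scalar Helmholtz equation with wavenumber $\kappa$, which is exactly what licenses $\vecv{G}_{\kappa}$ in the sequel.

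Next, I fix $\vx\in\Omega^{+}$ and a constant vector $\va$, and apply the second vector Green's identity
\[
\int_{D}\big(\vv\cdot\curl\,\curl\,\vu-\vu\cdot\curl\,\curl\,\vv\big)\,dV=\int_{\partial D}\big(\vu\times\curl\,\vv-\vv\times\curl\,\vu\big)\cdot\vn\,dS
\]
with $\vu=\vEs$, $\vv=\vecv{G}_{\kappa}(\vx,\cdot)\,\va$ and $D=(\Omega^{+}\cap B_{R})\setminus\overline{B_{\epsilon}(\vx)}$. Since $\curl\,\curl\,\vu=\kappa^{2}\vu$, the volume integrand collapses to $-\vu\cdot\vnabla(\divop\vv)$, which, after one integration by parts using $\divop\vu=0$, becomes a pure boundary term that merges with the surface integrals. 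The small sphere $\partial B_{\epsilon}(\vx)$ reproduces $\va\cdot\vEs(\vx)$ as $\epsilon\to0$ from the standard $1/(4\pi|\vx-\vy|)$ singularity of $\vecv{G}_{\kappa}$, and letting $\va$ range over a basis yields a vector identity writing $\vEs(\vx)$ as an integral over $\Gamma$ plus a remainder over $\partial B_{R}$.

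The step I expect to be the main obstacle is showing that the contribution of $\partial B_{R}$ vanishes as $R\to\infty$. This is precisely where \eqref{Silver-Muller} enters: one pairs the decay $|\vx|\big(\curl\,\vHs\times\tfrac{\vx}{|\vx|}+\vEs\big)\to0$ against the matching Sommerfeld-type asymptotics of $\vecv{G}_{\kappa}$ and $\vnabla\vecv{G}_{\kappa}$, so that the leading $O(R^{-1})$ parts of kernel and field cancel and what remains is $o(R^{-2})$ over a surface of area $O(R^{2})$. Turning this formal cancellation into a rigorous limit is the delicate point; it typically requires an auxiliary Rellich-type estimate to exclude slowly decaying boundary contributions. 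The smoothness of $\Gamma$ assumed here provides the trace regularity needed for these estimates to go through.

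Finally, I would identify the surviving surface integral with the stated potentials. Rewriting the two boundary terms in terms of the exterior traces $\vMext^{s}=\tangential^{+}\vEs$ and $\vJext^{s}=\magnetic^{+}\vEs$, and transferring the volume $\curl\,\curl$ onto $\Gamma$ with the surface integration-by-parts identities recorded before the theorem (in particular $\divg(\vn\times\vv)=-\curlg\vv$), recovers both the compact $\tfrac{i}{\kappa}\curl\,\curl$ form of $\mathcal{T}$ and its expanded $i\kappa\int_{\Gamma}\vecv{G}_{\kappa}-\tfrac{1}{i\kappa}\vnablag\int_{\Gamma}\vecv{G}_{\kappa}\,\divg$ form, together with $\mathcal{K}$, giving the first line of \eqref{stratton-chu}. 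The second line then follows by applying $\curl$ and using $\vHs=(i\kappa)^{-1}\curl\,\vEs$ together with the intertwining relations $\curl\,\mathcal{T}\propto\mathcal{K}$ and $\curl\,\mathcal{K}\propto\mathcal{T}$ (consequences of $\curl\,\curl\,\curl=\kappa^{2}\curl$ applied to $\int_{\Gamma}\vecv{G}_{\kappa}(\cdot)$); the signs and the outward orientation of $\vn$ relative to $\Omega^{-}$ fix the precise form stated.
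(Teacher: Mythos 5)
The paper does not prove this theorem at all: it is recalled as the classical Stratton--Chu representation with citations to \cite{colton2013integral,monk2003finite,nedelec2001acoustic}, and your sketch is precisely the standard derivation found in those references (vector Green's identity with the kernel $\vecv{G}_{\kappa}\va$, excision of a small ball around $\vx$, a Rellich-type argument to kill the $\partial B_R$ contribution via the Silver--M\"uller condition, and identification of the surviving traces with $\mathcal{T}$ and $\mathcal{K}$). Your outline is correct and consistent with the cited proofs, so there is nothing to compare against beyond noting that the paper itself supplies no argument.
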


The integral operators $\mathcal{T}$ and $\mathcal{K}$ are called Maxwell single- and double-layer potentials. 
To obtain integral equations on $\Gamma$, we take tangential and magnetic traces of the potentials $\mathcal{T}$ 
and $\mathcal{K}$, from which the next result follows \cite{colton2013integral,monk2003finite}.

\begin{Lem}
\label{Lemtrace}
The exterior (+) tangential traces of the potentials $\mathcal{T}$ and $\mathcal{K}$ on $\Gamma$ are given by
\begin{equation}\label{eq:ids_2}
\begin{aligned}
\tangential^{+}\mathcal{T}&= \SLO, && \tangential^{+}\mathcal{K}&= -\dfrac{\vecv{I}}{2}+\DLO.
\end{aligned}
\end{equation}
$\SLO, \DLO :  \Htdiv   \rightarrow  \Htcurl$ are the elementary electric and magnetic boundary integral operators, where $\vecv{I}$ is the identity operator.
\end{Lem}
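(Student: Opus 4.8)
The plan is to reduce the vector Maxwell potentials to the classical scalar layer potentials, for which the jump relations across $\Gamma$ are well known, and then to transport these relations through the tangential trace operator $\tangential$. Throughout, I would first establish the identities for smooth densities $\vecv{u}\in\vecv{H}^{s}_{\times}(\Gamma)$ with $s$ large, and then extend them to $\Htdiv$ using the density of smooth tangential fields together with the continuity of the operators involved; this density step is what makes the statement valid in the energy space rather than only at the level of classical (pointwise) potentials.

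For the single-layer potential $\mathcal{T}$, I would use the second representation in \eqref{eq:potentials}, which exhibits $\mathcal{T}\vecv{u}$ as $i\kappa$ times the componentwise scalar single-layer potential of $\vecv{u}$, minus $\tfrac{1}{i\kappa}\nabla_{\Gamma}$ applied to the scalar single-layer potential of $\divg\vecv{u}$. The scalar single-layer potential is continuous across $\Gamma$, and the surface gradient $\nabla_{\Gamma}$ only differentiates its (continuous) boundary trace tangentially; hence both contributions have continuous tangential traces. Consequently $[\tangential\mathcal{T}]_{\Gamma}=0$, so the interior and exterior traces coincide and equal the weakly singular boundary operator $\SLO$, which proves the first identity with no identity term.

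For the magnetic potential, I would write $\mathcal{K}\vecv{u}=\vcurl\,\Psi_{\kappa}\vecv{u}$, with $\Psi_{\kappa}$ the vector single-layer potential, so that the jump originates from the curl, which mixes in the normal derivative of the single layer. Expanding $\tangential(\vcurl\,\Psi_{\kappa}\vecv{u})=(\vcurl\,\Psi_{\kappa}\vecv{u})\times\vn$ and separating the tangential derivatives (which pass continuously to the trace) from the normal derivative, the latter carries the classical scalar jump $[\partial_{n}\Psi_{\kappa}\vecv{u}]_{\Gamma}=-\vecv{u}$. Tracking signs through the vector identity relating $\vn\times\vcurl$ to the surface operators, I would obtain $[\tangential\mathcal{K}]_{\Gamma}=-\vecv{I}$ together with an average equal to the Cauchy principal-value operator $\DLO$, which yields $\tangential^{+}\mathcal{K}=-\tfrac{\vecv{I}}{2}+\DLO$ for the exterior trace, the sign being fixed by the convention $\tangential\vecv{v}=\vecv{v}_{|\Gamma}\times\vn$ with $\vn$ outward.

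The main obstacle is twofold. First, the tangential-trace-of-curl computation for $\mathcal{K}$ must be carried out carefully: one has to isolate exactly the normal-derivative contribution producing the jump while showing that the remaining tangential terms are continuous, which requires the vector identities expressing $\vn\times\vcurl$ through the surface differential operators. Second, and more delicate in the energy setting, is the claimed mapping property $\SLO,\DLO:\Htdiv\to\Htcurl$: this rests on the smoothing of the scalar single layer ($\OV{\kappa}:H^{-1/2}(\Gamma)\to H^{1/2}(\Gamma)$), the continuity of the surface operators, and the trace theorem identifying the range of $\tangential$ on $\vecv{H}(\curl,\Omega^{\pm})$. Showing that the principal-value integrals are well defined and continuous on $\Htdiv$, rather than only on smooth densities, is precisely where the density argument of the first paragraph becomes essential.
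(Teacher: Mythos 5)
The paper does not actually prove Lemma~\ref{Lemtrace}: it is stated as a known result and attributed to the literature (Colton--Kress, Monk), so there is no internal argument to compare yours against. Your sketch reconstructs the standard proof from those references correctly: the absence of a jump for $\tangential\mathcal{T}$ follows because both terms of the second representation in \eqref{eq:potentials} reduce, after applying $(\cdot)\times\vn$, to tangential derivatives of the (continuous) trace of scalar single-layer potentials --- note that off the surface the gradient in that formula is the full gradient, and it is precisely the normal component $(\partial_{n}\cdot)\vn$, the only discontinuous part, that is annihilated by $\times\,\vn$; and the jump $[\tangential\mathcal{K}]_{\Gamma}=-\vecv{I}$ with average $\DLO$ comes from the normal-derivative jump of the scalar single layer hidden inside $\vcurl$, consistent with the sign convention $\tangential\vv=\vv_{|\Gamma}\times\vn$ and the stated $\tangential^{+}\mathcal{K}=-\tfrac{\vecv{I}}{2}+\DLO$. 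Your closing remarks are also the right ones: the density-plus-continuity extension is the classical route, while the mapping property $\SLO,\DLO:\Htdiv\rightarrow\Htcurl$ (which, with this rotated trace convention, is where the operators naturally land) rests on the $H^{-1/2}(\Gamma)\to H^{1/2}(\Gamma)$ smoothing of the scalar single-layer boundary operator combined with the surface differential calculus; modern treatments (Buffa--Costabel--Sheen, Buffa--Hiptmair) obtain the continuity on $\Htdiv$ directly rather than by density, but either path is legitimate. No gap.
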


Taking the traces of the first representation formula in \eqref{stratton-chu}, we arrive at the exterior Calder\'on projector such that
\begin{equation}\label{eq:calderon_projector}
\begin{bmatrix}
\frac{\vecv{I}}{2}- \DLO  & - \SLO\\
\ \SLO& \frac{\vecv{I}}{2} - \DLO \\
\end{bmatrix} \begin{bmatrix}
\tangential^{+}\vecv{e}^{s}\\
\magnetic^{+}\vecv{e}^{s}\\
\end{bmatrix} 
 = \begin{bmatrix}
\tangential^{+}\vecv{e}^{s}\\
\magnetic^{+}\vecv{e}^{s}\\
\end{bmatrix},
\end{equation}

\noindent From the PEC boundary condition \eqref{eq:conducting} we have that $\tangential^+\vecv{e}^{s} = -\tangential^+\vecv{e}^{\text{i}}$, and $\vecv{e}^{i}$ is known, so by taking the first row of system~\eqref{eq:calderon_projector} we get the direct EFIE
\begin{equation}\label{eq:direct_efie}
    \SLO\magnetic^{+}\vecv{e}^{s} = \left(\DLO + \frac{\vecv{I}}{2} \right)\tangential^{+}\vecv{e}^{\text{i}},
\end{equation}
\noindent which is the equation we intend to precondition. It provides a map from the electric trace $\tangential^{+}\vecv{e}^{s}$ to the magnetic trace $\magnetic^{+}\vecv{e}^{s}$. Therefore, we can derive an expression for the Electric-to-Magnetic (EtM) map in terms of elementary boundary integral operators: 
\begin{equation}
\label{EtM}
\pmb\Lambda = - \SLO^{-1}\left(\frac{\vecv{I}}{2} + \DLO\right).
\end{equation}
In the same way, the inverse of this operator gives us a map from $\magnetic^{+}\vecv{e}^{s}$ to $\tangential^{+}\vecv{e}^{s}$, namely the Magnetic-to-Electric (MtE) operator
\begin{equation}
\label{MtE}
\vecv{V} = - \left(\frac{\vecv{I}}{2} + \DLO\right)^{-1} \SLO. 
\end{equation}

\begin{Rem}

Notice, however, that from the second row of the Calder\'on projector, we have the opposite relations:
\begin{equation}
\pmb\Lambda^{*} =  \left(\frac{\vecv{I}}{2} + \DLO\right)^{-1} \SLO. 
\end{equation}
and 
\begin{equation}
\vecv{V}^{*} =  \SLO^{-1}\left(\frac{\vecv{I}}{2} + \DLO\right),
\end{equation}
but for simplicity, we stick to the analysis of $\pmb\Lambda$ and $\vecv{V}$.

Besides this, we must mention an alternative formulation, the indirect EFIE: \cite{Buffa2003}
\begin{equation}\label{eq:indirect_efie}
    \SLO\left[\magnetic\right]_{\Gamma}\vecv{e}^{s} =\tangential^{+}\vecv{e}^{\text{i}},
\end{equation}
whose analysis is not necessary if we only intend to model closed, piecewise Lipschitz geometries, so we have not studied its Magnetic-to-Electric or Electric-to-Magnetic map.

\end{Rem}

These operators have proven to be good candidates as preconditioners. In fact, in
\cite{BetckeFierroPiccardo,FierroPiccardoPhD} we find $\SLO\pmb\Lambda=-\left(\frac{\vecv{I}}{2} + \DLO\right)$
and $\vecv{V}\SLO=-\left(\frac{\vecv{I}}{2}-\DLO\right)$. To complement, Proposition \ref{prop:mte_etm_flipped} 
in the Appendix shows that $\pmb\Lambda\SLO=-\left(\frac{\vecv{I}}{2} + \DLO\right)$ and $\SLO\vecv{V}=-\left(\frac{\vecv{I}}{2}-\DLO\right)$.

As we will demonstrate, the choice between left and right preconditioning can significantly influence the effectiveness and limitations of the resulting discrete implementation, depending on the underlying integral equation. For clarity and focus, this work presents the key ideas necessary for implementing well-posed formulations of left preconditioners. When appropriate, we also outline how these concepts can be adapted to construct corresponding right preconditioners, offering guidance for the interested reader.

Regardless of which operator to use as a preconditioner, either $\pmb\Lambda$ or $\vecv{V}$, it is numerically too expensive to build any of them, since both require solving a dense linear system. Instead, we use local approximations on a general surface $\Gamma$ in the spirit of the OSRC method \cite{el2014approximate, FierroPiccardoPhD, BetckeFierroPiccardo}. Then, we construct the corresponding OSRC-preconditioned EFIEs. In the following section, we present 
said formulations and prove their well-posedness for any wavenumber $\kappa$.

\section{OSRC-preconditioned EFIEs}\label{sect:theoretic_results}

\subsection{Approximation of the EtM and MtE operators}

The EtM operator, $\pmb\Lambda :  \Htdiv \rightarrow \Htdiv$, should satisfy the following

\begin{equation}\label{eq:etm_map}
 \pmb\Lambda \tangential^+ \vecv{e}^s = \magnetic^+ \vecv{e}^s, \ \textrm{ on } \Gamma.
\end{equation}

An approximation of $\pmb\Lambda$ is obtained in \cite{darbas2004preconditionneurs, el2014approximate} and is expressed by

\begin{equation}
\label{eq:EtM_1st_app}
\pmb\Lambda_{\varepsilon} = \underbrace{\left(\vecv{I}+\frac{\vDeltag}{\kappa^2_{\varepsilon}}\right)^{-1/2}}_{\pmb\Lambda_{1,\varepsilon}^{-1}}\underbrace{\left(\vecv{I}-\frac{1}{\kappa_{\varepsilon}^2}\vcurlg\scurl\right)}_{\pmb\Lambda_{2,\varepsilon}}(\pmb\nu\times\cdot)
\end{equation}

The wavenumber $\kappa$ has been replaced by $\kappa_{\varepsilon} = \kappa + i\varepsilon$, $\varepsilon \neq 0$, in order to 
avoid singularities when $\dfrac{\vDeltag}{\kappa^{2}} \approx -\vecv{I}$. An optimal choice for the damping parameter is
$\varepsilon = 0.39 \kappa^{1/3}\left(\mathcal{H}\right)^{2/3}$ where $\mathcal{H}$ is the mean curvature of the boundary
\cite{el2014approximate}.

From $\pmb\Lambda_{\varepsilon}^{-1} =\vecv{V}_{\varepsilon}(\cdot\times \pmb\nu)$
we can extract the operator $\vecv{V}_{\varepsilon}$ defined as

\begin{equation}
\label{eq:MtE_1st_app}
  \vecv{V}_{\varepsilon} :=\;\pmb\Lambda_{2,\varepsilon}^{-1}\pmb\Lambda_{1,\varepsilon}.
\end{equation}

Notice that on closed, smooth surfaces, for $\varphi \in \Htdiv$, $\pmb\nu\times \varphi \in \Htcurl$, and for
$\psi \in \Htcurl$, $\pmb\nu\times \psi \in \Htdiv$. Under the same circumstances, 
$\pmb\Lambda_{1,\varepsilon}^{-1}\pmb\Lambda_{2,\varepsilon}\varphi \in \Htdiv$ and 
$\pmb\Lambda_{1,\varepsilon}^{-1}\pmb\Lambda_{2,\varepsilon}\psi \in \Htcurl$.
Consequently, $\psi \in \Htcurl$, $\pmb\Lambda_{2,\varepsilon}^{-1}\pmb\Lambda_{1,\varepsilon}\varphi \in \Htdiv$ and 
$\pmb\Lambda_{2,\varepsilon}^{-1}\pmb\Lambda_{1,\varepsilon}\psi \in \Htcurl$. As a result, we have two different 
left preconditioning strategies: 

\begin{align*}
\pmb\Lambda_{\varepsilon} \SLO&:\Htdiv\rightarrow \Htdiv, \text{ and }\\
\vecv{V}_{\varepsilon} \vecv{T}_{k}&:\Htdiv \rightarrow \Htdiv.
\end{align*}

where 

\begin{equation}
    \vecv{T}_{\kappa}=i\kappa \pmb\Psi_{\kappa}-(i\kappa)^{-1} \sgrad \circ \Psi_{\kappa} \circ \divg : \Htdiv\rightarrow \Htdiv,\label{eq:efie_decomp},
\end{equation}

\noindent and

\begin{align}
    \Psi_{\kappa}(\phi)(\vecv{x})= \gamma^{+} \int_{\Gamma} \mathbf{G}(\mathbf{x}, \mathbf{y})  \phi(\mathbf{y})d \Gamma(\mathbf{y}), \text{ and } \label{eq:sl_op}\\
    \pmb\Psi_{\kappa} (\boldsymbol{\mu})(\vecv{x})= \left[\begin{array}{cc}
        \Psi_{\kappa}(\mu_{1})(\vecv{x})  & 0\\
       0 & \Psi_{\kappa}(\mu_{2})(\vecv{x})
       \end{array}\right].\label{eq:vec_sl_op}
\end{align}

These produce the respective preconditioned formulations:

\begin{equation}
\label{eq:EFIELambda}
\pmb\Lambda_{\varepsilon} \SLO \magnetic^+ \vecv{e}^s = - \pmb\Lambda_{\varepsilon}\left(\frac{\vecv{I}}{2} + \DLO\right)\tangential^{+}\vecv{e}^{i}
\end{equation}
and 
\begin{equation}
\label{eq:EFIEV}
\vecv{V}_{\varepsilon} \vecv{T}_{k} \magnetic^+ \vecv{e}^s =  \pmb\nu\times\vecv{V}_{\varepsilon}\SLO\magnetic^+ \vecv{e}^s= - \pmb\nu\times\vecv{V}_{\varepsilon}\left(\frac{\vecv{I}}{2} + \DLO\right)\tangential^{+}\vecv{e}^{i}.
\end{equation}

Having these formulations, in the next section, we prove that the OSRC-preconditioned EFIEs have unique solutions. 

\subsection{Existence and uniqueness results}
\label{posedness}

A preliminary result consists of computing the principal symbol of the boundary integral operator $\SLO$ \cite{ChenZhou92, chen2010boundary}, which is a pseudodifferential operator of order $-1$. The following proposition holds. 

\begin{proposition}
Let $\boldsymbol{\xi}=(\xi_2, \xi_3)$ be the dual variable of $\vx$  by the Fourier transform for $\vx$
restricted to $\Gamma$. If $\kappa$ is not an interior resonance, the principal symbol of the operator 
$\vecv{T}_{\kappa} = \gamma^{+}\mathcal{T}$, denoted by $\sigma_{\vecv{T}_{\kappa}}(\pmb\xi)$, is given by
	
\begin{align}\label{eq:preconditioned_el_pot}
    \sigma_{\vecv{T}_{\kappa}}(\pmb\xi)&= \frac{1}{2i\kappa\sqrt{\|\boldsymbol{\xi}\|^{2}-\kappa^{2}}}\left(\kappa^2 \mathbb{I} - \cct \right),
\end{align}

\noindent Consequently, the principal symbol of $\SLO$ is

\begin{align}\label{eq:preconditioned_efio}
    \sigma_{\SLO}(\pmb\xi)&= \sigma_{\vecv{T}_{\kappa}}(\pmb\xi)\mathbb{J},
\end{align}

\noindent where  $\|\boldsymbol{\xi}\|:=\sqrt{\boldsymbol{\xi} \cdot \boldsymbol{\xi}}$, $\cct=\left[\begin{array}{cc}
    \xi_{3}^{2} & -\xi_{2} \xi_{3} \\
    -\xi_{2} \xi_{3} & \xi_{2}^{2}
   \end{array}\right]$, $\mathbb{J}=\left[\begin{array}{cc}
    0  & 1\\
   -1 & 0
   \end{array}\right]$, and $\mathbb{I}$ is the tangent plane identity operator.

\end{proposition}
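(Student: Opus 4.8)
The plan is to compute the symbol by the standard microlocal reduction: freeze the geometry at a point of $\Gamma$, replace a neighbourhood by its tangent plane (so that locally $\Gamma=\{x_1=0\}$ with $\vn=\mathbf{e}_1$), and read off the tangential Fourier multipliers in the dual variable $\boldsymbol{\xi}=(\xi_2,\xi_3)$. By the pseudodifferential calculus, the curvature terms produced by this flattening are of lower order and do not affect the principal symbol; this is where the smoothness of $\Gamma$ enters, while the assumption that $\kappa$ is not an interior resonance guarantees that the exterior trace $\gamma^{+}$ of the potential is well defined and fixes the branch of the square root appearing below. I would also state at the outset the convention being used: since $\vecv{T}_{\kappa}$ in \eqref{eq:efie_decomp} is the sum of a term of order $-1$ and a term of order $+1$, I keep the full leading matrix symbol that retains the $\kappa$-dependence, as is standard in the OSRC microlocal analysis.

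First I would establish the scalar building block. Using the Weyl--Sommerfeld plane-wave representation of $\mathbf{G}_{\kappa}$ and restricting both arguments to $\{x_1=0\}$, the scalar single layer $\Psi_{\kappa}$ of \eqref{eq:sl_op} becomes a tangential convolution whose Fourier multiplier is $\tfrac12(\|\boldsymbol{\xi}\|^{2}-\kappa^{2})^{-1/2}$, with the branch selected by the radiation condition; hence $\sigma_{\pmb\Psi_{\kappa}}=\sigma_{\Psi_{\kappa}}\mathbb{I}$. Next come the differential factors: $\sgrad\leftrightarrow i\boldsymbol{\xi}$ (a column) and $\divg\leftrightarrow i\boldsymbol{\xi}^{\top}$ (a row), so that $\sgrad\circ\Psi_{\kappa}\circ\divg$ has symbol $-\sigma_{\Psi_{\kappa}}\,\boldsymbol{\xi}\boldsymbol{\xi}^{\top}$, whereas the surface curls carry the $90^{\circ}$ rotation $\mathbb{J}$, giving $\vcurlg\scurl\leftrightarrow-(\mathbb{J}\boldsymbol{\xi})(\mathbb{J}\boldsymbol{\xi})^{\top}=-\cct$ together with the algebraic identity $\cct=\|\boldsymbol{\xi}\|^{2}\mathbb{I}-\boldsymbol{\xi}\boldsymbol{\xi}^{\top}$, which I will use repeatedly.

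I would then combine these through the decomposition \eqref{eq:efie_decomp}: symbols of compositions multiply and the two summands add at leading order, so that $\sigma_{\vecv{T}_{\kappa}}=\sigma_{\Psi_{\kappa}}\bigl(i\kappa\,\mathbb{I}+(i\kappa)^{-1}\boldsymbol{\xi}\boldsymbol{\xi}^{\top}\bigr)=\tfrac{\sigma_{\Psi_{\kappa}}}{i\kappa}\bigl((i\kappa)^{2}\mathbb{I}+\boldsymbol{\xi}\boldsymbol{\xi}^{\top}\bigr)$, which collapses to $\tfrac{1}{2i\kappa\sqrt{\|\boldsymbol{\xi}\|^{2}-\kappa^{2}}}\,(\kappa^{2}\mathbb{I}-\cct)$ once the trace convention and the branch sign are fixed. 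A cleaner, more robust route avoids the decomposition altogether and traces the curl-curl form $\mathcal{T}=\tfrac{i}{\kappa}\vcurl\vcurl(\text{single layer})$ directly: in the flat model each Cartesian component of the single layer satisfies $(\Delta+\kappa^{2})=0$, so on the trace $\partial_1^{2}\leftrightarrow(\|\boldsymbol{\xi}\|^{2}-\kappa^{2})$; expanding $\vcurl\vcurl$ and retaining the two tangential rows then produces the matrix $\kappa^{2}\mathbb{I}-\boldsymbol{\xi}\boldsymbol{\xi}^{\top}$ times $\sigma_{\Psi_{\kappa}}$ with essentially no algebra. Finally, I pass from $\vecv{T}_{\kappa}$ to $\SLO$ via the tangential trace $\tangential=(\cdot)\times\vn$, which in the tangent plane is left multiplication by $\mathbb{J}$; using $\mathbb{J}\boldsymbol{\xi}\boldsymbol{\xi}^{\top}=\cct\,\mathbb{J}$ this reorganizes the outcome precisely as $\sigma_{\SLO}=\sigma_{\vecv{T}_{\kappa}}\mathbb{J}$.

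The main obstacle is not any single computation but the bookkeeping of the rotation $\mathbb{J}$ induced by the tangential trace and by the twisted $\Htdiv$ self-duality \eqref{eq:twisted_inner_product}, combined with the choice of branch of $\sqrt{\|\boldsymbol{\xi}\|^{2}-\kappa^{2}}$ enforced by the radiation condition. Together these two conventions decide whether the transverse projector surfaces as $\boldsymbol{\xi}\boldsymbol{\xi}^{\top}$ or as $\cct$, and they fix the sign of the scalar prefactor $1/(2i\kappa)$; getting them consistent with the $\Htdiv\to\Htdiv$ mapping convention used for $\vecv{T}_{\kappa}$ is the delicate point. A secondary technical step is verifying that the curvature contributions discarded in the flattening are genuinely of strictly lower order, so that the flat-model multipliers really are the principal symbols claimed.
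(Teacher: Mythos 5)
Your proposal follows essentially the same route as the paper's proof: take $\sigma_{\Psi_{\kappa}}(\pmb\xi)=\tfrac12(\|\pmb\xi\|^{2}-\kappa^{2})^{-1/2}$, read off the Fourier multipliers of $\sgrad$ and $\divg$, combine them through the splitting \eqref{eq:efie_decomp}, and pass to $\SLO$ by the rotation $\mathbb{J}$ using $\mathbb{J}\,\boldsymbol{\xi}\boldsymbol{\xi}^{T}=\cct\,\mathbb{J}$. What you add --- the explicit tangent-plane flattening with the remark that curvature corrections are of lower order, the Weyl--Sommerfeld derivation of $\sigma_{\Psi_{\kappa}}$ rather than a citation, and the alternative computation that traces $\tfrac{i}{\kappa}\vcurl\,\vcurl$ directly using $\partial_{1}^{2}\leftrightarrow\|\pmb\xi\|^{2}-\kappa^{2}$ --- is sound and more self-contained than the paper's argument. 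One step is glossed over, however: from $\sigma_{\Psi_{\kappa}}\bigl(i\kappa\,\mathbb{I}+(i\kappa)^{-1}\boldsymbol{\xi}\boldsymbol{\xi}^{T}\bigr)$ you obtain a matrix proportional to $\kappa^{2}\mathbb{I}-\boldsymbol{\xi}\boldsymbol{\xi}^{T}$, and no choice of branch or trace convention converts $\boldsymbol{\xi}\boldsymbol{\xi}^{T}$ into $\cct=\|\pmb\xi\|^{2}\mathbb{I}-\boldsymbol{\xi}\boldsymbol{\xi}^{T}$; the matrix $\cct$ should appear only after conjugation by $\mathbb{J}$ in the passage from $\vecv{T}_{\kappa}$ to $\SLO$, exactly as in the paper's final display. (To be fair, the paper's own intermediate result $\sigma_{\vecv{T}_{\kappa}}\propto\kappa^{2}\mathbb{I}+\boldsymbol{\xi}\boldsymbol{\xi}^{T}$ also does not match the stated \eqref{eq:preconditioned_el_pot}, and your sign on the $\boldsymbol{\xi}\boldsymbol{\xi}^{T}$ term differs from the paper's because you keep the factors $i$ in the symbols of $\sgrad$ and $\divg$ while the paper drops them.) You should make the $\boldsymbol{\xi}\boldsymbol{\xi}^{T}$-versus-$\cct$ bookkeeping explicit rather than deferring it to ``conventions,'' since that is precisely the point where the statement and the computation must be reconciled.
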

\begin{proof}

We start by calculating the principal symbol of $\vecv{T}_{\kappa}$. For this, we take the principal symbol of $\Psi_{\kappa}$ is
$\sigma_{\Psi_{\kappa}}(\pmb\xi) = \dfrac{1}{2\sqrt{\|\pmb{\xi}\|^{2}-\kappa^{2}}}$\footnote{\cite{chen2010boundary}
presents techniques to compute this symbol and \cite[Appendix]{boubendir2013wave} proves that $\sigma_{\Psi_{i\kappa}}(\pmb\xi) = \dfrac{1}{2\sqrt{\|\pmb{\xi}\|^{2}+\kappa^{2}}}$.}.
Therefore, using the relations
\begin{equation*}
    \four{\sgrad \,\cdot} = (\xi_2, \xi_3)^{T},\mbox{ and } \four{\divg \,\cdot} = (\xi_2, \xi_3),
\end{equation*}

\noindent (where $\mathcal{F}(\cdot)$ is the Fourier transform application) the principal symbol of $\sgrad \circ \pmb\Psi_{\kappa} \circ \divg$ is given by 
\begin{align*}
\sigma_{\sgrad \circ \pmb\Psi_{\kappa} \circ \divg}(\pmb\xi) &=\frac{(\xi_2, \xi_3)^{T}(\xi_2, \xi_3)}{2\sqrt{\|\pmb{\xi}\|^{2}-\kappa^{2}}}\\
&=\frac{\left[\begin{array}{cc}
 \xi_{2}^2 & \xi_{2}\xi_{3}\\
 \xi_{2}\xi_{3} & \xi_{3}^2 
\end{array}\right]}{2\sqrt{\|\pmb{\xi}\|^{2}-\kappa^{2}}}\\
&= \frac{\vecv{gd}_{\pmb\xi, \Gamma}}{2\sqrt{\|\pmb{\xi}\|^{2}-\kappa^{2}}}.
\end{align*}

\noindent From \eqref{eq:efie_decomp}, we deduce

\begin{align*}
    \sigma_{\vecv{T}_{\kappa}}(\pmb\xi) &=\frac{i(\kappa^2\mathbb{I} + \vecv{gd}_{\pmb\xi, \Gamma})}{2\kappa\sqrt{\|\pmb{\xi}\|^{2}-\kappa^{2}}}\\
\end{align*}

And $\SLO = \vecv{T}_{\kappa}\times \pmb\nu$, so $\sigma_{\SLO}(\pmb\xi) = \mathbb{J}\sigma_{\vecv{T}_{\kappa}}(\pmb\xi)$, where $\mathbb{J} = \left[\begin{array}{cc}
    0  & 1\\
   -1 & 0
   \end{array}\right]$ is a rotation on the tangent of the scatterer, with the property $\mathbb{J}^2 = -\mathbb{I}$. 

The principal symbol of the differential operator $\vcurlg \curlg$ is $\cct=\left[\begin{array}{cc}
    \xi_{3}^{2} & -\xi_{2} \xi_{3} \\
    -\xi_{2} \xi_{3} & \xi_{2}^{2}
   \end{array}\right]$ and we have $\mathbb{J} \vecv{gd}_{\pmb\xi, \Gamma} = \vecv{cc}_{\pmb\xi, \Gamma}\mathbb{J}$, so we can rewrite it as 

   \begin{align*}
    \sigma_{\SLO}(\pmb\xi) &=\frac{i(\kappa^2\mathbb{I} + \vecv{cc}_{\pmb\xi, \Gamma})}{2\kappa\sqrt{\|\pmb{\xi}\|^{2}-\kappa^{2}}}\mathbb{J}\\
\end{align*}

\end{proof}

\noindent Now we can show the following lemma.

\begin{Lem}\label{lemma:EtM_efie}

Let $\Gamma$ be a smooth and simply connected surface where $\left|\delta/\kappa^2\right|<1$ for $\delta = 2i\varepsilon\kappa - \varepsilon^2$ 
($\kappa > 1.074\mathcal{H}$ if $\kappa \in \mathbb{R}$). Assuming that $\kappa^2$ is not an interior resonance, then $\vecv{V}_{\varepsilon}\vecv{T}_{\kappa}$ is a compact perturbation of the identity operator (Fredholm of second kind).

\end{Lem}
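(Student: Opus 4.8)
The plan is to work entirely within the calculus of classical pseudodifferential operators on the closed smooth surface $\Gamma$, regarding $\vecv{V}_{\varepsilon}$, $\vecv{T}_{\kappa}$ and $\SLO$ as such operators, and to prove that $\vecv{V}_{\varepsilon}\vecv{T}_{\kappa}$ is an operator of order $0$ whose principal symbol is a nonzero scalar multiple $c\,\mathbb{I}$ of the tangent-plane identity. Once this is done, $\vecv{V}_{\varepsilon}\vecv{T}_{\kappa}-c\vecv{I}$ is a pseudodifferential operator of strictly negative order; on the compact manifold $\Gamma$ such operators are smoothing relative to the Sobolev scale and hence compact on $\Htdiv$ by Rellich's theorem, which yields the second-kind Fredholm structure after the harmless rescaling by $c\neq 0$.

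First I would take the principal symbol $\sigma_{\vecv{T}_{\kappa}}$ from the preceding Proposition and rewrite it in the surface Helmholtz--Hodge frame aligned with $\boldsymbol{\xi}$ and $\boldsymbol{\xi}^{\perp}$. In this frame the matrices $\gdt$ and $\cct$ are $\|\boldsymbol{\xi}\|^{2}$ times the projectors onto the surface-gradient and surface-curl parts, and I would use the relations $\gdt+\cct=\|\boldsymbol{\xi}\|^{2}\mathbb{I}$ and $\mathbb{J}\gdt=\cct\mathbb{J}$ repeatedly. Simple-connectedness of $\Gamma$ guarantees that this splitting is a complete decomposition of $\Htdiv$ with no harmonic fields, so that the symbols are simultaneously diagonalised. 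One then reads off that $\vecv{T}_{\kappa}$ is diagonal with \emph{split order}: order $+1$ on the gradient component (the contribution of $\sgrad\circ\Psi_{\kappa}\circ\divg$) and order $-1$ on the curl component.

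Next I would compute the symbol of $\vecv{V}_{\varepsilon}=\pmb\Lambda_{2,\varepsilon}^{-1}\pmb\Lambda_{1,\varepsilon}$, carefully tracking the rotations $\pmb\nu\times$ that enter through \eqref{eq:EFIEV}. Since $\vDeltag$ is elliptic with principal symbol $-\|\boldsymbol{\xi}\|^{2}\mathbb{I}$ and $\vcurlg\scurl$ has principal symbol $\cct$ (so that it annihilates gradients), Seeley's theorem on complex powers of elliptic operators legitimises assigning $\pmb\Lambda_{1,\varepsilon}=(\vecv{I}+\vDeltag/\kappa_{\varepsilon}^{2})^{1/2}$ a scalar symbol of order $+1$ and $\pmb\Lambda_{2,\varepsilon}^{-1}$ a symbol that is order $0$ on gradients and order $-2$ on curls; the complex shift $\kappa_{\varepsilon}=\kappa+i\varepsilon$, through the hypothesis $|\delta/\kappa^{2}|<1$, keeps both operators elliptic with spectrum off the branch cut, so that these powers are well defined. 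After the rotations, $\vecv{V}_{\varepsilon}$ is diagonal in the same frame but with the \emph{opposite} split order ($-1$ on gradients, $+1$ on curls), precisely inverting the order structure of $\vecv{T}_{\kappa}$. The crux is then the scalar collapse of the product: on each component the leading factor involves $\lambda=\sqrt{\|\boldsymbol{\xi}\|^{2}-\kappa^{2}}$ and $\lambda_{\varepsilon}=\sqrt{\|\boldsymbol{\xi}\|^{2}-\kappa_{\varepsilon}^{2}}$ in reciprocal combinations, and since $\lambda/\lambda_{\varepsilon}\to 1$ and $\kappa/\kappa_{\varepsilon}\to 1$ as $\|\boldsymbol{\xi}\|\to\infty$, the two diagonal entries converge to one and the same constant, giving $\sigma_{\vecv{V}_{\varepsilon}\vecv{T}_{\kappa}}=c\,\mathbb{I}$.

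The hard part will be the symbol bookkeeping rather than the final compactness step. Two points deserve care: justifying that the square root and the inverse of the OSRC operators are classical pseudodifferential operators with exactly the stated principal symbols (where Seeley's calculus together with the non-resonance and damping hypotheses is used to guarantee ellipticity and invertibility), and tracking the intertwining rotations $\pmb\nu\times$ correctly, since it is only once they are accounted for that the split orders of $\vecv{V}_{\varepsilon}$ and $\vecv{T}_{\kappa}$ become opposite and the leading symbols cancel to a scalar. The assumption that $\kappa^{2}$ is not an interior resonance is what makes $\SLO$ invertible so that the symbolic identity of the Proposition applies, while simple-connectedness removes the finite-dimensional space of surface-harmonic fields that would otherwise need separate treatment — though, being finite rank, it would only add a further compact term and would not affect the conclusion.
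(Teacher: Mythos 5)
Your overall architecture matches the paper's: both arguments reduce the claim to showing that $\vecv{V}_{\varepsilon}\vecv{T}_{\kappa}$ is a zeroth-order pseudodifferential operator with invertible principal symbol, the remainder being compact because it has negative order. Your bookkeeping of the split orders and of the rotations $\pmb\nu\times\cdot$ is careful, and replacing the paper's explicit Taylor/Neumann expansions and its appeal to Shubin's Theorem 6.4 by the standard fact that negative-order classical pseudodifferential operators on a compact surface are compact is a legitimate simplification.

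There is, however, a genuine error at the crux: the claim that the two diagonal entries of the limiting symbol ``converge to one and the same constant'', so that $\sigma_{\vecv{V}_{\varepsilon}\vecv{T}_{\kappa}}=c\,\mathbb{I}$. The justification you give --- that $\kappa/\kappa_{\varepsilon}\to 1$ as $\|\boldsymbol{\xi}\|\to\infty$ --- is false: $\kappa$ and $\kappa_{\varepsilon}=\kappa+i\varepsilon$ do not depend on $\boldsymbol{\xi}$, and their ratio is a fixed constant different from $1$ whenever $\varepsilon\neq 0$. Carrying the computation through in the Hodge frame, the scalar ratio $\sqrt{\|\boldsymbol{\xi}\|^{2}-\kappa_{\varepsilon}^{2}}\,/\,\sqrt{\|\boldsymbol{\xi}\|^{2}-\kappa^{2}}$ does tend to $1$, but the matrix factors contribute $\kappa^{2}/\kappa_{\varepsilon}^{2}$ on one Hodge component and $(\kappa^{2}-\|\boldsymbol{\xi}\|^{2})/(\kappa_{\varepsilon}^{2}-\|\boldsymbol{\xi}\|^{2})\to 1$ on the other, so the limiting symbol is diagonal with two \emph{distinct} nonzero entries (of the form $\kappa/(2\kappa_{\varepsilon})$ and $\kappa_{\varepsilon}/(2\kappa)$), not a scalar multiple of $\mathbb{I}$. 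This is consistent with what the paper's own proof actually produces: its final expression acts by different ``identity plus compact'' factors on the $\vnablag\varphi_{\vecv{a}}$ and $\vcurlg\psi_{\vecv{a}}$ parts of the Helmholtz decomposition. The damage is repairable: since both limiting entries are nonzero, the principal symbol remains uniformly invertible, so $\vecv{V}_{\varepsilon}\vecv{T}_{\kappa}$ is a compact perturbation of an isomorphism (a combination of the two Hodge projectors with nonzero coefficients), which is all that the Fredholm-alternative argument in Proposition \ref{EFIEU} requires. But as written, the assertion $\sigma_{\vecv{V}_{\varepsilon}\vecv{T}_{\kappa}}=c\,\mathbb{I}$ is incorrect and must be replaced by this weaker, sufficient statement, or by explicitly factoring out the invertible zeroth-order diagonal part before invoking compactness of the remainder.
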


\begin{proof}

As we mentioned in the previous section, from \cite{el2014approximate} we have an expression for the principal symbol of $\vecv{V}_{\varepsilon}$

\[\sigma_{\vecv{V}_{\varepsilon}}(\pmb\xi) = i\kappa_{\varepsilon}\sqrt{\|\pmb{\xi}\|^{2}-\kappa_{\varepsilon}^{2}}\left(\mathbb{I}\kappa_{\varepsilon}^{2}-\cct\right)^{-1} \]

Together with \eqref{eq:preconditioned_efio} we have that

\begin{align}
\sigma_{\vecv{V}_{\varepsilon}\vecv{T}_{\kappa} }(\pmb\xi)
&=\frac{\kappa_{\varepsilon}}{2\kappa}\underbrace{\frac{\sqrt{\|\pmb{\xi}\|^{2}-\kappa_{\varepsilon}^{2}}}{\sqrt{\|\pmb{\xi}\|^{2}-\kappa^{2}}}}_{\sigma_1(\pmb{\xi})}\underbrace{\left(\mathbb{I}\kappa_{\varepsilon}^{2}-\cct\right)^{-1}\cdot(-1)\cdot\left(\mathbb{I}\kappa^{2}+ \cct \right)}_{\sigma_2(\pmb{\xi})}
\end{align}

We can rewrite $\sigma_1(\pmb\xi)$ as follows:

\begin{align}
    \sigma_1(\pmb\xi)&=\sqrt{\frac{\|\pmb{\xi}\|^{2}-(\kappa + i\varepsilon)^{2}}{\|\pmb{\xi}\|^{2}-\kappa^{2}}}\nonumber\\
    &=\sqrt{\frac{\|\pmb{\xi}\|^{2}-\kappa^2 + \delta}{\|\pmb{\xi}\|^{2}-\kappa^{2}}}, \;\;\; \delta = 2i\varepsilon\kappa - \varepsilon^2 \nonumber\\ 
    &=\sqrt{ 1+ \frac{\delta/\kappa^{2}}{\|\pmb{\xi}\|^{2}/\kappa^{2}-1}} \label{eq:symbol_rewrite}.
\end{align}

Paraphrasing theorem 6.4 in \cite{shubin1987pseudodifferential}, we have that if an operator $\vecv{A} \in \mathcal{L}_{p,\varsigma}^{0}(\mathbb{R}^n)$, $0<p,\leq \varsigma \leq 1$, \footnote{Definitions 
of $\mathcal{L}_{p,\varsigma}^{m}(\mathbb{R}^n)$ can be found in for instance \cite{shubin1987pseudodifferential,chen2010boundary}. In general,  $\vecv{A}\in \mathcal{L}_{p,\varsigma}^{m}(\mathbb{R}^n)$ if 
$\vecv{A}\in \mathcal{L}_{p,\varsigma}^{m}(\mathbb{R}^n)$, if $|\partial_{\xi}^{\alpha}\partial_{\vecv{x}}^{\beta}\sigma_{A}(\pmb{\xi},\vecv{x})| \leq C_{\alpha, \beta}(1+\|\pmb{\xi}\|^{2})^{(m-p|\alpha|+\varsigma|\beta|)/2}$}, 
with a kernel of compact support and a has principal symbol such that

\begin{equation*}
    \sup_{\vecv{x}}|\sigma_{\vecv{A}}(\pmb{\xi},\vecv{x})|\rightarrow 0 \;\;\text{ as } \|\pmb{\xi}\|\rightarrow +\infty,
\end{equation*}

\noindent then $\vecv{A}$ extends to a compact operator in $\vecv{L}^{2}(\mathbb{R}^{n})$. When applying this result to 
$\sigma_1(\pmb\xi)$, as $\|\pmb{\xi}\|^{2} \rightarrow \infty$ we can expand \eqref{eq:symbol_rewrite} into the Taylor series

\begin{align*}
    \sqrt{ 1+ \frac{\delta/\kappa^{2}}{\|\pmb{\xi}\|^{2}/\kappa^{2}-1}}=1 + \underbrace{\sum_{k=1}^{\infty} \binom{1/2}{k}\frac{(\delta/\kappa^2)^{k}}{(\|\pmb{\xi}\|^{2}/\kappa^{2}-1)^k}}_{\sigma_{1,b}(\pmb{\xi})}.
\end{align*}

See that $\left|\binom{1/2}{k}\right|$ is decreasing and given that $\left|\delta/\kappa^2\right|<1$, so the term
$|a_k| = \left|\binom{1/2}{k}(\delta/\kappa^2)^{k}\right| \leq a_{*}<\infty$ for all $k$ and as a result

\begin{align*}
    \sup_{\vecv{x}}|\sigma_{1,b}(\pmb{\xi})|&\leq \sup_{\vecv{x}}\sum_{k=1}^{\infty} \frac{|a_k|}{|\|\pmb{\xi}\|^{2}/\kappa^{2}-1|^k}\\
     &= a_{*}\left(\sup_{\vecv{x}}\sum_{k=1}^{\infty}\frac{1}{|\|\pmb{\xi}\|^{2}/\kappa^{2}-1|^k}\right)\\
     &= a_{*}\left(\sum_{k=1}^{\infty}\frac{1}{|\|\pmb{\xi}\|^{2}/\kappa^{2}-1|^k}\right).
\end{align*}

Then $\sup_{\vecv{x}}|\sigma_{1,b}(\pmb{\xi})|\rightarrow 0$ when $\|\pmb{\xi}\|\rightarrow+\infty$ and with this, $\sigma_{1,b}$
extends to a compact operator $\vecv{C}_{1}$. \\

Now we perform calculations on the symbol $\sigma_{2}(\pmb{\xi})$:

\begin{equation*}
    \sigma_{2}(\pmb{\xi}) = \left(\mathbb{M} + \delta/\kappa^2\vecv{I}\right)^{-1}(\mathbb{M} -2\mathbb{I}),
\end{equation*}

\noindent where $\mathbb{M} = \mathbb{I}-\frac{\vecv{cc}_{\pmb\xi, \Gamma}}{\kappa^2}$. We compute $\sigma_{2}(\pmb{\xi})$ by expanding $\left(\mathbb{M} - \delta/\kappa^2\mathbb{I}\right)^{-1}$ using power series:

\begin{align*}
   \left(\mathbb{M} - \delta/\kappa^2\mathbb{I}\right)^{-1}(\mathbb{M} -2\mathbb{I}) &= \left[\sum_{k=0}(-\delta/\kappa^2)^{k}(\mathbb{M}^{-1})^k \mathbb{M}^{-1}\right] (\mathbb{M} -2\mathbb{I})\\
   &=\left(\mathbb{I}-2\mathbb{M}^{-1}\right)+\sum_{k=1}(-\delta/\kappa^2)^{k}(\mathbb{M}^{-1})^k -  2\sum_{k=1}(-\delta/\kappa^2)^{k}(\mathbb{M}^{-1})^{k+1},
\end{align*}

\noindent for which we have

\begin{equation*}
    \mathbb{M}^{-1} = \frac{-\kappa^2 \mathbb{J}\mathbb{M}\mathbb{J}}{\kappa^2-\|\pmb{\xi}\|^{2}}
\end{equation*}

\noindent and then

\begin{equation*}
    \mathbb{M}^{-k} = \frac{(-1)^{k}\kappa^{2k} \mathbb{J}\mathbb{M}^k\mathbb{J}}{(\kappa^2-\|\pmb{\xi}\|^{2})^k}.
\end{equation*}

In this way:

\begin{equation*}
    \sum_{k=1}(-\delta/\kappa^2)^{k}(\mathbb{M}^{-1})^k = \mathbb{J}\left(\sum_{k=1}(-\delta/\kappa^2)^k\frac{\kappa^{2k} \mathbb{M}^{k}}{(\kappa^2-\|\pmb{\xi}\|^{2})^{k}}\right)\mathbb{J}
\end{equation*}

See that we can expand $\kappa^{2k}\mathbb{M}^{k}$ and $(\kappa^2-\|\pmb{\xi}\|^{2})^{k}$ into 

\begin{align*}
    \kappa^{2k}\mathbb{M}^{k} &= \sum_{j=0}^{k} \binom{k}{j} (-1)^{j} \kappa^{2(k-j)}\cct^j, \text{ and also }\\
    (\kappa^2-\|\pmb{\xi}\|^{2})^{k} &= \|\pmb{\xi}\|^{2}\left[\sum_{j=0}^{k}\binom{k}{j} \kappa^{2(k-j)}(-1)^{j}\|\pmb{\xi}\|^{2(j-1)}\right].
\end{align*}

Notice that $\cct^j =(-1)^{j}\|\pmb{\xi}\|^{2(j-1)}\cct$, therefore:

\begin{align*}
    \sum_{k=1}(-\delta/\kappa^2)^{k}(\mathbb{M}^{-1})^k &=  \mathbb{J}\left(\sum_{k=1}(\delta/\kappa^2)^k\right)\frac{\cct}{\|\pmb{\xi}\|^{2}}\mathbb{J}, \text{ and }\\
    2\sum_{k=1}(-\delta/\kappa^2)^{k}(\mathbb{M}^{-1})^{k+1} &= 2\mathbb{J}\left(\sum_{k=1}(\delta/\kappa^2)^k\right)\frac{\cct}{\|\pmb{\xi}\|^{2}}\mathbb{J}.
\end{align*}

With this,

\begin{equation*}
\sigma_{2}(\pmb{\xi})=\mathbb{I}+\mathbb{J}\left[2\frac{\kappa^2\mathbb{I}-\cct}{\kappa^2-\|\pmb{\xi}\|^{2}}-\left(\sum_{k=1}(\delta/\kappa^2)^k\right)\frac{\cct}{\|\pmb{\xi}\|^{2}}\right]\mathbb{J}.
\end{equation*}

See that

$$\frac{\kappa^2\mathbb{I}-\cct}{\kappa^2-\|\pmb{\xi}\|^{2}} = \mathbb{I} + \frac{\gdt}{\|\pmb{\xi}\|^{2}\left(1-\frac{\kappa^2}{\|\pmb{\xi}\|^{2}}\right)},$$

\noindent so as $\|\pmb{\xi}\|\rightarrow +\infty$ we rewrite

$$\frac{1}{\left(1-\frac{\kappa^2}{\|\pmb{\xi}\|^{2}}\right)} = 1 + \sum_{k=1}\left(\frac{\kappa}{\|\pmb{\xi}\|}\right)^{2k}.$$

Again, we use \cite[Theorem 6.4]{shubin1987pseudodifferential} to justify the extension of the last expression to
$1 + \vecv{C}_{2}$ where $\vecv{C}_2$ is a compact scalar operator in $\vecv{L}^2(\mathbb{R}^2)$.

Now, assuming $\left|\delta/\kappa^2\right|<1$ we can compute the constant $$\sum_{k=1}(\delta/\kappa^2)^k = \frac{-2\varepsilon(\varepsilon+i\kappa)}{(\kappa^2-2i\varepsilon\kappa+\varepsilon^2)} = c_{1}(\kappa, \varepsilon) < \infty$$ and we write

$$\sigma_{2}(\pmb{\xi})=-\mathbb{I}+\frac{2}{\|\pmb{\xi}\|^{2}}\left[c_{1}(\kappa, \varepsilon)\gdt-(1+\vecv{C}_2)\cct\right].$$

We can use the fact that $\Deltag$ is an isomorphism from $H^{s+2}(\Gamma)\slash\mathbb{R}$ to $H^{s}_{*}(\Gamma):=\left\lbrace u \in H^{s}(\Gamma): \int_{\Gamma}u = 0\right\rbrace$ 
for all $s\geq 0$ \cite{de1993decomposition}, so the inverse $\Deltag^{-1}:H^{s}_{*}(\Gamma)\rightarrow H^{s+2}(\Gamma)\slash\mathbb{R}$ can be properly defined. 
With this, $\sigma_{\Deltag^{-1}} = -\|\tilde{\xi}\|^{-2}$, and as a result, $\sigma_{2}(\pmb{\xi})$ is the principal symbol of

\begin{equation*}
    -\vecv{I} + 2\left[c_{1}(\kappa, \varepsilon) \Deltag^{-1}\vnablag \divg-(1+\vecv{C}_2)\Deltag^{-1}\vcurlg \curlg\right].
\end{equation*}

Consequently, we can write the following:

\begin{equation*}
    \vecv{V}_{\varepsilon}\vecv{T}_{\kappa}= \frac{\kappa_{\varepsilon}}{2\kappa}(1+\vecv{C}_1)\left\lbrace 2\left[c_{1}(\kappa, \varepsilon) \Deltag^{-1}\vnablag \divg-(1+\vecv{C}_2)\Deltag^{-1}\vcurlg \curlg\right] -\vecv{I}\right\rbrace.
\end{equation*}

Now, consider the Helmholtz decomposition from \cite[Theorem 4.1]{costabel2012shape} for $\Gamma$ smooth and simply connected of the density $\vecv{a} \in \Htdiv$ : $(\psi_{\vecv{a}}, \varphi_{\vecv{a}}) \in H^{1/2}(\Gamma) \times H^{3/2}(\Gamma)$,

\begin{equation}
\label{eq:Hdecomp}
\vecv{a} = \vcurlg \psi_{\vecv{a}} + \vnablag \varphi_{\vecv{a}}
\end{equation}

We see that 

\begin{align*} 
\vnablag\divg\vecv{a} &= \vnablag\divg(\vcurlg \psi_{\vecv{a}} + \vnablag \varphi_{\vecv{a}}) = \Deltag\vnablag \varphi_{\vecv{a}},\;\;\text{ and that }\\ 
\vcurlg \curlg\vecv{a} &= \vcurlg \curlg(\vcurlg \psi_{\vecv{a}} + \vnablag \varphi_{\vecv{a}}) = -\Deltag\vcurlg \psi_{\vecv{a}},
\end{align*}
so the action of $\vecv{V}_{\varepsilon}\vecv{T}_{\kappa}$ on $\vecv{a}\in \Htdiv$ gives us

\begin{equation*}
    \vecv{V}_{\varepsilon}\vecv{T}_{\kappa}\vecv{a} = \frac{\kappa_{\varepsilon}}{2\kappa}(1+\vecv{C}_1)\left\lbrace 2\left[c_{1}(\kappa, \varepsilon) \varphi_{\vecv{a}}+(1+\vecv{C}_2)\vcurlg \psi_{\vecv{a}} \right] -\vecv{a}\right\rbrace
\end{equation*}

See that the product $(1+\vecv{C}_1)(1+\vecv{C}_2)\vecv{I} = \vecv{I}+\left(\vecv{C}_1 + \vecv{C}_2 + \vecv{C}_1\vecv{C}_2\right)$ is a
compact perturbation of the identity. Indeed, we have proved that $\vecv{C}_1$ and $\vecv{C}_2$ are scalar compact operators (thus bounded), so the product $\vecv{C}_1\vecv{C}_2$ is also compact. In short,

\begin{align*}
    \vecv{V}_{\varepsilon}\vecv{T}\;\vecv{a}&= \frac{\kappa_{\varepsilon}}{2\kappa}\left[(\vecv{I}+\vecv{C}_1') \varphi_{\vecv{a}}+(\vecv{I}+\vecv{C}_2')\vcurlg \psi_{\vecv{a}} \right]\\
\end{align*}

\noindent for $\vecv{C}_1'$ $\vecv{C}_2'$ compact operators, and $\vecv{V}_{\varepsilon}\vecv{T}_{\kappa}$ is a Fredholm operator of second kind.

\begin{Rem}

Notice that for $\varepsilon = 0.39 \kappa^{1/3}\left(\mathcal{H}\right)^{2/3}$ where $\mathcal{H}$ is a constant representing the mean curvature of $\Gamma$, we see that $c_{1}(\kappa, \varepsilon)$ is $O(\kappa^{-1})$, so for large $\kappa$, $\vecv{V}_{\varepsilon}\vecv{T}_{\kappa} \vecv{a}\approx \frac{\kappa_{\varepsilon}}{2\kappa}(1+\vecv{C}_1)\left\lbrace 2(1+\vecv{C}_2)\vcurlg \psi_{\vecv{a}} -\vecv{a}\right\rbrace$.

Also, notice that we can also compute $\sigma_{\vecv{V}_{\varepsilon}\SLO}$, since we previously stated $\sigma_{\SLO}(\pmb\xi) = \mathbb{J}\sigma_{\vecv{T}_{\kappa}}(\pmb\xi)$. Thus  we can rewrite $\vecv{V}_{\varepsilon}\SLO$ as

\begin{equation*}
    \vecv{V}_{\varepsilon}\SLO = \frac{\kappa_{\varepsilon}}{2\kappa}(1+\vecv{C}_1)\left\lbrace 2\Deltag^{-1}\left[c_{1}(\kappa, \varepsilon) \vnablag \divg-(1+\vecv{C}_2)\vcurlg \curlg\right] -\vecv{I}\right\rbrace(\cdot \times \pmb\nu)
\end{equation*}

\noindent and then

\begin{equation*}
    \vecv{V}_{\varepsilon}\SLO\vecv{a} = \frac{\kappa_{\varepsilon}}{2\kappa}(1+\vecv{C}_1)\left\lbrace 2\left[(1+\vecv{C}_2)\vcurlg \varphi_{\vecv{a}}-c_{1}(\kappa, \varepsilon) \psi_{\vecv{a}} \right] -\vecv{a}\times \pmb\nu\right\rbrace.
\end{equation*}

\end{Rem}

\end{proof}

\begin{corollary}
Let $\Gamma$ be a smooth and simply connected surface; assuming that $\left|\delta/\kappa^2\right|<1$ for $\delta = 2i\varepsilon\kappa - \varepsilon^2$ and that 
$\kappa^2$ is not an interior resonance, we have that for $\vecv{a} = \vnablag\varphi_{\vecv{a}} + \vcurlg\psi_{\vecv{a}} \in \Htdiv$; 
$\pmb\Lambda_{2}^{-1}\SLO\vecv{a}$ is equivalent to compute $\pmb\Psi_{\kappa}(\vecv{b}\times\pmb\nu)$, where $\pmb\Psi_{\kappa}$ is defined in \eqref{eq:vec_sl_op}, and

$$\vecv{b} = \left\lbrace 2\left[c_{1}(\kappa, \varepsilon) \varphi_{\vecv{a}}+(1+\vecv{C}_2)\vcurlg \psi_{\vecv{a}} \right] -\vecv{a}\right\rbrace.$$
Similarly, $\pmb\Lambda_{2}^{-1}\vecv{T}_{\kappa}\vecv{a}\equiv\pmb\Psi_{\kappa}\;\vecv{b}$.

\end{corollary}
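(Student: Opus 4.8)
The plan is to read this corollary as a computational reformulation of Lemma~\ref{lemma:EtM_efie}: it asserts that the preconditioned operators $\pmb\Lambda_{2,\varepsilon}^{-1}\vecv{T}_\kappa$ and $\pmb\Lambda_{2,\varepsilon}^{-1}\SLO$ can be evaluated by a \emph{single} scalar single-layer application to an explicitly known density, with the expensive square-root factor $\pmb\Lambda_{1,\varepsilon}$ removed. The starting point is the factorization $\vecv{V}_{\varepsilon}=\pmb\Lambda_{2,\varepsilon}^{-1}\pmb\Lambda_{1,\varepsilon}$ from \eqref{eq:MtE_1st_app}, which gives $\pmb\Lambda_{2,\varepsilon}^{-1}=\vecv{V}_{\varepsilon}\pmb\Lambda_{1,\varepsilon}^{-1}$. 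Since at the level of principal symbols $\pmb\Lambda_{1,\varepsilon}$, $\pmb\Lambda_{2,\varepsilon}$ and $\vecv{T}_\kappa$ are all simultaneously diagonalized by the Helmholtz/Fourier splitting and hence commute, this yields the working identity $\pmb\Lambda_{2,\varepsilon}^{-1}\vecv{T}_\kappa=\pmb\Lambda_{1,\varepsilon}^{-1}\bigl(\vecv{V}_{\varepsilon}\vecv{T}_\kappa\bigr)$.

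First I would treat the $\vecv{T}_\kappa$ case. Lemma~\ref{lemma:EtM_efie} already gives $\vecv{V}_{\varepsilon}\vecv{T}_\kappa=\tfrac{\kappa_\varepsilon}{2\kappa}(1+\vecv{C}_1)\,\vecv{b}$ with $\vecv{b}$ exactly as in the statement, the factor $(1+\vecv{C}_1)$ carrying the scalar symbol $\sigma_1(\pmb\xi)=\sqrt{\|\pmb\xi\|^2-\kappa_\varepsilon^2}/\sqrt{\|\pmb\xi\|^2-\kappa^2}$. The decisive step is to combine $\pmb\Lambda_{1,\varepsilon}^{-1}$, whose principal symbol is $\kappa_\varepsilon/\sqrt{\kappa_\varepsilon^2-\|\pmb\xi\|^2}$, with this factor: the $\sqrt{\|\pmb\xi\|^2-\kappa_\varepsilon^2}$ contributions cancel, leaving the scalar $\tfrac{\kappa_\varepsilon^2}{i\kappa}\bigl(2\sqrt{\|\pmb\xi\|^2-\kappa^2}\bigr)^{-1}=\tfrac{\kappa_\varepsilon^2}{i\kappa}\,\sigma_{\Psi_\kappa}(\pmb\xi)$. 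Because the matrix symbol $\sigma_2$ is precisely the one that produces $\vecv{b}$ from $\vecv{a}$ through the Helmholtz decomposition \eqref{eq:Hdecomp}, this shows $\pmb\Lambda_{2,\varepsilon}^{-1}\vecv{T}_\kappa\vecv{a}=\tfrac{\kappa_\varepsilon^2}{i\kappa}\pmb\Psi_\kappa\vecv{b}$ modulo the compact remainders already isolated in the lemma, i.e. $\pmb\Lambda_{2,\varepsilon}^{-1}\vecv{T}_\kappa\vecv{a}\equiv\pmb\Psi_\kappa\vecv{b}$.

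For the $\SLO$ case I would proceed identically, starting from the companion expression for $\vecv{V}_{\varepsilon}\SLO$ recorded in the remark at the end of the proof of Lemma~\ref{lemma:EtM_efie}, and again absorb $\pmb\Lambda_{1,\varepsilon}^{-1}$ to reduce to a single application of $\pmb\Psi_\kappa$. The connection to the stated form uses $\SLO=\vecv{T}_\kappa\times\pmb\nu$ together with the fact that the scalar (componentwise) operator $\pmb\Psi_\kappa$ commutes with the constant tangential rotation, so the rotation may be moved onto the density to give $\pmb\Psi_\kappa(\vecv{b}\times\pmb\nu)$. The compactness of every lower-order remainder that appears is justified exactly as in the lemma, by expanding the relevant symbols as $\|\pmb\xi\|\to\infty$ and invoking \cite[Theorem~6.4]{shubin1987pseudodifferential}.

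The step I expect to be the main obstacle is the rotation bookkeeping in the $\SLO$ case. Because $\pmb\Lambda_{2,\varepsilon}^{-1}$ is built from $\vcurlg\curlg$, it acts asymmetrically on the gradient and curl parts of the Helmholtz splitting and so does \emph{not} literally commute with $(\cdot)\times\pmb\nu$, which interchanges those two components. One must therefore verify that the density obtained from the $\vecv{V}_{\varepsilon}\SLO$ expression agrees with $\vecv{b}\times\pmb\nu$ only after the $O(\kappa^{-1})$ constant $c_1(\kappa,\varepsilon)$ and the compact operator $\vecv{C}_2$ are accounted for; this is exactly why the statement is phrased as an equivalence ``$\equiv$'' rather than an exact identity, and it is the point where the argument must be carried out most carefully.
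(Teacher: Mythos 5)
Your argument is correct and is essentially the paper's own proof: the paper simply writes $\sigma_{\pmb\Lambda_{2,\varepsilon}^{-1}\SLO}=\sigma_{\pmb\Psi_{\kappa}}\,\sigma_2\,\mathbb{J}$ directly and reuses the action $\vecv{a}\mapsto\vecv{b}$ of $\sigma_2$ on the Helmholtz decomposition established in Lemma~\ref{lemma:EtM_efie}, which is exactly the product you obtain after inserting $\pmb\Lambda_{1,\varepsilon}$ via $\pmb\Lambda_{2,\varepsilon}^{-1}=\vecv{V}_{\varepsilon}\pmb\Lambda_{1,\varepsilon}^{-1}$ and cancelling its symbol against $\sigma_1$. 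Your closing caveat about the rotation not commuting with $\pmb\Lambda_{2,\varepsilon}^{-1}$ is well taken: the paper glosses over this by post-multiplying by $\mathbb{J}$, which is precisely why the gradient and curl components exchange roles between the $\vecv{T}_{\kappa}$ and $\SLO$ variants (cf.\ the remark following Lemma~\ref{lemma:EtM_efie}), so the identification of the resulting density with $\vecv{b}\times\pmb\nu$ does require the careful bookkeeping you flag.
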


\begin{proof}

Unlike in the previous case, we have the following: 

\begin{align*}
    \sigma_{\pmb\Lambda_{2}^{-1}\SLO }(\pmb\xi)
    &=\underbrace{\frac{1}{2i\kappa\sqrt{\|\pmb{\xi}\|^{2}-\kappa^{2}}}}_{\sigma_{\pmb\Psi_{\kappa}}(\pmb{\xi})}\sigma_2(\pmb{\xi})\mathbb{J}
\end{align*}

In short,
\begin{align*}
    \pmb\Lambda_{2}^{-1}\SLO \vecv{a} &= \pmb\Psi_{\kappa}(\vecv{b}\times\pmb\nu),
\end{align*}

where $\vecv{b} = \left\lbrace 2\left[c_{1}(\kappa, \varepsilon) \varphi_{\vecv{a}}+(1+\vecv{C}_2)\vcurlg \psi_{\vecv{a}} \right] -\vecv{a}\right\rbrace$. 
Finally, it is clear that $\pmb\Lambda_{2}^{-1}\vecv{T}_{\kappa}\vecv{a} \equiv \pmb\Psi_{\kappa}\;\vecv{b}$.

\end{proof}

\begin{Lem}
\label{Leminjectivity}
Let $\Gamma$ be a sufficiently smooth surface. The generalized EFIE integral operators $\wLambda_{\varepsilon}\SLO: \Htdiv \rightarrow \Htdiv$ $\vecv{V}_{\varepsilon}\vecv{T}_{\kappa}: \Htdiv \rightarrow \Htdiv$ are injective operators.
\end{Lem}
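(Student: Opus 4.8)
The plan is to reduce the injectivity of each preconditioned operator to the injectivity of the bare electric operator $\SLO$, by peeling off the OSRC factors once I have shown that those factors are themselves invertible. Concretely, I would first argue that $\wLambda_{\varepsilon}=\pmb\Lambda_{1,\varepsilon}^{-1}\pmb\Lambda_{2,\varepsilon}(\pmb\nu\times\cdot)$ and $\vecv{V}_{\varepsilon}=\pmb\Lambda_{2,\varepsilon}^{-1}\pmb\Lambda_{1,\varepsilon}$ are injective, indeed bijective, on the relevant tangential trace spaces. Granting this, if $\wLambda_{\varepsilon}\SLO\vecv{a}=\vecv{0}$ then applying $\wLambda_{\varepsilon}^{-1}$ yields $\SLO\vecv{a}=\vecv{0}$, and likewise $\vecv{V}_{\varepsilon}\vecv{T}_{\kappa}\vecv{a}=\vecv{0}$ forces $\vecv{T}_{\kappa}\vecv{a}=\vecv{0}$; so everything comes down to the kernels of $\SLO$ and $\vecv{T}_{\kappa}$.

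The central technical step is the invertibility of the OSRC factors, and here the damping parameter $\varepsilon\neq0$ does the work. The operators $\vDeltag$ and $\vcurlg\scurl$ are self-adjoint on the tangential $\vecv{L}^2$-scale, with real spectrum (nonpositive and nonnegative, respectively) and, on a compact smooth $\Gamma$, compact resolvent. Since $\kappa_{\varepsilon}^2=\kappa^2-\varepsilon^2+2i\kappa\varepsilon$ has nonzero imaginary part whenever $\kappa>0$ and $\varepsilon\neq0$, the value $-\kappa_{\varepsilon}^2$ cannot lie in the spectrum of $\vDeltag$ and $\kappa_{\varepsilon}^2$ cannot lie in the spectrum of $\vcurlg\scurl$. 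Hence $\pmb\Lambda_{2,\varepsilon}=\vecv{I}-\kappa_{\varepsilon}^{-2}\vcurlg\scurl$ and $\vecv{I}+\kappa_{\varepsilon}^{-2}\vDeltag$ are boundedly invertible; the spectral theorem then defines $\pmb\Lambda_{1,\varepsilon}=(\vecv{I}+\kappa_{\varepsilon}^{-2}\vDeltag)^{1/2}$ with spectrum bounded away from $0$, so $\pmb\Lambda_{1,\varepsilon}^{\pm1}$ are invertible as well. As $\pmb\nu\times\cdot$ is an isomorphism between $\Htdiv$ and $\Htcurl$, the compositions $\wLambda_{\varepsilon}$ and $\vecv{V}_{\varepsilon}$ are products of invertible maps, hence injective. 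I expect this paragraph to be the main obstacle: the rigorous functional-calculus definition of the fractional power $\pmb\Lambda_{1,\varepsilon}$ and the verification that the complex shift keeps $0$ out of its spectrum (on the correct Sobolev scale) is the delicate point, whereas the rest is bookkeeping.

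It then remains to show that $\SLO$, equivalently $\vecv{T}_{\kappa}$, is injective. This is the classical fact that the electric single-layer (EFIE) operator has trivial kernel precisely when $\kappa^2$ is not an interior Maxwell eigenvalue: a nonzero $\vecv{a}$ with $\SLO\vecv{a}=\vecv{0}$ would generate, via the single-layer potential $\mathcal{T}\vecv{a}$ and the Stratton--Chu representation, a nontrivial interior electric eigenmode at $\kappa$, contradicting the non-resonance hypothesis inherited from Lemma \ref{lemma:EtM_efie}. Finally, the relation between $\SLO$ and $\vecv{T}_{\kappa}$ recorded in the Proposition above shows that these two operators differ only by composition with the isomorphism $\pmb\nu\times\cdot$, so their kernels vanish simultaneously. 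Combining this with the reduction of the first paragraph gives $\vecv{a}=\vecv{0}$ in both cases, which proves injectivity; note that, together with the second-kind Fredholm property from Lemma \ref{lemma:EtM_efie}, injectivity upgrades to bijectivity through the Fredholm alternative, thereby settling well-posedness of the preconditioned formulations.
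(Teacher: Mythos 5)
Your proposal is correct and shares the paper's overall skeleton (factor the composition, prove each factor injective), but the way you handle the OSRC factor is genuinely different. The paper does not argue invertibility of $\wLambdaeps$ at all: it expands a putative kernel element $\va$ in the Hodge eigenbasis $\{{\bf u}_j,\vnablag Y_i/\sqrt{\lambda_i},\vcurlg Y_i/\sqrt{\lambda_i}\}$ of $\vDeltag$, computes the duality pairing $\langle\pmb\nu\times\va,\wLambdaeps\va\rangle_{\Gamma}$ explicitly, and uses the strict positivity $\Re\bigl((1-\lambda_i/\kke)^{\pm1/2}\bigr)>0$ (valid for $\kappa,\varepsilon>0$) to conclude that the real part of this pairing is a positive-definite quadratic form in the expansion coefficients, hence vanishes only for $\va=\vO$. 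You instead observe that $\kke\notin\IR$ keeps $-\kke$ and $\kke$ out of the real spectra of the self-adjoint operators $\vDeltag$ and $\vcurlg\scurl$, so all factors of $\wLambdaeps$ and $\vecv{V}_{\varepsilon}$ are boundedly invertible by functional calculus. Your route buys slightly more (bijectivity of the preconditioner, not just injectivity) and avoids the explicit pairing computation, at the price of having to justify the fractional power and the lower bound on $|(1-\lambda_i/\kke)^{1/2}|$ on the correct trace-space scale --- the point you rightly flag as delicate, and which the paper sidesteps because it only ever evaluates the symbol on individual eigenmodes. On the second factor you are in fact more careful than the paper: the kernel of $\SLO$ is governed by \emph{interior} Maxwell eigenvalues, so the non-resonance hypothesis you import from Lemma \ref{lemma:EtM_efie} is genuinely needed, whereas the paper's appeal to uniqueness of the \emph{exterior} problem glosses over this classical spurious-resonance caveat.
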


\begin{proof}
The proofs for $\pmb\Lambda_{\varepsilon}\SLO$ and $\vecv{V}_{\varepsilon}\vecv{T}_{\kappa}$ are analogous. We have decided to proceed with the injectivity of $\pmb\Lambda_{\varepsilon}\SLO$ for which we must show that the homogeneous equation $\pmb\Lambda_{\varepsilon}\SLO\va \textrm{ in } \Htdiv$ has only the trivial solution $\va=\vO$. 

For either a direct or an indirect EFIE formulation, for piecewise Lipschitz $\Gamma$, we have that in the formulation

$$\SLO\vecv{a} =  \vecv{rhs}(\tangential^{+}\vecv{e}^{s}),$$

\noindent where $\vecv{rhs}(\tangential^{+}\vecv{e}^{s}) = \vecv{0}$ implies $\tangential^{+}\vecv{e}^{s} = \vecv{0}$. In this scenario, we can always find a unique solution to the system \eqref{Maxwell}, 
with the conditions \eqref{eq:conducting}, and \eqref{Silver-Muller} \cite[Theorem 6.1]{colton1998inverse}, and therefore,  $\vecv{a}= \vecv{0}$.

We must prove that the operator $\pmb\Lambda_{\varepsilon}: \Htdiv \rightarrow \Htdiv$ is also injective. We start by rewriting \eqref{eq:etm_map}:

\[
\va = \sg{\tangential}{+} \vecv{e}^{s},\;\;  i\kappa\pmb\Lambda_{\varepsilon}\va = \sg{\tangential}{+} \curl\; \vecv{e}^{s}\;\;\textrm{ on } \Gamma,
\]

and an application of the Gauss theorem directly gives

\[
 <\pmb\nu\times\va,ik\pmb\Lambda_{\varepsilon}\va>_{\Gamma} = -\int_{\Omega^+} ({|\mathrm{{\bf curl}}\;{\vecv{e}}|}^2 - \kappa^2 {|{\vecv{e}}|}^2) d \Omega^+ \in \mathbb{R}.
\]  

This implies

\[
\Re (<\pmb\nu\times\va,\pmb\Lambda_{\varepsilon}\va>_{\Gamma})=0.
\]

The next step consists of computing the quantity $<\pmb\nu\times\va,\pmb\Lambda_{\varepsilon}\va>_{\Gamma}$. To this end, an explicit expression of the solution $\va$ is required. We use the
following result \cite{terrasse1993resolution}.

\begin{Theo}
Let ${(Y_i)}_{i \in \mathbb{N}}$ be a basis of eigenvectors associated to
the scalar Laplace-Beltrami operator $\Delta_{\Gamma}$ 
\[
- \Delta_{\Gamma} Y_i = \lambda_i Y_i.
\]
The eigenvalues $\lambda_i$ are real positive numbers and $\lambda_0=0$. The family ${(Y_i)}_{i \in \mathbb{N}}$
is an orthornormal Hilbertian basis of $L^2(\Gamma)$.
Let $\{{\bf u}_1,...,{\bf u}_N\}$ be an orthonormal basis of the space 
$\mathcal{N}=\{ \vv \in  {\bf L}_{\times}^2(\Gamma) | \vDeltag \vv = \vO \}$ of the harmonic tangential fields. Then, the family
\[
\displaystyle \left\lbrace ({\bf u}_1,...,{\bf u}_\text{N}), \Big(\frac{\vnablag Y_i}{\sqrt{\lambda_i}}, \frac{\vcurlg Y_i}{\sqrt{\lambda_i}}\Big)_{i \geq 1} \right\rbrace
\]
is an orthonormal Hilbertian basis of ${\bf L}_{\times}^2(\Gamma)$. Moreover, it defines a basis of eigenvectors 
for the Hodge operator $\vDeltag$ and one has, for $i \geq 1$, 
\begin{equation}
\label{vapsLGamma} 
\left\{ 
\begin{array}{l}
\displaystyle -\vDeltag \Big(\frac{\vnablag Y_i}{\sqrt{\lambda_i}}\Big) = \lambda_i \Big(\frac{\vnablag Y_i}{\sqrt{\lambda_i}}\Big) \\
\displaystyle -\vDeltag \Big(\frac{\vcurlg Y_i}{\sqrt{\lambda_i}}\Big) = \lambda_i \Big(\frac{\vcurlg Y_i}{\sqrt{\lambda_i}}\Big). 
\end{array}
\right. 
\end{equation} 
\end{Theo}

According to the above result, $\va$ can be written as
\[
\va = \sum_{j=1}^{\text{N}} \beta_j {\bf u}_j + \sum_{i=1}^{\infty} \Big(\alpha_i \frac{\vnablag Y_i}{\sqrt{\lambda_i}} + \gamma_i \frac{\vcurlg Y_i}{\sqrt{\lambda_i}}\Big),
\]
for some real parameters $\beta_j$, $\alpha_i$, $\gamma_i$.

We apply the operator $\wLambda_{\varepsilon}$ to $ \va$. We have $\wLambda_{\varepsilon} {\bf u}_j={\bf u}_j$ because ${\bf u}_j \in \mathcal{N}$. Let us observe the application of $\wLambda_{\varepsilon}$ on a tangential vector field $\vv \in \Htcurl$. To this aim, let us consider the 
Helmholtz decomposition of $\vv$ 

\begin{equation}
\label{decompH}
\vv = \vcurlg \psi_{\vv} +  \vnablag \varphi_{\vv}, 
\end{equation}

\noindent with ($\psi_{\vv}$, $\varphi_{\vv}$) $\in H^{3/2}(\Gamma) \times H^{1/2}(\Gamma)$ \cite{de1993decomposition,vernhet1997approximation}. The application 
of the operator $\wLambda_{\varepsilon}$ to the above decomposition yields

\begin{equation}\label{decompLambda}
	\wLambda_{\varepsilon} \vv = \dsp \vcurlg \left[ \left(1 + \frac{\Deltag}{\kke}\right)^{-1/2} \psi_{\vv} \right] + \vnablag \left[ \left(1 + \frac{\Deltag}{\kke}\right)^{1/2} \varphi_{\vv} \right]. 
\end{equation}

Moreover with the help of (\ref{vapsLGamma}), we obtain 
\[
\wLambda_{\varepsilon} \Big(\frac{\vcurlg Y_i}{\sqrt{\lambda_i}} \Big)= \displaystyle {\Big(1 - \frac{\lambda_i}{k^2_{\varepsilon}}\Big)}^{1/2}\frac{\vcurlg Y_i}{\sqrt{\lambda_i}}
\;\mbox{ and }\;
\wLambda_{\varepsilon} \Big(\frac{\vnablag  Y_i}{\sqrt{\lambda_i}} \Big)= \displaystyle {\Big(1 - \frac{\lambda_i}{k^2_{\varepsilon}}\Big)}^{-1/2} \frac{\vnablag  Y_i}{\sqrt{\lambda_i}}.
\] 

We deduce the relation

\[
<\pmb\nu\times\va, \wLambda_{\varepsilon}\va>_{ \Gamma} = \sum_{j=1}^\text{N} \beta^2_j {|{\bf u}_j|}^2 + \sum_{i=1}^{\infty} \left[\alpha^2_i {\Big(1 - \frac{\lambda_i}{k^2_{\varepsilon}}\Big)}^{-1/2} + \gamma^2_i {\Big(1 - \frac{\lambda_i}{k^2_{\varepsilon}}\Big)}^{1/2} \right],
\]

\noindent whose real part is given by

\begin{align*}
\Re(<\pmb\nu\times\va,\wLambda_{\varepsilon} \va >_{ \Gamma}) &= \sum_{j=1}^\text{N} \beta^2_j {| {\bf u}_j|}^2 \\
& + \sum_{i=1}^{\infty} \left[\alpha^2_i \Re\Big({\Big(1 - \frac{\lambda_i}{k^2_{\varepsilon}}\Big)}^{-1/2}\Big) + \gamma^2_i \Re\Big({\Big(1 - \frac{\lambda_i}{k^2_{\varepsilon}}\Big)}^{1/2}\Big)\right].
\end{align*}

However, since $\lambda_i>0$ for $i \in \mathbb{N}^{*}$, we can directly prove that:
$$\Re({(1 - \lambda_i/ k^2_{\varepsilon})}^{\pm 1/2})>0, \text{ for $k>0$ and $\varepsilon>0$.}$$

As a consequence,  the relation $$\Re(< \pmb\nu\times\va,\wLambda_{\varepsilon} \va>_{ \Gamma})=0$$ leads to $\beta_j=0$, $\forall j \in \{1,...,\text{N}\}$, and $\alpha_i=\gamma_i=0$, $\forall
i \in \mathbb{N}^{*}$. This finally proves that: $\va=\vO$.

Since $\SLO$ and $\vecv{V}_{\varepsilon}:\Htcurl \rightarrow \Htdiv$ are both injective operators, the regularised EFIE $\vecv{V}_{\varepsilon}\SLO: \Htdiv \rightarrow \Htdiv$ is an injective 
operator.  
\end{proof}

\begin{Prop} 
\label{EFIEU} 
Let $\Gamma$ be a sufficiently smooth surface. The regularized EFIE $\wLambdaeps\SLO$ and the formulation $\vecv{V}_{\varepsilon}\vecv{T}_{\kappa}$, are uniquely solvable for any frequency $\kappa>0$ and damping parameter $\varepsilon >0$.
\end{Prop}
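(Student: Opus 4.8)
The plan is to combine the two structural results already proved in this section via the Fredholm alternative, applied separately to each of the two preconditioned formulations. For the formulation $\vecv{V}_{\varepsilon}\vecv{T}_{\kappa}:\Htdiv\rightarrow\Htdiv$, Lemma \ref{lemma:EtM_efie} establishes that the operator is a compact perturbation of the identity, i.e.\ Fredholm of the second kind with index zero. For such operators the Fredholm alternative guarantees that surjectivity (and hence bijectivity) is equivalent to injectivity. Lemma \ref{Leminjectivity} supplies exactly that injectivity. Chaining these two facts yields that $\vecv{V}_{\varepsilon}\vecv{T}_{\kappa}$ is boundedly invertible on $\Htdiv$, so the associated preconditioned EFIE \eqref{eq:EFIEV} has a unique solution for every admissible $\kappa$ and $\varepsilon$.

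First I would state the Fredholm alternative in the precise form needed: if $\vecv{A}=\vecv{I}+\vecv{C}$ with $\vecv{C}$ compact on a Banach (here Hilbert) space, then $\vecv{A}$ is injective if and only if it is surjective if and only if it is bijective with bounded inverse. Then I would invoke Lemma \ref{lemma:EtM_efie} to write $\vecv{V}_{\varepsilon}\vecv{T}_{\kappa}$ in the form $\tfrac{\kappa_{\varepsilon}}{2\kappa}(\vecv{I}+\vecv{C})$ for a compact $\vecv{C}$ (noting the nonzero scalar prefactor $\tfrac{\kappa_{\varepsilon}}{2\kappa}$ does not affect invertibility), and apply Lemma \ref{Leminjectivity} to conclude injectivity, hence invertibility. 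The hypotheses carry over directly: smoothness and simple connectedness of $\Gamma$, the bound $\left|\delta/\kappa^2\right|<1$, and $\kappa^2$ not being an interior resonance are precisely those under which Lemma \ref{lemma:EtM_efie} holds, and the injectivity argument in Lemma \ref{Leminjectivity} needs only $\kappa>0$, $\varepsilon>0$.

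For the formulation $\wLambdaeps\SLO:\Htdiv\rightarrow\Htdiv$ I would argue in the same way, first establishing that it too is a second-kind Fredholm operator. The cleanest route is to use the factorizations recorded earlier in the excerpt, namely $\SLO\vecv{V}_{\varepsilon}$ and the relation $\vecv{V}_{\varepsilon}=\pmb\Lambda_{\varepsilon}^{-1}$ modulo the rotation $(\cdot\times\pmb\nu)$, to transfer the second-kind structure of $\vecv{V}_{\varepsilon}\vecv{T}_{\kappa}$ to $\wLambdaeps\SLO$; alternatively, one repeats the symbol computation of Lemma \ref{lemma:EtM_efie} for $\sigma_{\wLambdaeps\SLO}$, which differs only by placement of the $\mathbb{J}$ factor and yields an analogous $\vecv{I}+\text{compact}$ decomposition. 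Injectivity of $\wLambdaeps\SLO$ is exactly what Lemma \ref{Leminjectivity} proves in detail through the eigenbasis expansion of $\va$ and the sign of $\Re\big((1-\lambda_i/k_{\varepsilon}^2)^{\pm1/2}\big)$. The Fredholm alternative then closes the argument.

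I expect the main obstacle to be purely expository rather than mathematical: making precise that the scalar prefactor and the rotation operator $(\cdot\times\pmb\nu)$, which is an isomorphism between $\Htdiv$ and $\Htcurl$, do not disturb the Fredholm index or injectivity when passing between the $\vecv{V}_{\varepsilon}\vecv{T}_{\kappa}$ and $\wLambdaeps\SLO$ formulations. One must verify that the compactness established on the relevant trace space is genuinely compactness as an operator on $\Htdiv$ (and not merely at the level of principal symbols), so I would be careful to note that the symbolic calculus arguments of Lemma \ref{lemma:EtM_efie}, via \cite[Theorem 6.4]{shubin1987pseudodifferential} and the mapping properties of $\Deltag^{-1}$, indeed deliver honest compact operators on the function space, after which both invertibility statements follow at once from the Fredholm alternative.
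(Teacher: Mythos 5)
Your proposal is correct and follows exactly the paper's own argument: the paper's proof of Proposition \ref{EFIEU} is a one-line invocation of Lemma \ref{lemma:EtM_efie} (second-kind Fredholm structure) together with Lemma \ref{Leminjectivity} (injectivity) via the Fredholm alternative. The additional care you take regarding the scalar prefactor, the rotation $(\cdot\times\pmb\nu)$, and the passage from symbol-level statements to honest compact operators is a welcome elaboration of details the paper leaves implicit, but it does not change the route.
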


\begin{proof} Using Lemmas \ref{lemma:EtM_efie} and \ref{Leminjectivity}, we deduce the existence of a solution to the aforementioned regularised EFIEs from its uniqueness through a Fredholm alternative argument. 
\end{proof}

The reader might now be wondering about the implementation of the OSRC operators we have presented in a Boundary Element Method context. Before answering this question, if we look at $\pmb{\Lambda}_{1,\varepsilon}$, we anticipate that the need to take the square root of an operator reduces the possibility of having a fast discrete implementation. In \cite{el2014approximate},
El Bouajaji et al. face this issue by computing an appropriate Padé approximation of the square root operator and applying
this to $\pmb{\Lambda}_{1,\varepsilon}$. In the following section, we explain some details about the Padé approximants of both
$\wLambdaeps$ and $\vecv{V}_{\varepsilon}$, and then we evaluate how this could affect the spectral properties of said operators.

\subsection{Padé approximants}
A key point is to propose a robust local representation of the square-root operator and its inverse involved in \eqref{eq:MtE_1st_app} and \eqref{eq:EtM_1st_app} respectively.
To this end, we use complex rational Pad\'e approximants with a rotating branch-cut technique of angle $\theta_1$ \cite{milinazzo1997rational}: for $z \in \mathbb{C}$, one has
\begin{equation}
\label{eq:pade_app}
(1+z)^{1/2} \approx e^{i\theta_1/2} R_{L_1}(e^{-i\theta_1}(1+z)-1) = C_0 + \sum_{\ell=1}^{L_1} \dfrac{A_{\ell}  z}{1+ B_{\ell}z}  = F_0-\sum_{\ell=1}^{L_1} \dfrac{A_{\ell}  }{B_{\ell}(1+ B_{\ell}z)}, 
\end{equation}
where $R_{L_1}$ is the standard real-valued Pad\'e approximation of order $L_1$
\[
(1+z)^{1/2} \approx R_{L_1} (z)=  1+ \sum_{\ell=1}^{L_1} \dfrac{a_{\ell}  z}{1+ b_{\ell}z},
\]
with the coefficients
\[
a_{\ell}=\frac{2}{2L+1}\sin^2\left(\frac{\ell \pi}{2L_1+1}\right), \quad b_{\ell}=\cos^2\left(\frac{\ell \pi}{2L_1+1}\right), \ 1 \leq \ell \leq L_1.  
\]
The complex constants are given by 
\[
A_{\ell}=\frac{e^{-i \theta_1/2} a_{\ell}}{(1+b_{\ell} (e^{-i \theta_1}-1))^2}, \quad B_{\ell}=\frac{-e^{-i \theta_1} b_{\ell}}{1 + b_{\ell} (e^{-i \theta_1}-1)}, \ 1 \leq \ell \leq L_1,
\]
\[
C_0=e^{i \theta_p/2}R_{L_1}(e^{-i \theta_1}-1), \quad F_0= C_0 + \sum_{\ell=1}^{L_1}  \frac{A_{\ell}}{B_{\ell}}.
\]
The efficiency of these Padé complex approximants for the local representation of $(1+z)^{1/2}$ has been proved numerically in many previous works
(e.g.~\cite{AntoineDarbasLuCMAME}). By choosing a branch cut in the negative half-space, we get an appropriate representation of the modes, and in particular the evanescent ones (corresponding to the region $\{z<-1 | \Im(z)=0\}$).

Such a stable rational approximation for the function $(1+z)^{-1/2}$ is also needed. A classical
approach consists of using continued fractions~\cite{cockburn1988maxwell}. Using the fixed point and residue theorems, the following rational approximation is given \cite{CDLLJCP2017}
\begin{equation}
\label{Isqrt_real}
(1+z)^{-1/2} \approx \  R^{inv}_{L_2}(z) =  \sum_{\ell=0}^{L_2-1} \dfrac{c_{\ell}}{d_{\ell}+z},
\end{equation}
with 
\[
c_{\ell}=\frac{1+\tan^2(\frac{\pi}{2L_2} (\frac{1}{2}+\ell))}{L_2}, \quad d_{\ell}=1+\tan^2 \Big(\frac{\pi}{2L_2} \Big(\frac{1}{2}+\ell\Big)\Big), \ 0 \leq \ell  \leq L_2-1.
\]
These real-valued coefficients correspond to a Pad\'e approximation of the function $(1+z)^{-1/2}$ with branch cut along the negative real axis $\{z<-1 | \Im(z)=0\}$. 
A way to modify the principal determination proposed in \cite{milinazzo1997rational} of the function is to apply a rotation of the branch cut with an angle $\theta_2$ 
\[
(1+z)^{-1/2}  = e^{i\theta_2/2} ( e^{i\theta_2} (1+z))^{-1/2} = e^{i\theta_2/2} (1+[e^{i\theta_2}(1+z)-1])^{-1/2}.
\]
Using~\eqref{Isqrt_real}, an approximation for the inverse square root is given by
\begin{equation}
\label{Isqrt}
(1+z)^{-1/2}  \approx e^{i\theta_2/2}  R^{inv}_{L_2}(e^{i\theta_2} (1+z)-1) =  \sum_{\ell=0}^{L_2-1} \dfrac{R_{\ell}}{S_{\ell}+z}, 
\end{equation}
where $R_{\ell}=e^{i \theta_2/2} c_{\ell}$ and $S_{\ell}= 1- e^{i \theta_2}+d_{\ell} e^{i \theta_2}, 0 \leq \ell  \leq L_2-1$.

Applying \eqref{eq:pade_app} to approximate $\pmb{\Lambda}_{1,\varepsilon} = \left(\vecv{I}+\frac{\vDeltag}{\kappa_{\varepsilon}^2}\right)^{1/2}$, we obtain the operator 

\begin{equation}\label{eq:pade_approximant}
\tilde{\pmb{\Lambda}}_{1,\varepsilon} = \left(\vecv{I}R_{0}-\sum_{j=1}^{\text{N}_{p}}\frac{A_{j}}{B_{j}} \left(\vecv{I}+B_{j}\frac{\vDeltag}{\kappa_{\varepsilon}^2}\right)^{-1}\right).
\end{equation}

To simplify notation we introduce $\pmb{\Pi}_{j}:=\vecv{I}+B_{j}\frac{\vDeltag}{\kappa_{\varepsilon}^2}$,  and by substituting $\pmb{\Lambda}_{1,\varepsilon}$ with $\tilde{\pmb{\Lambda}}_{1,\varepsilon}$ in the MtE operator $\vecv{V}_{\varepsilon}$, we obtain the Pad\'{e} approximate MtE operator:

\begin{equation}\label{eq:laplace_app1}
\begin{split}
 \tilde{\vecv{V}}_{\varepsilon, \text{N}_p}:=\pmb{\Lambda}_{2, \varepsilon}^{-1}\left(\vecv{I}R_{0}-\sum_{j=1}^{\text{N}_{p}}\frac{A_{j}}{B_{j}} \pmb{\Pi}_{j}^{-1}\right).
\end{split}
\end{equation}

On the other hand, applying \eqref{Isqrt} to approximate $\pmb{\Lambda}_{1,\varepsilon}^{-1} = \left(\vecv{I}+\frac{\vDeltag}{\kappa_{\varepsilon}^2}\right)^{-1/2}$, we obtain the operator

\begin{equation}\label{eq:pade_approximant2}
\tilde{\pmb{\Lambda}}_{1,\varepsilon}^{-1} =  \sum_{\ell=0}^{L_2-1} c_{\ell}\left(d_{\ell}\vecv{I}+\frac{\vDeltag}{\kappa_{\varepsilon}^2}\right)^{-1},
\end{equation}

\noindent and therefore, the approximate EtM operator reads

\begin{equation}\label{eq:laplace_app2}
\begin{split}
 \tilde{\wLambda}_{\varepsilon, \text{N}_p}:=\sum_{\ell=0}^{L_2-1} c_{\ell}\left(d_{\ell}\vecv{I}+\frac{\vDeltag}{\kappa_{\varepsilon}^2}\right)^{-1}\pmb{\Lambda}_{2, \varepsilon}(\pmb\nu\times\cdot).
\end{split}
\end{equation}

In the next section, we study the spectrum of the EFIE and the properties that make this formulation ill-conditioned under a dense discretisation or a high-frequency regime, and how the preconditioners we know help to modify this behavior, for which we focus our efforts on a canonical smooth, closed surface like the unit sphere.

\subsection{Spectral analysis}\label{sect:spectral_analysis}

Consider a sphere $S_{R}$ with radius $R$. Denoting by $Y_{m}^{n}$ the spherical harmonics of order $m$, with $m\geq 1$, and $|n|\leq m$, the family $(\cnabla{S_{R}}Y_{m}^{n}, \vcurl_{S_{R}}Y_{m}^{n})$ is a basis of eigenvectors of $\vecv{T}_{\kappa}$ on the sphere, whose eigenvalues $(t_{m}^{+}, t_{m}^{-})$ are given by \cite{kress1985minimizing}

\begin{equation}
	  \begin{cases}
		\vecv{T}_{\kappa}\left(\cnabla{S_{R}} Y_{m}^{n}\right) &= t^{-}_{m}\cnabla{S_{R}} Y_{m}^{n} \;\;\;\;= \;\xi_{m}^{(1)}(\kappa R)\psi_{m}(\kappa R)\cnabla{S_{R}}Y_{m}^{n} \\
	\vecv{T}_{\kappa}\left(\vcurl_{S_{R}} Y_{m}^{n}\right) &= t^{+}_{m}\vcurl_{S_{R}} Y_{m}^{n} = \;\xi_{m}'^{(1)}(\kappa R)\psi_{m}'(\kappa R)\vcurl_{S_{R}}Y_{m}^{n}.
	  \end{cases}       
  \end{equation}

Therefore, the corresponding eigenvalues of the EFIO are given by

\begin{equation}\label{eq:efie_eigs}
    \begin{cases}
      \SLO\left(\cnabla{S_{R}} Y_{m}^{n}\right) &= t^{-}_{m}\vcurl_{S_{R}} Y_{m}^{n}\\
  \SLO\left(\vcurl_{S_{R}} Y_{m}^{n}\right) &= -t^{+}_{m}\cnabla{S_{R}} Y_{m}^{n}.
    \end{cases}       
\end{equation}
The functions $\xi_{m}^{(1)}(x) = x h_{m}^{(1)}(x)$ and $\psi_{m}(x) = xj_{m}(x)$ are, respectively, the Ricatti-Hankel and Ricatti-Bessel functions of order m. 
By plotting these eigenvalues, in Figures \ref{fig:efie_eigs1} and \ref{fig:efie_eigs2}, we observe that the eigenvalues $t_{m}^{+}$ accumulate at infinity, whereas
the eigenvalues $t_{m}^{-}$ accumulate at zero when $h\rightarrow 0$, reflecting the dense discretization breakdown of the EFIE.

\begin{figure}[ht!]
\centering
\begin{minipage}{0.45\linewidth}
\includegraphics[width=\linewidth]{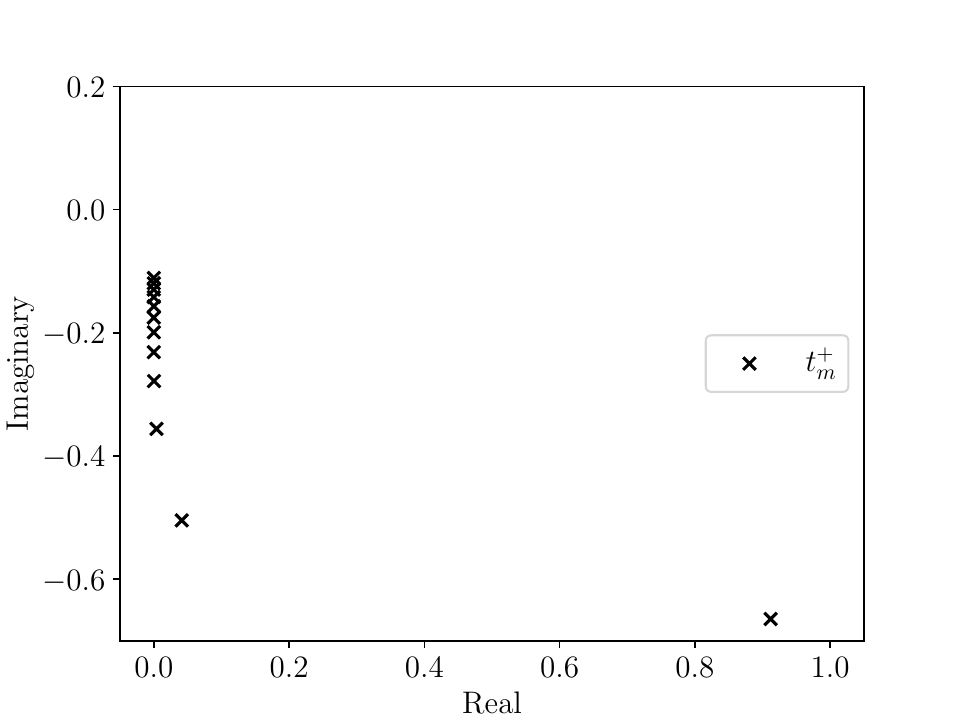}
\end{minipage}
\begin{minipage}{0.45\linewidth}
\includegraphics[width=\linewidth]{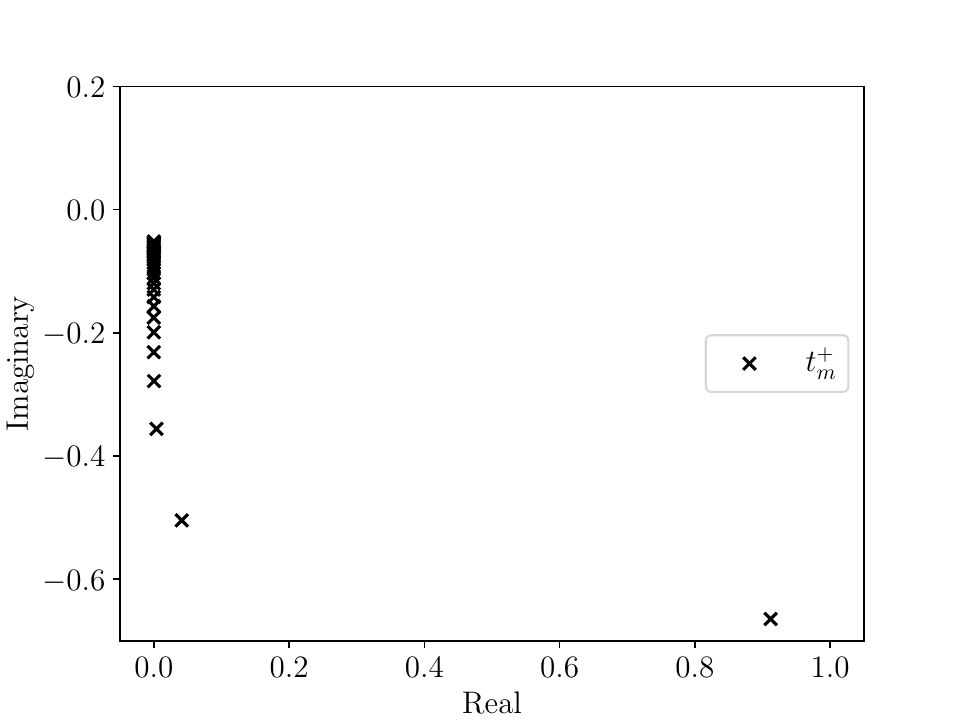}
\end{minipage} \\
\caption{$t_{m}^{+}$ for $\kappa = \pi$, for 10 points per wavelength (left) and 20 points per wavelength (right) on a unit sphere.}
\label{fig:efie_eigs1}
\end{figure}

\begin{figure}[ht!]
\centering
\begin{minipage}{0.45\linewidth}
\includegraphics[width=\linewidth]{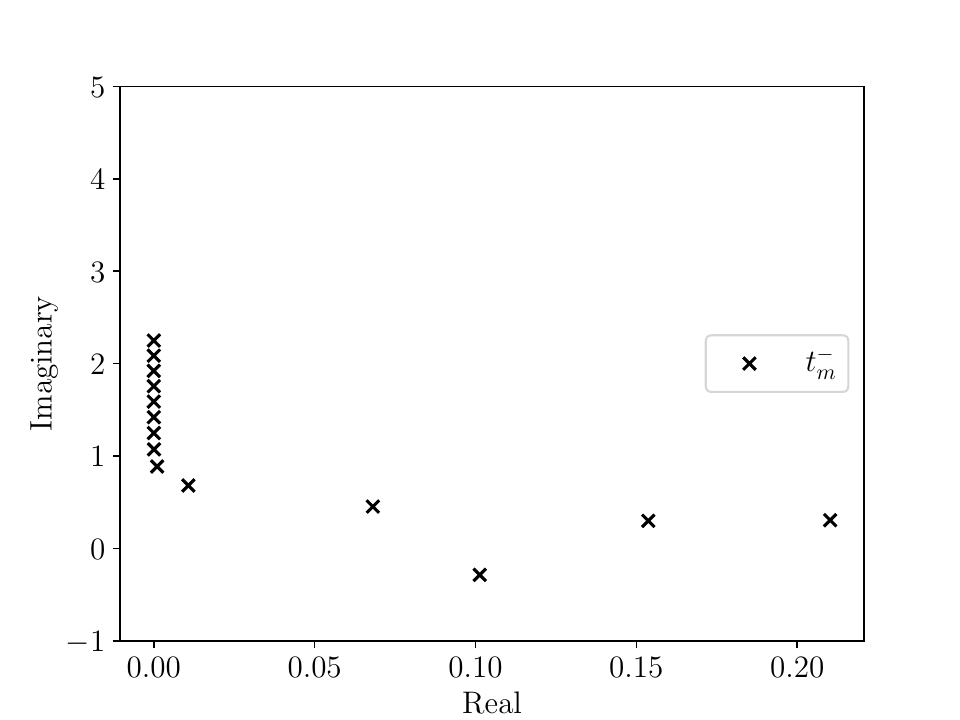}
\end{minipage}
\begin{minipage}{0.45\linewidth}
\includegraphics[width=\linewidth]{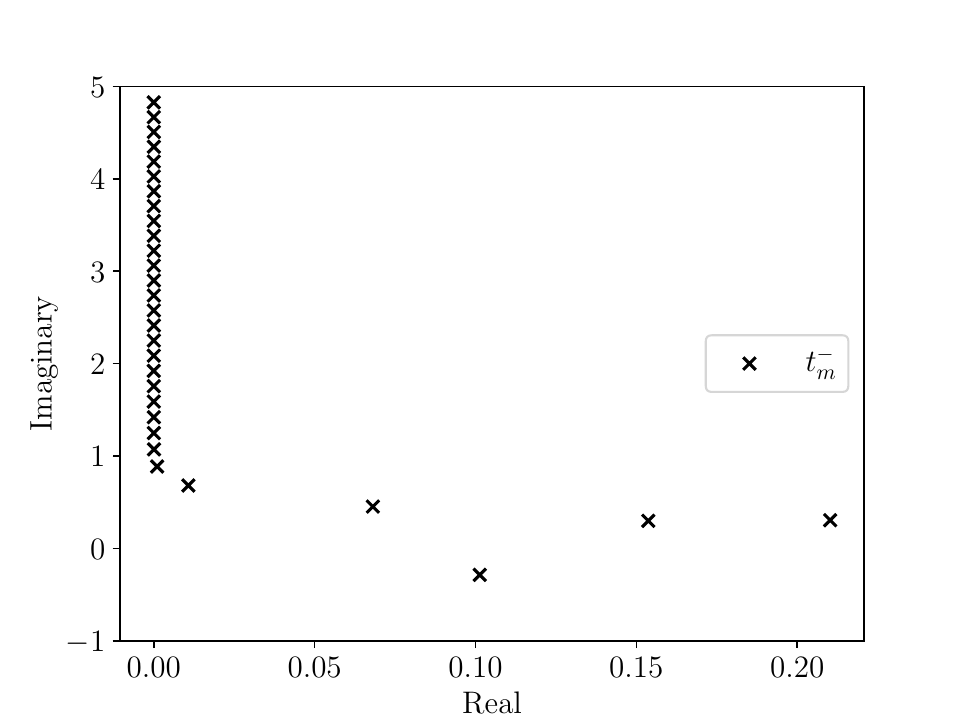}
\end{minipage} \\
\caption{$t_{m}^{-}$ for $\kappa = \pi$, for 10 points per wavelength (left) and 20 points per wavelength (right) on a unit sphere.}
\label{fig:efie_eigs2}
\end{figure}

To have a reference, we figure out the eigenvalues for the EFIE preconditioned using the Calder\'on Preconditioner:
\begin{equation}\label{eq:cald_efie_eigs}
\begin{cases}
\SLO^2\; \cnabla{S_{R}} Y_{m}^{n} &= -t^{-}_{m}t^{+}_{m}\cnabla{S_{R}} Y_{m}^{n} \\
\SLO^2\;\vcurl_{S_{R}} Y_{m}^{n} &= -t^{+}_{m}t^{-}_{m}\vcurl_{S_{R}} Y_{m}^{n}
\end{cases}       
\end{equation}
In that case, the product $-t^{+}_{m}t^{-}_{m}$ prevents the EFIE singularities. Figure \ref{fig:calderon_spectrum} shows the eigenvalues associated with the Calderon-preconditioned EFIE. We observe an accumulation point around -0.25, as is usually expected for this formulation.

\begin{figure}
\centering
\includegraphics[width=8cm]{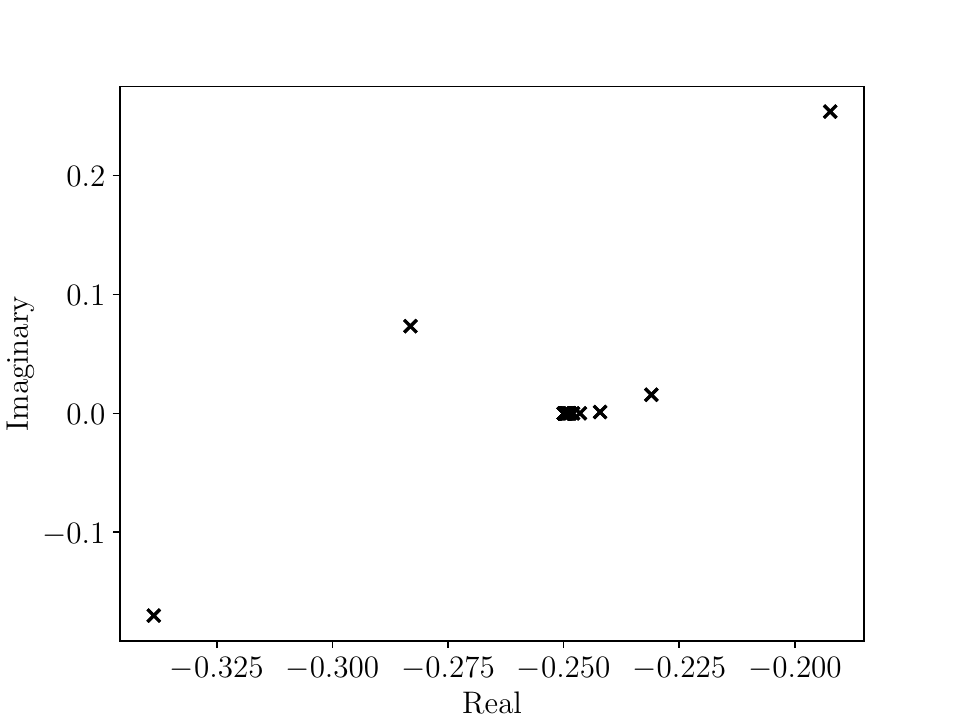}
\caption{Eigenvalues of $\SLO^2$.}
\label{fig:calderon_spectrum}
\end{figure}
\pagebreak
\subsubsection{OSRC-EFIE eigenvalues}\label{sect:osrc_evs}

To find the eigenvalues of $\pmb\Lambda_{\varepsilon}$ and $\vecv{V}_{\varepsilon}$, we start by reminding that

\begin{equation}\label{eq:lb-eig}
	\begin{cases}
	  -\vDeltas\cnabla{S_{R}} Y_{m}^{n} &= \lambda_{m}\cnabla{S_{R}}Y_{m}^{n}\\
	  -\vDeltas\vcurl_{S_{R}} Y_{m}^{n} &= \lambda_{m} \vcurl_{S_{R}} Y_{m}^{n},
	\end{cases}       
\end{equation}
with $\lambda_{m} = \frac{m(m+1)}{2}$ \cite[Theorem 2.4.8]{nedelec2001acoustic}. As before, we consider the Helmholtz decomposition $(\psi_{\vecv{a}}, \varphi_{\vecv{a}}) \in H^{1/2}(\Gamma) \times H^{3/2}(\Gamma)$ of the density $\vecv{a} \in \Htcurl$. Notice that

\begin{align*}
	\pmb\Lambda_{2,\varepsilon}\vcurlg \psi_{\vecv{a}}  &= \left(\vecv{I}-\frac{1}{\kappa_{\varepsilon}^2}\vcurlg\scurl\right)\;\vcurlg \psi_{\vecv{a}}\\
	&= \left(\vecv{I}+\frac{\vDeltag-\vnablag\divg}{\kappa^2_{\varepsilon}}\right)\;\vcurlg \psi_{\vecv{a}}\\
	&= \left(\vecv{I}+\frac{\vDeltag}{\kappa^2_{\varepsilon}}\right)\vcurlg \psi_{\vecv{a}},
\end{align*}
and
\begin{align*}
	\pmb\Lambda_{2,\varepsilon}\vnablag\varphi_{\vecv{a}}  &= \left(\vecv{I}-\frac{1}{\kappa_{\varepsilon}^2}\vcurlg\scurl\right)\;\vnablag \varphi_{\vecv{a}}\\
	&= \vnablag \varphi_{\vecv{a}}.\\
\end{align*}
Summarizing, we have 
\begin{align}
	\pmb\Lambda_{2,\varepsilon}\vcurlg \psi_{\vecv{a}} &= \pmb{\Lambda}_{1,\varepsilon}^2\;\vcurlg \psi_{\vecv{a}},\\
	\pmb\Lambda_{2,\varepsilon}\vnablag \varphi_{\vecv{a}} &= \vnablag \varphi_{\vecv{a}}.
\end{align}

Now, in the sphere (we replace $\Gamma$ by $S_{R}$), we consider $\vecv{a} = \cnabla{S_{R}} Y_{m}^{n} + \vcurl_{S_{R}} Y_{m}^{n}$. From equation \eqref{eq:lb-eig} we have

\begin{align*}
	\vDeltas\cnabla{S_{R}} Y_{m}^{n} &= -\lambda_{m}\cnabla{S_{R}}Y_{m}^{n} \\
	\left(1+\frac{\vDeltas}{\kappa^2_{\varepsilon}}\right)\cnabla{S_{R}} Y_{m}^{n} &= \left(1-\frac{\lambda_{m}}{\kappa^2_{\varepsilon}}\right)\cnabla{S_{R}}Y_{m}^{n}\\
	\left(1+\frac{\vDeltas}{\kappa^2_{\varepsilon}}\right)^{1/2}\cnabla{S_{R}} Y_{m}^{n} &= \left(1-\frac{\lambda_{m}}{\kappa^2_{\varepsilon}}\right)^{1/2}\cnabla{S_{R}}Y_{m}^{n}\\
	\pmb\Lambda_{1,\varepsilon}\cnabla{S_{R}} Y_{m}^{n} & = \left(1-\frac{\lambda_{m}}{\kappa^2_{\varepsilon}}\right)^{1/2}\cnabla{S_{R}}Y_{m}^{n}. 
\end{align*}
We can do the same for $\vcurl_{S_{R}} Y_{m}^{n}$ 
\begin{align*}
	\pmb\Lambda_{1,\varepsilon}\cnabla{S_{R}} Y_{m}^{n} & = \left(1-\frac{\lambda_{m}}{\kappa^2_{\varepsilon}}\right)^{1/2}\cnabla{S_{R}}Y_{m}^{n},\\
	\pmb\Lambda_{2,\varepsilon}\cnabla{S_{R}} Y_{m}^{n} &= \cnabla{S_{R}} Y_{m}^{n},\\
	\pmb\Lambda_{1,\varepsilon}\vcurl_{S_{R}} Y_{m}^{n} &= \left(1-\frac{\lambda_{m}}{\kappa^2_{\varepsilon}}\right)^{1/2}\vcurl_{S_{R}}Y_{m}^{n},\\
	\pmb\Lambda_{2,\varepsilon}\vcurl_{S_{R}} Y_{m}^{n} &= \left(1-\frac{\lambda_{m}}{\kappa^2_{\varepsilon}}\right)\;\vcurl_{S_{R}} Y_{m}^{n}.\\
\end{align*}
We deduce 
\begin{align*}
\pmb\Lambda_{1,\varepsilon}^{-1}\pmb\Lambda_{2,\varepsilon}\cnabla{S_{R}} Y_{m}^{n}&=\underbrace{\left(1-\frac{\lambda_{m}}{\kappa^2_{\varepsilon}}\right)^{-1/2}}_{L_m^{-}}\;\cnabla{S_{R}} Y_{m}^{n}\\
\pmb\Lambda_{1,\varepsilon}^{-1}\pmb\Lambda_{2,\varepsilon}\vcurl_{S_{R}} Y_{m}^{n}&=\underbrace{\left(1-\frac{\lambda_{m}}{\kappa^2_{\varepsilon}}\right)^{1/2}}_{L_m^{+}}\;\vcurl_{S_{R}} Y_{m}^{n}.
\end{align*}
and 
\begin{align*}
\pmb\Lambda_{2,\varepsilon}^{-1}\pmb\Lambda_{1,\varepsilon}\cnabla{S_{R}} Y_{m}^{n}&=L_m^{+}\;\cnabla{S_{R}} Y_{m}^{n},\\
\pmb\Lambda_{2,\varepsilon}^{-1}\pmb\Lambda_{1,\varepsilon}\vcurl_{S_{R}} Y_{m}^{n}&=L_m^{-}\;\vcurl_{S_{R}} Y_{m}^{n}.
\end{align*}

We plot Figure \ref{fig:hodge_laplacian_spectrum} the spectrum associated with $L^{\pm} = \left(1-\dfrac{\lambda_{m}}{\kappa^2_{\varepsilon}}\right)^{\pm 1/2}$. We see that, as in the case of $t_{m}^{\pm}$, there are accumulation points at both infinity and zero.

\begin{figure}[ht!]
\centering
\begin{minipage}{0.45\linewidth}
\includegraphics[width=\linewidth]{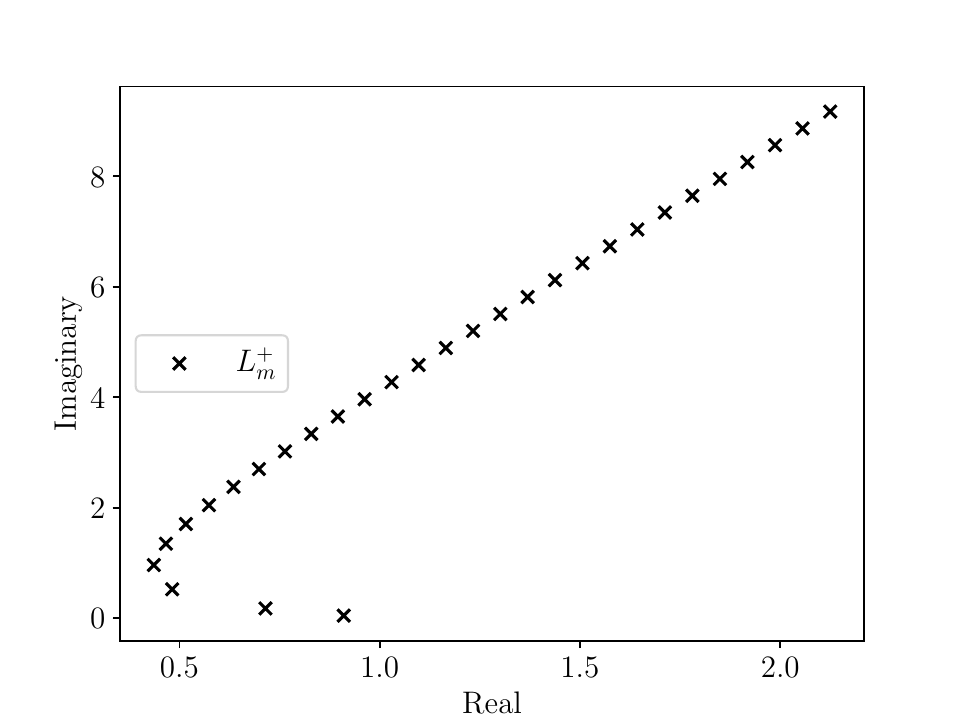}
\end{minipage}
\begin{minipage}{0.45\linewidth}
\includegraphics[width=\linewidth]{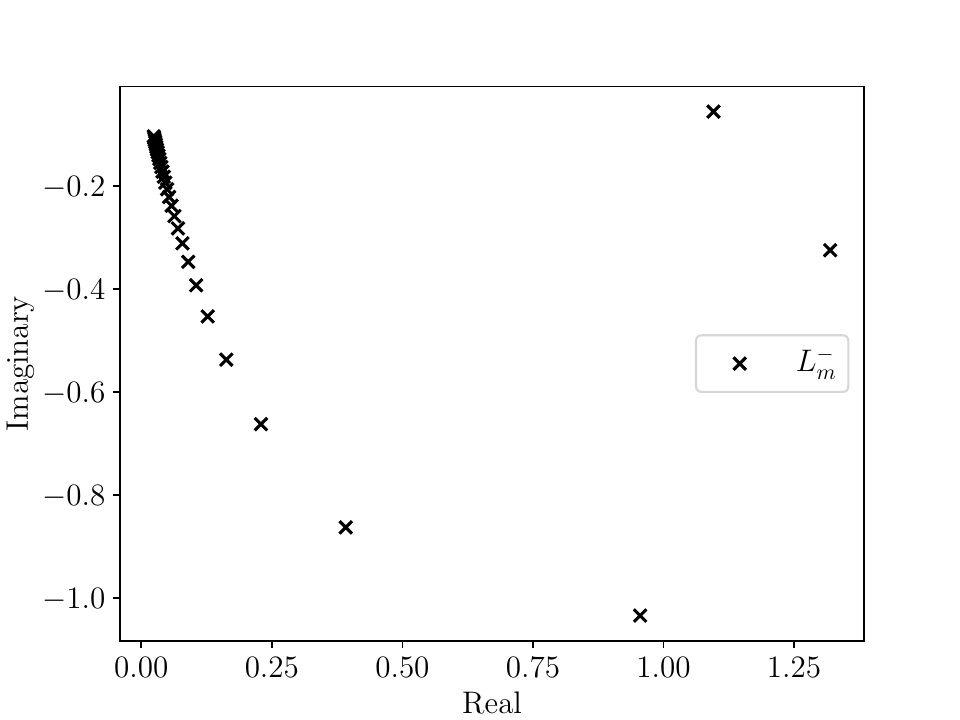}
\end{minipage} \\
\caption{Eigenvalues associated to $L^{\pm}=\left(1-\frac{\lambda_{m}}{\kappa^2_{\varepsilon}}\right)^{\pm 1/2}$.}
\label{fig:hodge_laplacian_spectrum}
\end{figure}

If we compute the spectrum of the the OSRC-preconditioned EFIE, a satisfying result would be obtaining eigenvalues of $t_m^{\pm}L_m^{\mp}$. We 
summarize the action of the OSRC operators we have defined in \eqref{eq:EtM_1st_app} and \eqref{eq:MtE_1st_app}:
\begin{align*}
    \pmb\Lambda_{\varepsilon}\cnabla{S_{R}} Y_{m}^{n} &= \pmb\Lambda_{1,\varepsilon}^{-1}\pmb\Lambda_{2,\varepsilon}\left(\pmb\nu\times\cnabla{S_{R}} Y_{m}^{n}\right)\\
        &=-\left(1-\frac{\lambda_{m}}{\kappa^2_{\varepsilon}}\right)^{-1/2}\;\vcurl_{S_{R}} Y_{m}^{n},\\
    \pmb\Lambda_{\varepsilon}\vcurl_{S_{R}} Y_{m}^{n} &= \pmb\Lambda_{1,\varepsilon}^{-1}\pmb\Lambda_{2,\varepsilon}\left(\pmb\nu\times\vcurl_{S_{R}} Y_{m}^{n}\right)\\
        &=\left(1-\frac{\lambda_{m}}{\kappa^2_{\varepsilon}}\right)^{1/2}\; \cnabla{S_{R}}Y_{m}^{n},
\end{align*}
and
\begin{align*}
	\vecv{V}_{\varepsilon}\cnabla{S_{R}} Y_{m}^{n} &= \pmb\Lambda_{2,\varepsilon}^{-1}\pmb\Lambda_{1,\varepsilon}
 \cnabla{S_{R}} Y_{m}^{n}\\
	&=\left(1-\frac{\lambda_{m}}{\kappa^2_{\varepsilon}}\right)^{1/2}\;\cnabla{S_{R}} Y_{m}^{n},\\
	\vecv{V}_{\varepsilon}\vcurl_{S_{R}} Y_{m}^{n} &= \pmb\Lambda_{2,\varepsilon}^{-1}\pmb\Lambda_{1,\varepsilon}\vcurl_{S_{R}} Y_{m}^{n}\\
	&=\left(1-\frac{\lambda_{m}}{\kappa^2_{\varepsilon}}\right)^{-1/2}\;\vcurl_{S_{R}} Y_{m}^{n}.
\end{align*}
We conclude
\begin{equation}\label{eq:osrc-eigenvalues1}
    \begin{cases}
	\pmb\Lambda_{\varepsilon}\SLO\;\cnabla{S_{R}} Y_{m}^{n} &=  L_{m}^{+}t_{m}^{-}\cnabla{S_{R}} Y_{m}^{n}\\
	\pmb\Lambda_{\varepsilon}\SLO\;\vcurl_{S_{R}} Y_{m}^{n} &= L_{m}^{-}t_{m}^{+}\vcurl_{S_{R}} Y_{m}^{n}.
    \end{cases}      
\end{equation}

\begin{equation}\label{eq:osrc-eigenvalues2}
    \begin{cases}
    \vecv{V}_{\varepsilon}\vecv{T}_{k}\;\cnabla{S_{R}} Y_{m}^{n} &=  L_{m}^{+}t_{m}^{-}\cnabla{S_{R}} Y_{m}^{n}\\
    \vecv{V}_{\varepsilon}\vecv{T}_{k}\;\vcurl_{S_{R}} Y_{m}^{n} &= L_{m}^{-}t_{m}^{+}\vcurl_{S_{R}} Y_{m}^{n} 
    \end{cases}      
\end{equation}

\begin{Rem}
We have evaluated left preconditioners, we can also evaluate the action of right preconditioners:
\begin{equation}\label{eq:osrc-eigenvalues1-right}
    \begin{cases}
    \SLO\left(\pmb\Lambda_{\varepsilon}\cnabla{S_{R}} Y_{m}^{n}\right) &= L_{m}^{-}t_{m}^{+} \cnabla{S_{R}}Y_{m}^{n}\\
    \SLO\left(\pmb\Lambda_{\varepsilon}\vcurl_{S_{R}} Y_{m}^{n}\right) &= L_{m}^{+}t_{m}^{-}\vcurl_{S_{R}} Y_{m}^{n}.
    \end{cases}      
\end{equation}

\begin{equation}\label{eq:osrc-eigenvalues2-right}
    \begin{cases}
    \vecv{T}_{\kappa}\vecv{V}_{\varepsilon} \cnabla{S_{R}} Y_{m}^{n} &= L_{m}^{+}t_{m}^{-}\cnabla{S_{R}}Y_{m}^{n}\\
    \vecv{T}_{\kappa}\vecv{V}_{\varepsilon}\vcurl_{S_{R}} Y_{m}^{n} &= L_{m}^{-}t_{m}^{+}\vcurl_{S_{R}} Y_{m}^{n}.
    \end{cases}      
\end{equation}
All of these are mappings from $\Htcurl$ to $\Htcurl$.

\end{Rem}

Figure \ref{fig:efie-osrc} shows the eigenvalue distribution of the EFIE after being preconditioned with any of the two OSRC operators. The eigenvalues group into two main clusters away from the origin.

\begin{figure}[ht!]
\centering
\includegraphics[width=8cm]{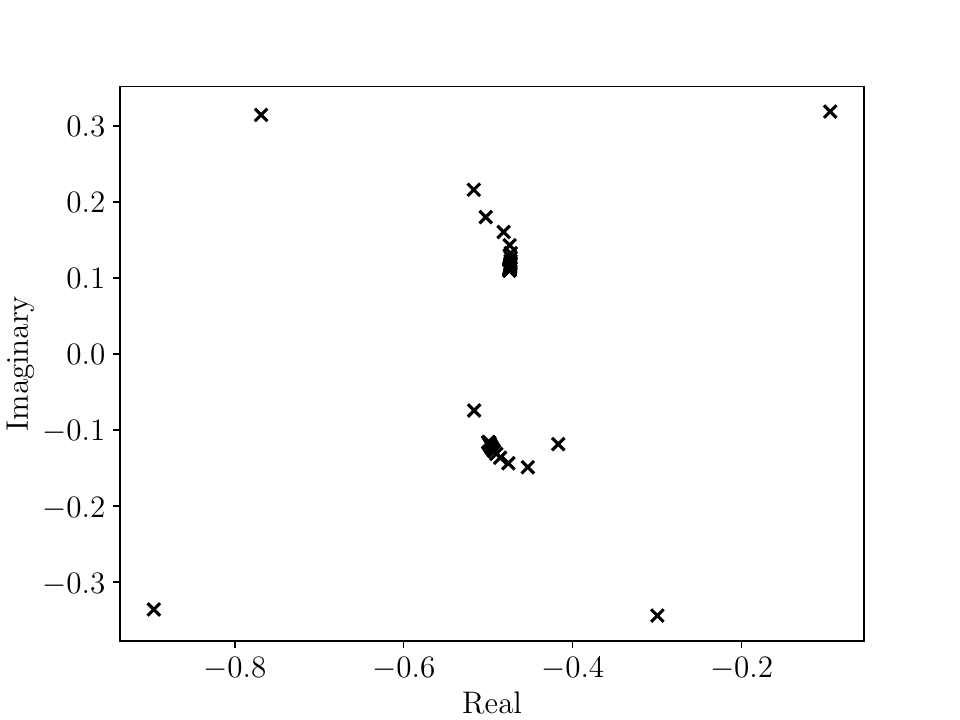}
\caption{Eigenvalues associated to $\pmb\Lambda_{\varepsilon}\SLO/\vecv{V}_{\varepsilon}\vecv{T}_{\kappa}$.}
\label{fig:efie-osrc}
\end{figure}

To understand how good the Pad\'e approximations of $\pmb\Lambda_{\varepsilon}\SLO$ and $\vecv{V}_{\varepsilon}\vecv{T}_{\kappa}$ are, we can calculate the eigenvalues for 
the Pad\'e approximations \eqref{eq:laplace_app1} and \eqref{eq:laplace_app2}. Depending on the approximation we are performing, we obtain two sets of eigenvalues:

\begin{align}\label{eq:fev3}
	\tilde{L}_{m,1}^{+} &= -\left[\sum_{\ell=0}^{L_2-1} c_{\ell}\left(d_{\ell}\vecv{I}+\frac{\vDeltag}{\kappa_{\varepsilon}^2}\right)^{-1}\right],\\
	\tilde{L}_{m,1}^{-} &=-\left[\sum_{\ell=0}^{L_2-1} c_{\ell}\left(d_{\ell}\vecv{I}+\frac{\vDeltag}{\kappa_{\varepsilon}^2}\right)^{-1}\right]^{-1},\\
	\tilde{L}_{m,-1}^{+} &=  \left[\vecv{I}R_{0}-\sum_{j=1}^{\text{N}_{p}}\frac{A_{j}}{B_{j}} \left(\vecv{I}+B_{j}\frac{\lambda_{m}}{\kappa_{\varepsilon}^2}\right)^{-1}\right]^{-1},\\
	\tilde{L}_{m,-1}^{-} &=\left[\vecv{I}R_{0}-\sum_{j=1}^{\text{N}_{p}}\frac{A_{j}}{B_{j}} \left(\vecv{I}+B_{j}\frac{\lambda_{m}}{\kappa_{\varepsilon}^2}\right)^{-1}\right].
\end{align}

Figure \ref{fig:convergence} shows eigenvalues $L_{m,1}^{\pm}$, $L_{m,-1}^{\pm}$ and $L_m^{\pm}$ for different Pad\'e expansions (5, 10 and 15). We see that for a Padé order of 15, the eigenvalues associated with both Padé approximations have achieved convergence.

\begin{figure}[ht!]
\centering
\begin{minipage}{0.45\linewidth}
\includegraphics[width=\linewidth]{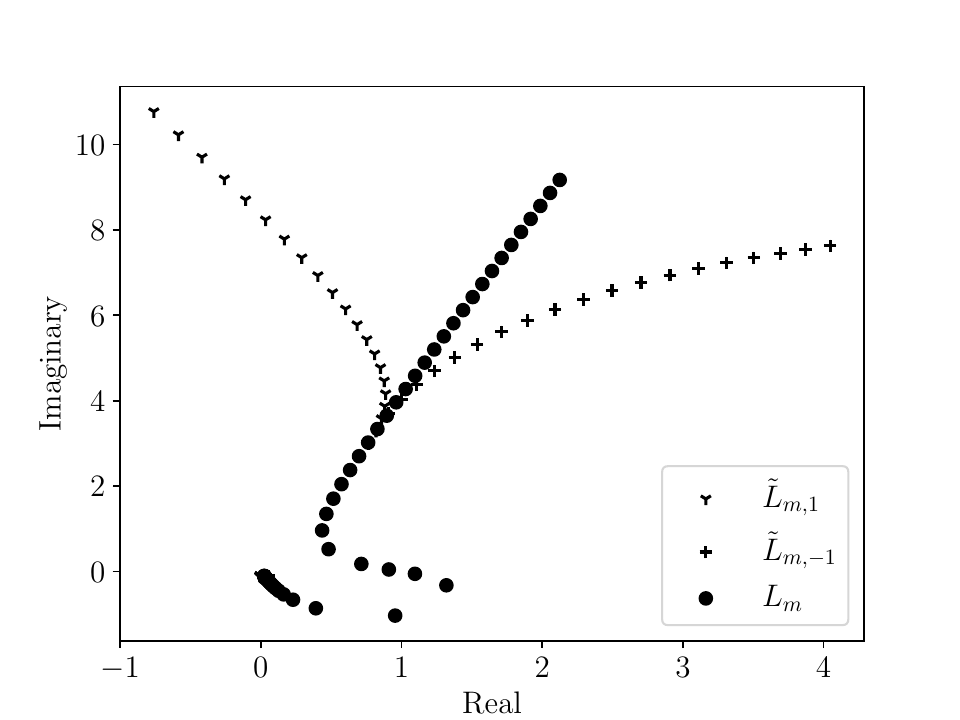}
\end{minipage}
\begin{minipage}{0.45\linewidth}
\includegraphics[width=\linewidth]{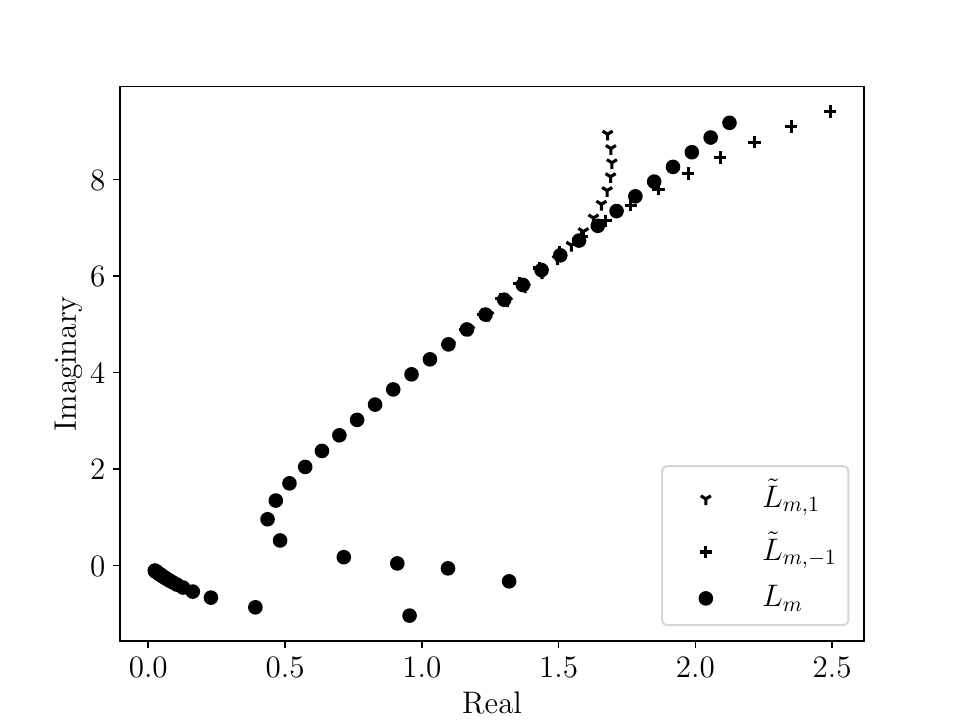}
\end{minipage} \\
\includegraphics[width=7.5cm]{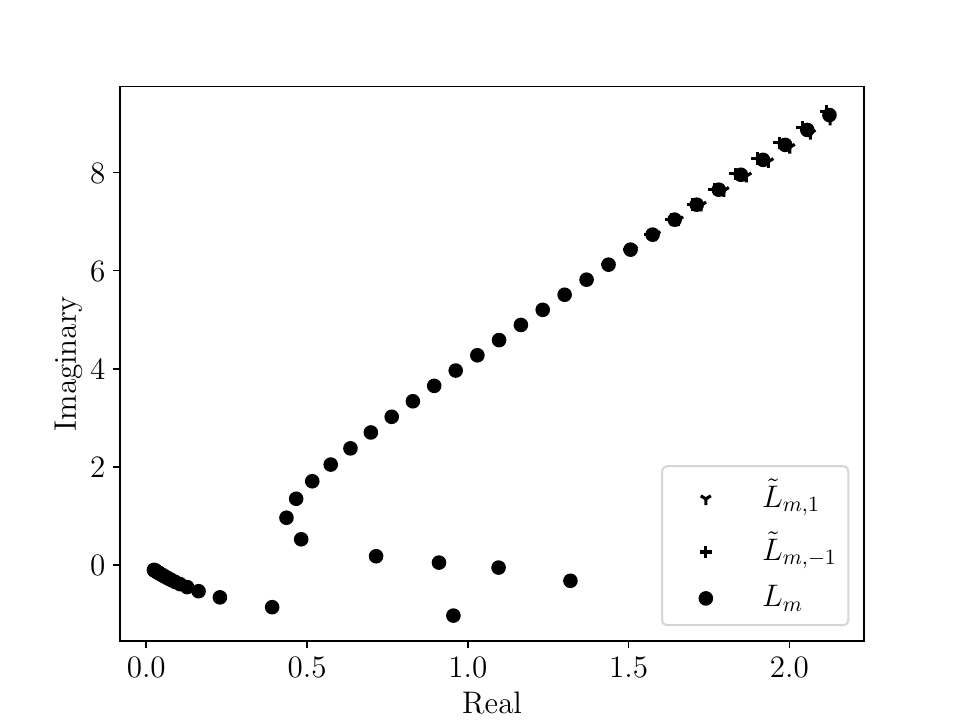}
\caption{Convergence of eigenvalues of $|L_{m,1}^{\pm}|$ and $|L_{m,-1}^{\pm}|$ to $|L_m^{\pm}|$ for orders of expansion 5, 10 and 15.}
\label{fig:convergence}
\end{figure}

Finally, in \cite{BetckeFierroPiccardo}, Betcke and Fierro-Piccardo have empirically proved that $\pmb\Lambda_{2,\varepsilon}^{-1}$ alone is a good enough choice of a preconditioner. If we repeat our calculations, we see that 
\begin{equation*}
	\begin{cases}
	\pmb\Lambda_{2, \varepsilon}^{-1}\vecv{T}_{\kappa}\cnabla{S_{R}} Y_{m}^{n} &= t_{m}^{-}\cnabla{S_{R}} Y_{m}^{n}\\
	\pmb\Lambda_{2, \varepsilon}^{-1}\vecv{T}_{\kappa}\vcurl_{S_{R}} Y_{m}^{n} &=(L_{m}^{-})^{2}t_{m}^{+}\vcurl_{S_{R}} Y_{m}^{n}.
	\end{cases}       
\end{equation*}

In principle, this result should not be good enough to obtain a well-conditioned formulation, since we only bound eigenvalues away from infinity, but we keep $t_{m}^{-}$, which clusters at zero. Figure \ref{fig:mte2_eigs}
shows that the new branch of eigenvalues $(L_{m}^{-})^{2}t_{m}^{+}$ presents a similar behavior to
$t_{m}^{-}$, which could also explain a smaller condition number, with the eigenvalues clustering around a point
that converges to zero when $h\rightarrow 0$. It is natural to conclude that this strategy is not immune to the effects of dense discretisation, but it shows better behaviour than the EFIE alone.

\begin{figure}[ht!]
	\centering
	\includegraphics[width=8cm]{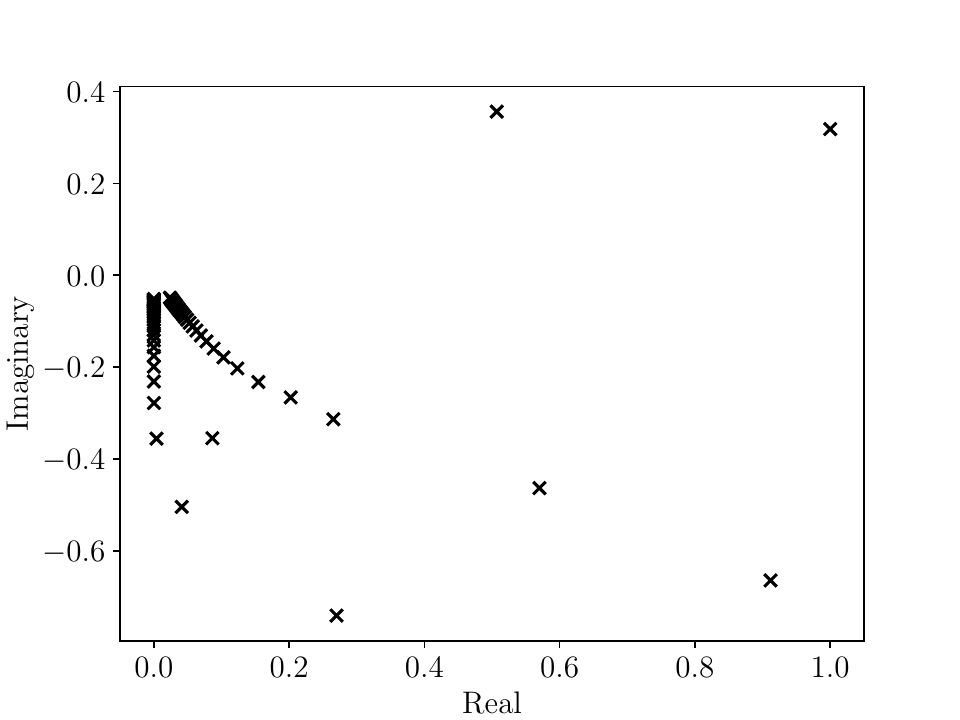}
	\caption{Eigenvalues associated to $\pmb\Lambda_{2,\varepsilon}^{-1}\SLO$.}
	\label{fig:mte2_eigs}
\end{figure}

\section{A discrete example: Boundary Elements Method implementation}\label{sect:bem_impl}

In \cite{BetckeFierroPiccardo} we find a discrete implementation for $\tilde{\vecv{V}}_{\varepsilon}$, so the aim in this section
is to provide a similar strategy for $\tilde{\pmb{\Lambda}}_{\varepsilon}$. To do this, we base our calculations on the variational 
formulation obtained in \cite{el2014approximate}, so using \eqref{Isqrt}, we have

\begin{align*}
\pmb{\Lambda}_{1,\varepsilon}^{-1} &\approx  \sum_{\ell=0}^{L_2-1} c_{\ell}\left(d_{\ell}\vecv{I}+\frac{\vDeltag}{\kappa_{\varepsilon}^2}\right)^{-1}\\
\pmb\Lambda_{2,\varepsilon} &= \left(\vecv{I}-\frac{1}{\kappa_{\varepsilon}^2}\vcurlg\scurl\right)
\end{align*}

As we have stated, the EtM operator ($\tilde{\pmb\Lambda}_{\varepsilon, \text{N}_p}$) can be used as an approximate boundary data map from $\tangential^{+}\vecv{e}$ to $\magnetic^{+}\vecv{e}$:

\begin{equation*}
    \magnetic^{+}\vecv{e}=\pmb{\Theta}\tilde{\pmb{\Lambda}}_{1, \varepsilon}^{-1}\pmb{\Lambda}_{2, \varepsilon}\pmb{\Theta}\tangential^{+}\vecv{e},
\end{equation*}

In an iterative solver context, this translates into the calculation of $\vecv{r} = \pmb{\Theta}\tilde{\pmb{\Lambda}}_{1, \varepsilon}^{-1}\vecv{b}$, where $\vecv{b} = \pmb{\Lambda}_{2, \varepsilon}\pmb{\Theta}\tangential\vecv{e}$, so we must focus on the calculation of $\tilde{\pmb{\Lambda}}_{1, \varepsilon}^{-1}\vecv{b}$, which means computing

\begin{align*}
\sum_{\ell=0}^{L_2-1} c_{\ell}\underbrace{\left(d_{\ell}\vecv{I}+\frac{\vDeltag}{\kappa_{\varepsilon}^2}\right)^{-1}\vecv{b}}_{\pmb\phi^{\ell}},
\end{align*}

Therefore, we must solve for each $\pmb\phi^{\ell}$:

\begin{equation*}
   \left(d_{\ell}\vecv{I}+\frac{\vDeltag}{\kappa_{\varepsilon}^2}\right)\pmb\phi^{\ell} = \vecv{b}
\end{equation*}

which we rewrite as

\begin{equation}\label{eq:pi_exp}
   \left[d_{\ell}\vecv{I} + \left(\vnablag \frac{1}{\kappa^{2}_{\varepsilon}}\divg - \vscurl \frac{1}{\kappa^{2}_{\varepsilon}}\scurl\right)\right]\pmb{\phi}^j = \vecv{b},
\end{equation}

\noindent for $\pmb{\phi}^\ell$. So, the weak formulation for \eqref{eq:pi_exp} (derived in \cite{el2014approximate}) can be derived as follows:

\begin{itemize}
    \item Multiply \eqref{eq:pi_exp} by $\vecv{w}^{\ell}$ for $\ell = 1\dots \text{N}_p$ and integrate over $\Gamma$ to obtain:
    
    $$d_{\ell}\int_{\Gamma} \pmb{\phi}^{\ell}\cdot \vecv{w}^{\ell} d\Gamma + \left(\int_{\Gamma}\vnablag \frac{1}{\kappa^{2}_{\varepsilon}}\divg\pmb{\phi}^{\ell}\cdot \vecv{w}^{\ell} - \vscurl \frac{1}{\kappa^{2}_{\varepsilon}}\scurl\pmb{\phi}^{\ell}\cdot \vecv{w}^{\ell} d\Gamma \right) = \int_{\Gamma} \vecv{r}\cdot \vecv{w}^{\ell} d\Gamma$$
    
    \item We add the constraint $\rho^{\ell} = \divg\pmb{\phi}^{\ell} \in H^{1/2}(\Gamma)$, so we multiply by $z \in H^{1/2}(\Gamma)$ and integrate by parts to obtain
    
    $$\int_{\Gamma}\rho^{\ell} z^{\ell} d\Gamma= \int_{\Gamma}\divg\pmb{\phi}^{\ell}z^{\ell} d\Gamma = -\int_{\Gamma}\pmb{\phi}^{\ell}\vnablag z^{\ell} d\Gamma$$

    \item Also,
    $$\int_{\Gamma}\vscurl \frac{1}{\kappa^{2}_{\varepsilon}}\scurl\pmb{\phi}^{\ell}\cdot \vecv{w}^{\ell} d\Gamma = \int_{\Gamma} \frac{1}{\kappa^{2}_{\varepsilon}}\scurl\pmb{\phi}^{\ell}\cdot \scurl\vecv{w}^{\ell} d\Gamma$$
\end{itemize}

Finally, the weak formulation reads: let $z \in H^{1/2}(\Gamma)$, and $\vecv{w}^{\ell}\in \vecv{H}_{\times}^{-\frac{1}{2}}(\scurl, \Gamma)$, 
find $\rho^\ell \in H^{1/2}(\Gamma)$ $\pmb{\phi}^\ell \in \vecv{H}_{\times}^{-\frac{1}{2}}(\scurl, \Gamma)$ such that

\begin{align}\label{eq:lambda1_wf}
\begin{cases}  
d_{\ell}\int_{\Gamma} \pmb{\phi}^{\ell}\cdot \vecv{w}^{\ell} d\Gamma +\left( \int_{\Gamma}\vnablag \rho^{\ell} \cdot \vecv{w}^{\ell}d\Gamma - \int_{\Gamma}\frac{1}{\kappa_{\varepsilon}^{2}} \scurl\; \pmb\phi^{\ell} \cdot \scurl\;\vecv{w}^{\ell}d\Gamma \right) = \int_{\Gamma} \vecv{r}\cdot \vecv{w}^{\ell} d\Gamma,\\
\int_{\Gamma} \kappa_{\varepsilon}^{2}\rho^{j} z^{\ell} d\Gamma +\int_{\Gamma}\pmb\phi^{\ell} \cdot \vnablag z^{\ell} d\Gamma = 0.
\end{cases} 
\end{align}

On the other hand, the integral operator associated with $\pmb\Lambda_{2,\varepsilon}$ is given by

\begin{align}\label{eq:lambda2_wf}
	\int_{\Gamma} \vecv{r} \cdot \vecv{v}d\Gamma- \int \frac{1}{\kappa_{\varepsilon}^{2}}\scurl \vecv{r}\cdot \scurl \vecv{v} \;d\Gamma, \;\; \vecv{r}, \vecv{v} \in \Htcurl.
\end{align}

Now that we have weak formulations for $\tilde{\vecv{V}}_{\varepsilon}$ and $\tilde{\pmb{\Lambda}}_{\varepsilon}$, it is left to find appropriate
discretisations for these operators, which we will show in the upcoming section.

\subsection{Weak and strong discrete formulations}\label{sect:wf_and_sf}

We start by mentioning that these implementations take place on a triangular meshing of $\Gamma$ and summarize the basis functions and acronyms we will refer to throughout this section. Most of the basis functions we use derive from the usual Rao-Wilton-Glisson (RWG) basis functions (Figure \ref{fig:rwg}), which have the following definition:

$$
{\displaystyle \varphi_{i}(\mathbf {r} )=\left\{{\begin{array}{ll}{\frac {1}{2A_{i}^{+}}}(\mathbf {r} -\mathbf {p} _{+})\quad &\mathrm {if\ \mathbf {r} \in \ T_{+}} \\-{\frac {1}{2A_{i}^{-}}}(\mathbf {r} -\mathbf {p} _{-})\quad &\mathrm {if\ \mathbf {r} \in \ T_{-}} \\\mathbf {0} \quad &\mathrm {otherwise}\end{array}}\right.}
$$

\begin{figure}[ht!]
\centering
\includegraphics[trim={-4cm 10cm 0 10.5cm},clip, width=12cm]{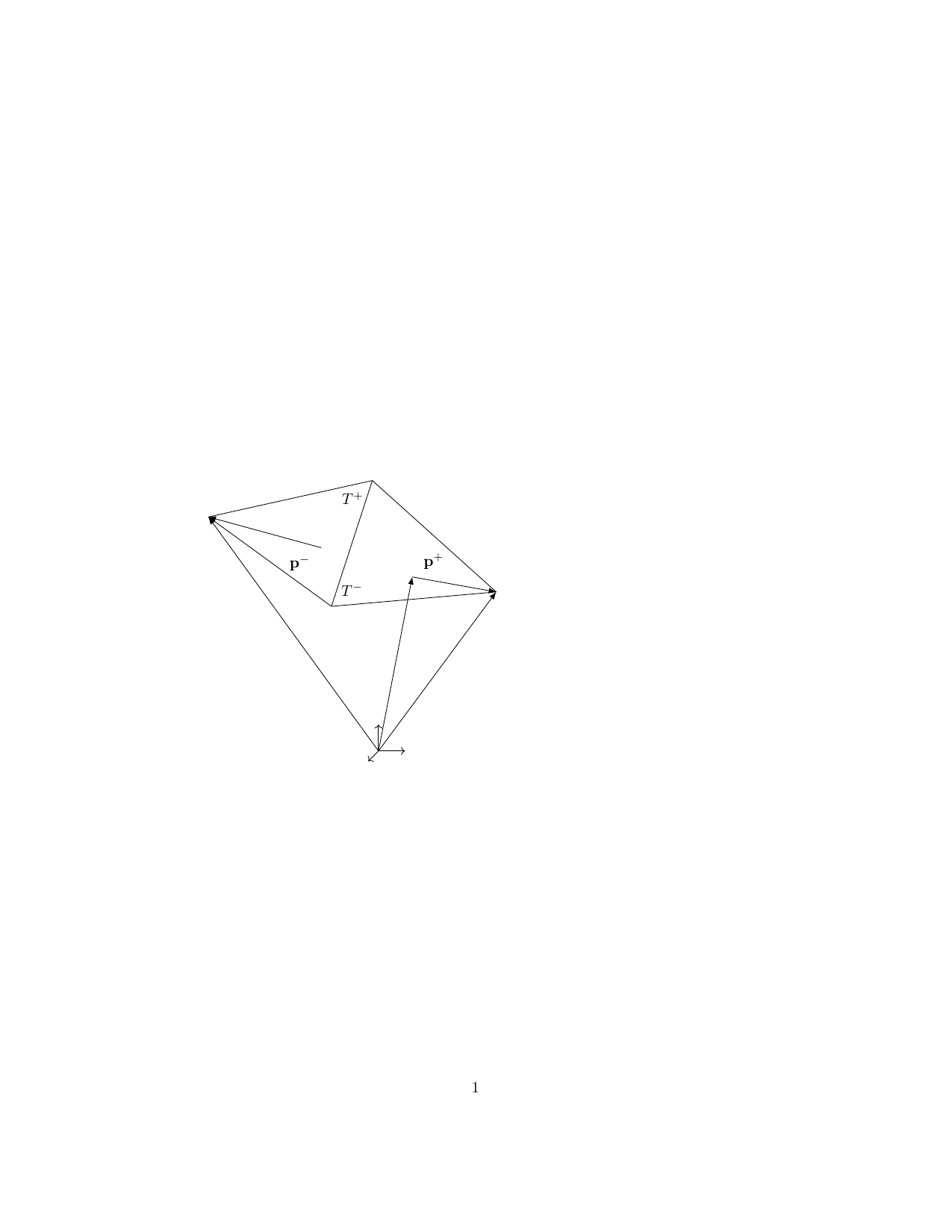}
\caption{Example of a RWG basis function}
\label{fig:rwg}
\end{figure}

The Rotated Rao-Wilton-Glisson (SNC) stands for a twist of the usual RWG basis function ($\varphi_{i}\times\pmb\nu$), which by construction are $\Htcurl$-conforming basis functions, and the Buffa-Christiansen basis functions (BC) denote $\Htdiv$-conforming basis functions that are built by linear combinations of RWG basis functions on a barycentric refinement of the original mesh. This linear combination has the advantage of forming a stable $\vecv{L}^{2}(\Gamma)$ product with SNC
basis functions (unlike ordinary RWG basis functions). We must mention that the BC basis functions can also be rotated, giving us the Rotated Buffa Christiansen (RBC) basis functions, and that P1 will denote the usual piecewise
linear basis functions on a triangular mesh.

To justify our implementation of the EtM-preconditioned EFIE, first, we take a look at the row $k$ of the matrix that represents the discrete implementation of the EFIE:

\begin{equation}\label{eq:disc_efie}
  \left[\SLOh \vecv{c}\right]_{k} = \sum_{j=1}^{\text{N}}c_{j}\langle\SLO\varphi_{j}, \varphi_{k}\rangle_{ \Gamma},
\end{equation}

where $\vecv{c} = \sum_{j=1}^{\text{N}}c_{i} \varphi_i$, where $\left\lbrace \varphi_i \right\rbrace$ are basis functions belonging to a discrete subspace of $\Htdiv$ like 
RWG or BC, and $(c_{1}, \dots, c_{\text{N}})\in \mathbb{R}^\text{N}$ is a vector of coefficients, ultimately, the unknown of our discrete problem. 
It is important to notice that \eqref{eq:disc_efie} $\langle \cdot, \cdot\rangle_{\Gamma}$ is the twisted inner product that we have introduced in \eqref{eq:twisted_inner_product},
so the operation $\langle\SLO(\cdot) , \mu_{h}\rangle_{ \Gamma}$ induces a twits on $\SLO$.

We call this a \emph{weak form} of the EFIE, and in the same way, we define a \emph{strong form} of the EFIE: $\mathbb{G}^{-1}\SLOh$,
where we call $\mathbb{G} = \left(\varphi_{j}, \varphi_{k}\right)_{\Gamma}$ a mass matrix. In this way, $\mathbb{G}^{-1}$  cancels the aforementioned twist induced on $\SLO$ by the twisted inner product in \eqref{eq:disc_efie}. Notice that this formulation is a map to $\mathbb{R}^\text{N}$ (the space of basis functions coefficients), unlike the weak form, which is a map to the discrete function space.

 We denote by $\mathbb{G}_{\text{A,B}} = \left( \varphi^{A}_{h}, \psi^{B}_{h} \right)_{\Gamma}$ the mass matrix built using basis functions of the type A and B.
 Using this notation, a discretisation of the Calder\'on Multiplicative Preconditioner \cite{andriulli2008multiplicative} reads:

\begin{equation}
    \mathbb{G}_{\text{RBC,RWG}}^{-1}\SLOh^{\text{BC, RBC}}\mathbb{G}_{\text{BC,SNC}}^{-1}\SLOh^{\text{RWG, SNC}}
\end{equation}

See that section \ref{sect:osrc_evs} elicits the use of strong form when computing the Calder\'on multiplicative preconditioner for the EFIE in the context of the Boundary Elements Method, otherwise the eigenvalues of this discrete operators would take the form $(t_{m}^{\pm})^{2}$ having as a result an operator with worse conditioning than the EFIO itself. 
However, a mesh refinement is needed since the authors of this method show in \cite{andriulli2008multiplicative} that $\mathbb{G}_{\text{BC,SNC}}$ 
avoids the ill-conditioning originally induced by $\mathbb{G}_{\text{RWG,SNC}}$, since this discrete operator is singular by construction.

It is also evident that the mappings between discrete spaces are consistent, i.e., $\SLOh^{\text{RWG, SNC}}$ takes an element from $\mathbb{R}^{\text{N}}$ 
to a vector function; the subsequent application of $\mathbb{G}_{\text{BC,SNC}}^{-1}$ transforms this element into an element of $\mathbb{R}^{\text{N}}$.
Finally, the application of $\mathbb{G}_{\text{RBC,RWG}}^{-1}\SLOh^{\text{BC, RBC}}$ is a map from $\mathbb{R}^{\text{N}}$ to $\mathbb{R}^{\text{N}}$.

In Table \ref{tab:etm_maps}, we summarize the mapping properties between the function and Euclidean spaces of the operators we have implemented.

Now, if we analyze $\tilde{\vecv{V}}_{\varepsilon, h}\SLOh$, we see the equivalence 

$$\SLOh(\cdot) = \langle\SLO(\cdot), \mu\rangle_{ \Gamma} = -\left(\vecv{T}_{\kappa}(\cdot), \mu\right)_{ \Gamma} = -\vecv{T}_{\kappa, h}$$

\noindent therefore, it is not necessary to remove the twist induced on $\SLO$ to apply $\tilde{\vecv{V}}_{\varepsilon, h}$. We also see that $\tilde{\pmb\Lambda}_{2,\varepsilon, h}^{-1}\pmb\Lambda_{1,\varepsilon, h}$ 
takes a vector function and returns a vector of coefficients so both the discrete and continuous domains and the ranges of $\tilde{\vecv{V}}_{\varepsilon, h}$ and $\SLOh$ are consistent. This results into a refinement free preconditioner, since it is not necessary to invert mass matrices in its construction. Finally, the discrete implementation of this operator reads:

\begin{align*}
    (\tilde{\pmb\Lambda}_{2,\varepsilon, h}^{-1}\pmb\Lambda_{1,\varepsilon, h})^{\text{SNC,SNC}}\SLOh^{\text{RWG, SNC}}\\
 \end{align*}

\begin{table}[htbp]
  \begin{center}
    \begin{tabular}{ |c|c|c| }
     \hline
     Operator & Takes & Returns\\ \hline
     $\tilde{\pmb\Lambda}_{1,\varepsilon, h}$, eq. \eqref{eq:pade_approximant} & Vector function & Vector function \\ \hline
     $\pmb\Lambda_{2,\varepsilon, h}$ & Coefficients & Vector function \\ \hline
     $\tilde{\pmb\Lambda}_{1,\varepsilon, h}^{-1}$, eq.\eqref{eq:pade_approximant2} & Vector function & Coefficients \\ \hline
     $\mathbb{G}$ & Coefficients & Vector function\\ \hline
    \end{tabular}
   \caption{\label{tab:etm_maps} Mapping properties of discrete operators.}
  \end{center}
  \end{table}

Now, in the case of the EtM, a naive approach suggests to test the formulation $\tilde{\pmb\Lambda}_{1,\varepsilon, h}^{-1}\pmb\Lambda_{2,\varepsilon, h}\SLOh$,
since $\SLOh$ induces a twist on $\SLO$. However, we have that $\tilde{\pmb\Lambda}_{1,\varepsilon, h}^{-1}\pmb\Lambda_{2,\varepsilon, h}$ takes elements from $\mathbb{R}^{\text{N}}$
to $\mathbb{R}^{\text{N}}$, and the range of $\SLOh$ are elements in a discrete subspace of $\Htdiv$. We use mass matrices again to ensure the consistency between discrete spaces and we propose the following formulation:

\begin{align}
   (\tilde{\pmb\Lambda}_{1,\varepsilon, h}^{-1}\pmb\Lambda_{2,\varepsilon, h})^{\text{RBC,RBC}}&(\mathbb{G}_{\text{RBC,RBC}}^{-1} \mathbb{G}_{\text{BC,RBC}})\;\mathbb{G}_{\text{BC,SNC}}^{-1}\SLOh^{\text{RWG, SNC}} \nonumber\\
   &=-(\tilde{\pmb\Lambda}_{1,\varepsilon, h}^{-1}\pmb\Lambda_{2,\varepsilon, h})^{\text{RBC,RBC}}\mathbb{G}_{\text{BC,SNC}}^{-1}\SLOh^{\text{RWG, SNC}}.\label{eq:etm_efie_disc}
\end{align}

Here $\mathbb{G}_{\text{BC,RBC}}$ is a discrete implementation of the twist operator, $\mathbb{G}_{\text{RBC,RBC}}^{-1}$ maps this operation back to $\mathbb{R}^\text{N}$, preserving consistency between discrete spaces. 
Their product is the inverse multiplicative in $\mathbb{R}^{\text{N}}$, so their implementation is not necessary in practice.

With the proposed formulations, it is left to answer how do we apply the operators $\tilde{\pmb\Lambda}_{1,\varepsilon, h}$ and $\tilde{\pmb\Lambda}_{1,\varepsilon, h}^{-1}$.
The implementation of $\tilde{\pmb\Lambda}_{1,\varepsilon, h}$ can be found in \cite{BetckeFierroPiccardo}, and for $\tilde{\pmb\Lambda}_{1,\varepsilon, h}^{-1}$,
just like in \cite{el2014approximate,BetckeFierroPiccardo}, we introduce the following matrices:

\begin{equation}
\begin{cases}
\mathbb{G} = \int_{\Gamma_{h}}\vecv{t}\cdot \vecv{r}\; d\Gamma_{h}, \;\; \mathbb{N}_{\varepsilon} = \int_{\Gamma_{h}} \frac{1}{\kappa_{\varepsilon}^{2}}\hscurl\vecv{t} \cdot\hscurl\vecv{r}\; d\Gamma_{h}\\
\mathbb{K}_{\varepsilon}= \int_{\Gamma_{h}}\kappa_{\varepsilon}^{2}\eta \; \lambda\; d\Gamma_{h},\;\; \mathbb{L} = \int_{\Gamma_{h}} \hsgrad \eta \cdot \vecv{t} \;d\Gamma_{h}, \\
\end{cases}
\end{equation}

\noindent where $\vecv{t}$ and $\vecv{r}$ are Rotated BC (RBC) basis functions and $\eta$, $\lambda$ are piecewise linear (P1) basis functions. With this, we can build the matrix-vector product, 
$\tilde{\pmb\Lambda}_{1,\varepsilon, h}^{-1}\pmb\Lambda_{2,\varepsilon, h}\vecv{r}$, where $\vecv{r}\in \mathbb{R}^{\text{N}}$. With this:

\begin{equation}\label{eq:matrix_system}
    \begin{bmatrix}
    (d_{\ell}\mathbb{G}-\mathbb{N}_{\varepsilon}) & \mathbb{L} \\
    \mathbb{L}^{T} & \mathbb{K}_{\varepsilon} \\
    \end{bmatrix} \begin{bmatrix}
    \pmb{\phi}_{h}^{\ell}\\
    \rho_{h}^{\ell}\\
    \end{bmatrix} 
     = \begin{bmatrix}
    (\mathbb{G}-\mathbb{N}_{\varepsilon})\vecv{r}\\
    0\\
    \end{bmatrix},
\end{equation}
\noindent and then
\begin{align*}
    \tilde{\pmb\Lambda}_{1,\varepsilon, h}^{-1}\pmb\Lambda_{2,\varepsilon, h}\vecv{r} = \sum_{\ell=0}^{L_2-1} c_{\ell}\pmb{\phi}_{h}^{\ell}.
\end{align*}

\begin{Rem}

After the previous analysis, we propose discrete formulations for the right preconditioners in \eqref{eq:osrc-eigenvalues1-right} and \eqref{eq:osrc-eigenvalues2-right}:

\begin{equation}\label{eq:discrete_etm_right}
    -\SLOh^{\text{RWG, SNC}}(\tilde{\pmb\Lambda}_{1,\varepsilon, h}^{-1}\pmb\Lambda_{2,\varepsilon, h})^{\text{SNC,SNC}}
\end{equation}

\noindent which turns out to be a refinement-free formulation, and

\begin{equation}\label{eq:discrete_mte_right}
    \SLOh^{\text{RWG, SNC}}(\pmb\Lambda_{2,\varepsilon, h}^{-1}\tilde{\pmb\Lambda}_{1,\varepsilon, h})^{\text{SNC,SNC}}\;\mathbb{G}_{\text{SNC, SNC}},
 \end{equation}

 \noindent which also happens to be refinement-free.

 You can see that solving a system of equations using these discrete formulations results in a vector of coefficients for SNC basis functions. 
These are the same coefficients associated with RWG basis functions.

\end{Rem}

In the next section, we show the results we have obtained from its Galerkin implementation.

\subsection{Numerical Experiments}

Using bempp-legacy \cite{smigaj2015solving} we obtained the discrete formulations for all the operators we have presented in section \ref{sect:wf_and_sf}.
As a reference, we start by calculating the eigenvalues from the Calder\'on Preconditioned EFIE on a unit sphere. If we look at the experimental eigenvalues of this
formulation (figure \ref{fig:calderon_exp}), we see that there is an accumulation point around $-0.25$, as we predicted from the analytical calculation we have performed before (figure \ref{fig:calderon_spectrum}). 

\begin{figure}[ht!]
\centering
\includegraphics[width=8cm]{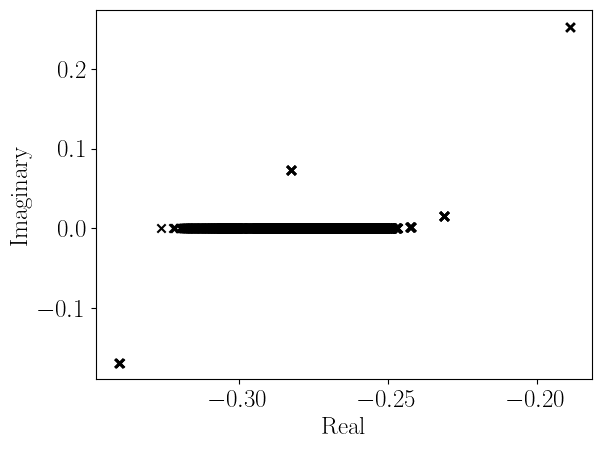}
\caption{Eigenvalues of the Calder\'on-Preconditined EFIE.}
\label{fig:calderon_exp}
\end{figure}

We have also calculated the eigenvalues from the formulations $\tilde{\wLambda}_{\varepsilon}$ and $\tilde{\vecv{V}}_{\varepsilon}$ 
on a unit sphere. We show these eigenvalues in the plots on the left-hand side of Figures \ref{fig:etm_evs} and \ref{fig:mte_evs}, 
respectively, and if we compare them with $L_{m,1}^{\pm}$, we see that their structure is similar to the spectrum shown in Figure
\ref{fig:convergence}. We must bear in mind that the operators associated with the EtM operator have been discretized using RBC basis functions, thus the difference in scaling compared to analytic eigenvalues $L_{m,1}^{\pm}$.

When comparing the eigenvalues of the preconditioned formulations, we observe that the spectrums of $\tilde{\pmb\Lambda}_{\varepsilon, h}\mathbb{G}^{-1}\SLOh$ \footnote{in this section 
we have omitted subscripts indicating discrete spaces to make it more amenable for the reader} and $\tilde{\vecv{V}}_{\varepsilon,h}\SLOh$ are inverted with respect to each other and that the MtE-Preconditioned EFIE respects the distribution of eigenvalues in Figure \ref{fig:efie-osrc}. The EtM preconditioner is also rescaled due to the use of BC basis functions.

\begin{figure}[ht!]
\includegraphics[width=\linewidth]{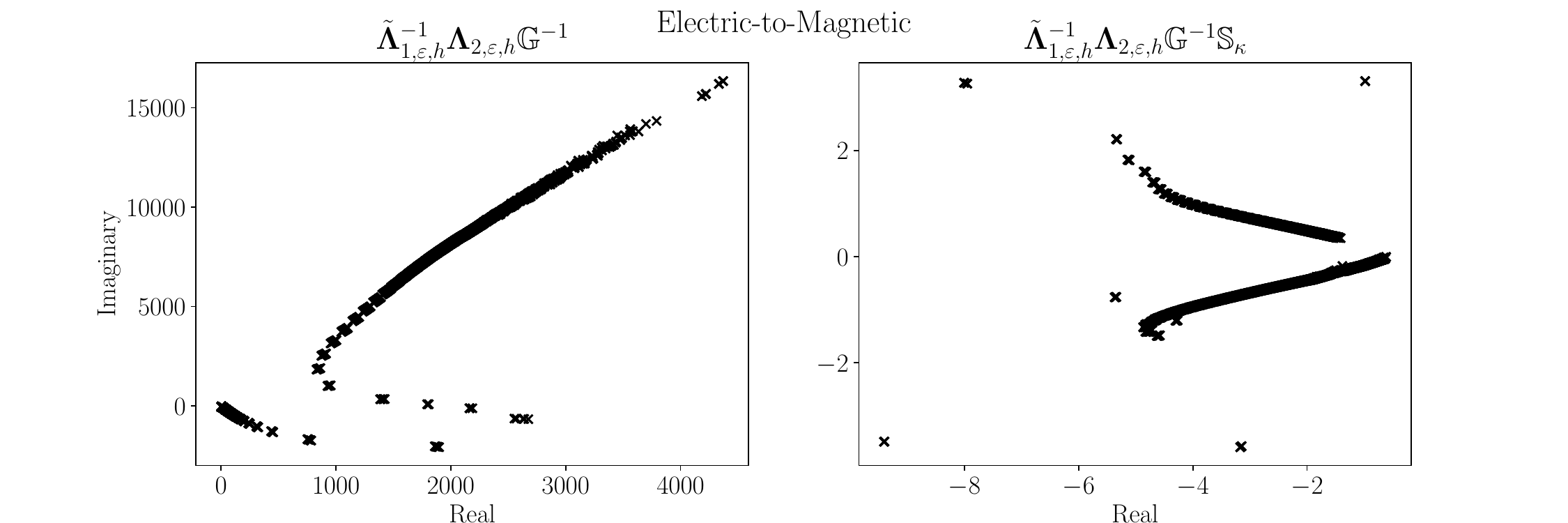}
\caption{Eigenvalues associated to the discretisation $\tilde{\pmb\Lambda}_{1,\varepsilon, h}^{-1}\pmb\Lambda_{2,\varepsilon, h}\mathbb{G}^{-1}$ and $\tilde{\pmb\Lambda}_{1,\varepsilon, h}^{-1}\pmb\Lambda_{2,\varepsilon, h}\mathbb{G}^{-1}\SLOh$.}
\label{fig:etm_evs}
\end{figure}

\begin{figure}[ht!]
\includegraphics[width=\linewidth]{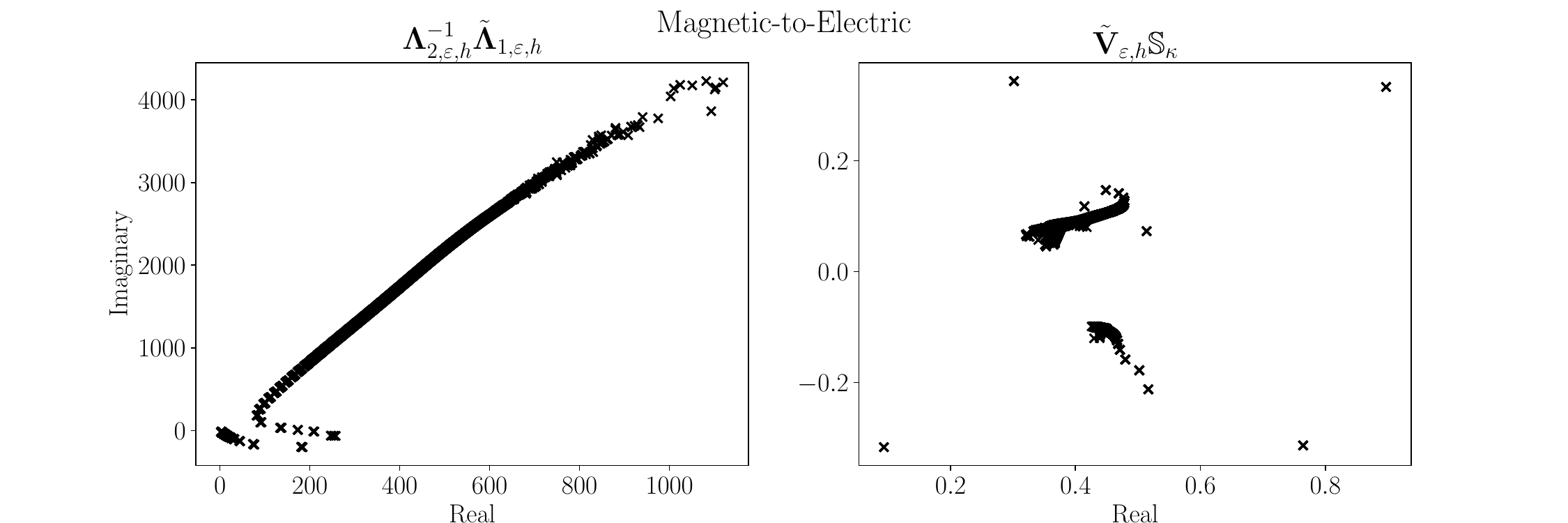}
\caption{Eigenvalues associated to the discretisation $\pmb\Lambda_{2,\varepsilon, h}^{-1}\tilde{\pmb\Lambda}_{1,\varepsilon, h}$ and $\tilde{\vecv{V}}_{\varepsilon,h}\SLOh$. }
\label{fig:mte_evs}
\end{figure}

When testing the preconditioned formulations, we see in Figure \ref{fig:iterations_wn} that the EtM operator
can considerably reduce the number of iterations needed to solve the EFIE and all EFIE-preconditioned formulations seem to increase their iterations regarding the wave number at a similar rate.

\begin{figure}[ht!]
\centering
\includegraphics[width=12cm]{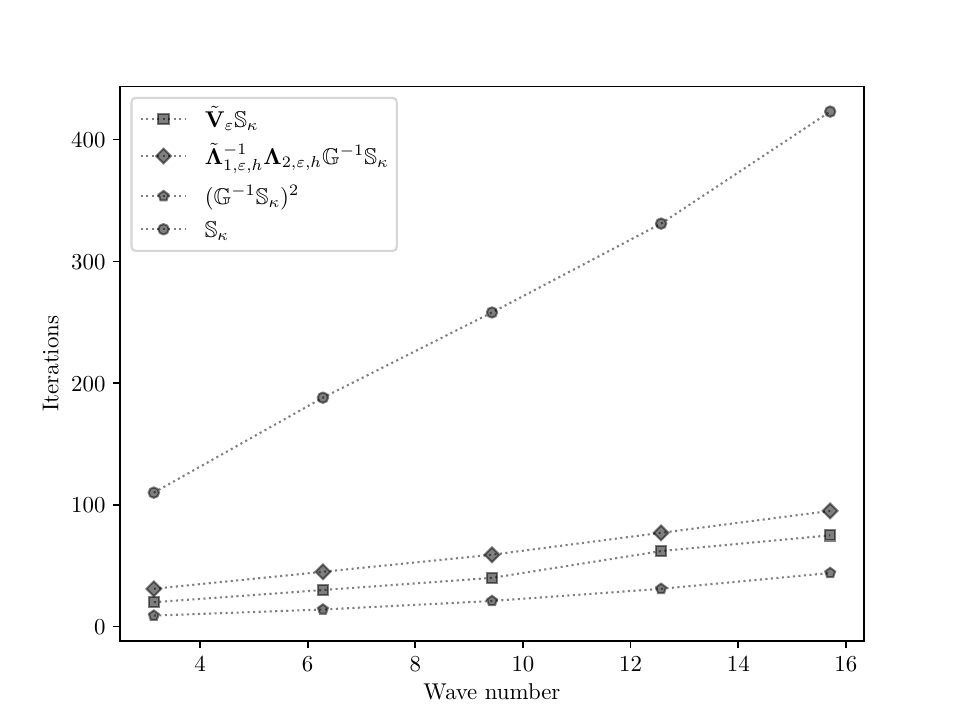}
\caption{Iterations for different EFIEs and preconditioned EFIEs, with a constant number of 8 points per wavelength on a unit sphere.}
\label{fig:iterations_wn}
\end{figure}

However, as shown in Figure \ref{fig:iterations_disc},
the performance of the EtM-preconditioned EFIE appears 
to degrade with mesh refinement. In particular, Figure \ref{fig:etm_evs}
reveals that the spectral branches of the preconditioned system tend to 
shift toward the origin—an effect not observed in the MtE case. 
This behavior may be attributed to an unintended rescaling 
introduced by the BC basis functions, warranting further investigation.

\begin{figure}[ht!]
\centering
\includegraphics[width=12cm]{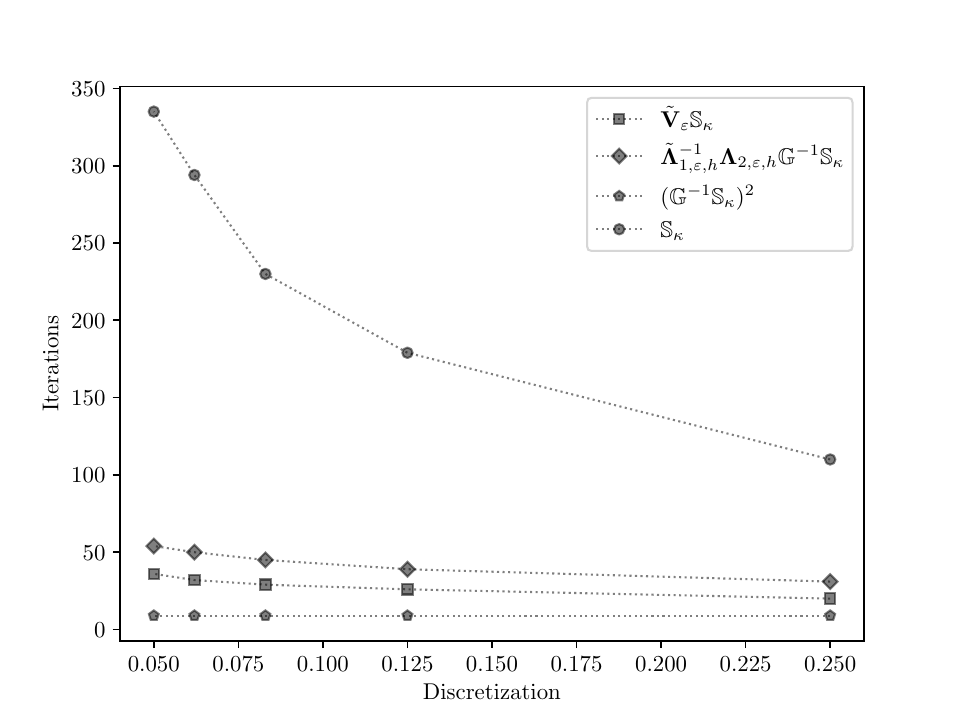}
\caption{Iterations for different EFIEs and preconditioned EFIEs, with a constant wavenumber of $\pi$ and decreasing discretisation.}
\label{fig:iterations_disc}
\end{figure}

Table \ref{tab:assembly_times_kappa} presents a
comparison of assembly time ratios between the preconditioned EFIE and 
the unpreconditioned EFIE (in both weak and strong formulations). The 
results indicate that the EtM-preconditioned EFIE is computationally inefficient, 
with its assembly time reaching up to three times that of the unpreconditioned EFIE, 
while also underperforming relative to the Calderón multiplicative preconditioner. 
This inefficiency stems primarily from the projection between RWG barycentric 
and BC basis functions in our implementation, which, in Bempp-legacy, 
involves matrix inversion. Replacing this step with a direct projection 
computation is expected to significantly improve performance by reducing the associated computational cost.

\begin{table}[htbp]
\centering
\begin{tabular}{|c|c|c|c|c|c|}\hline
 Comparison - $\kappa$ & $\pi$ & 2$\pi$ & 3$\pi$ & 4$\pi$ & 5$\pi$\\
\hline
$(\mathbb{G}^{-1}\SLOh)^2$/$\mathbb{G}^{-1}\SLOh$ & 1.02 & 1.05 & 1.05 & 1.05 & 1.04 \\\hline
$\tilde{\pmb\Lambda}_{1,\varepsilon, h}^{-1}\pmb\Lambda_{2,\varepsilon, h}\mathbb{G}^{-1}\SLOh$/$\mathbb{G}^{-1}\SLOh$ & 1.34 & 1.81 & 2.28 & 2.5 & 2.98 \\\hline
$\tilde{\vecv{V}}_{\varepsilon,h}\SLOh /\SLOh$ & 1.14 & 1.41 & 1.53 & 1.7 & 1.91  \\\hline
\end{tabular}
\caption{Comparison between the total assembly times of different preconditioned versions of the EFIE, and the assembly time of the EFIE (either weak $\SLOh$ or strong form $\mathbb{G}^{-1}\SLOh$ when corresponding).}
\label{tab:assembly_times_kappa}
\end{table}

Regarding assembly time, the MtE operator exhibits the best performance, although 
its efficiency deteriorates with increasing wavenumber when the number of points 
per wavelength is kept constant. Nevertheless, this drawback can be mitigated by 
exploiting parallelism in constructing both OSRC operators, which offers potential 
for acceleration. Furthermore, Table \ref{tab:tot_times_kappa} demonstrates that a 
reduction in solving time compensates for the additional assembly time required by 
the MtE operator. Additionally, Table \ref{tab:solving_times_kappa} shows that the
benefits of OSRC preconditioning become more pronounced as the wavenumber increases. 
The total runtimes support the conclusion that the solution phase constitutes the dominant 
computational cost in PEC scattering problems, reinforcing the motivation for employing 
preconditioners to treat more complex geometries.

\begin{table}[htbp]
\centering
\begin{tabular}{|c|c|c|c|c|c|}\hline
 Comparison - $\kappa$ & $\pi$ & 2$\pi$ & 3$\pi$ & 4$\pi$ & 5$\pi$\\
\hline
$(\mathbb{G}^{-1}\SLOh)^2$/$\mathbb{G}^{-1}\SLOh$ & 0.16 & 0.1 & 0.11 & 0.14 & 0.15 \\\hline
$\tilde{\pmb\Lambda}_{1,\varepsilon, h}^{-1}\pmb\Lambda_{2,\varepsilon, h}\mathbb{G}^{-1}\SLOh$/$\mathbb{G}^{-1}\SLOh$ & 0.89 & 0.63 & 0.56 & 0.62 & 0.57 \\\hline
$\tilde{\vecv{V}}_{\varepsilon,h}\SLOh /\SLOh$ & 0.67 & 0.62 & 0.43 & 0.47 & 0.38 \\\hline
\end{tabular}
\caption{Comparison between the solving times of different preconditioned versions of the EFIE, and the solving time of the EFIE (either weak $\SLOh$ or strong form $\mathbb{G}^{-1}\SLOh$ when corresponding).}
\label{tab:solving_times_kappa}
\end{table}

To have a reference of the overall improvement provided by the MtE operator, Table \ref{tab:tot_rel_times_kappa} shows a 
comparison between the total runtimes of the preconditioned formulations compared to the time it takes to assemble the EFIE 
on the primal mesh. The results show that even for a sphere, it is convenient to use a MtE preconditioner.

\begin{table}[htbp]
\centering
\begin{tabular}{|c|c|c|c|c|c|}\hline
Comparison - $\kappa$ & $\pi$ & 2$\pi$ & 3$\pi$ & 4$\pi$ & 5$\pi$\\
\hline
$(\mathbb{G}^{-1}\SLOh)^2$/$\mathbb{G}^{-1}\SLOh$ & 0.69 & 0.51 & 0.5 & 0.48 & 0.44 \\\hline
$\tilde{\pmb\Lambda}_{1,\varepsilon, h}^{-1}\pmb\Lambda_{2,\varepsilon, h}\mathbb{G}^{-1}\SLOh$/$\mathbb{G}^{-1}\SLOh$ & 1.17 & 1.13 & 1.27 & 1.33 & 1.35 \\\hline
$\tilde{\vecv{V}}_{\varepsilon,h}\SLOh /\SLOh$ & 0.98 & 1.07 & 0.88 & 0.93 & 0.87 \\\hline
\end{tabular}
\caption{Comparison between the total runtimes of different preconditioned versions of the EFIE, and the runtime of the EFIE (either weak $\SLOh$ or strong form $\mathbb{G}^{-1}\SLOh$ when corresponding).}
\label{tab:tot_times_kappa}
\end{table}

\begin{table}[htbp]
\centering
\begin{tabular}{|c|c|c|c|c|c|}\hline
 Comparison - $\kappa$ & $\pi$ & 2$\pi$ & 3$\pi$ & 4$\pi$ & 5$\pi$\\
\hline
$(\mathbb{G}^{-1}\SLOh)^2$/$\mathbb{G}^{-1}\SLOh$ & 7.53 & 5.11 & 4.23 & 4.79 & 5.52 \\\hline
$\tilde{\pmb\Lambda}_{1,\varepsilon, h}^{-1}\pmb\Lambda_{2,\varepsilon, h}\mathbb{G}^{-1}\SLOh$/$\mathbb{G}^{-1}\SLOh$ & 12.76 & 11.29 & 10.75 & 13.29 & 16.97 \\\hline
$\tilde{\vecv{V}}_{\varepsilon,h}\SLOh /\SLOh$ & 0.98 & 1.07 & 0.88 & 0.93 & 0.87 \\\hline
\end{tabular}
\caption{Comparison between the total runtimes of different preconditioned versions of the EFIE, and the solving time of the EFIE on the primal mesh.}
\label{tab:tot_rel_times_kappa}
\end{table}

To finalize, in Figure \ref{fig:far_field_results} we show an example of the scattered field produced by the EFIE and its
preconditioned formulations in $\Omega^{+}$. These can be computed once calculated $\magnetic^{+}\vecv{e}^{s}$ using the 
Stratton-Chu representation formula \eqref{stratton-chu} with an incident field $\vecv{e}^{i} = (\exp^{-i\kappa z}, 0, 0)$. 
Figure \ref{fig:rel_errs} shows the relative error calculated against the analytic solution to this problem, demonstrating that there is no loss in convergence when preconditioning the EFIE.

\begin{figure}[ht!]
\centering
\begin{minipage}{0.45\linewidth}
\includegraphics[width=\linewidth]{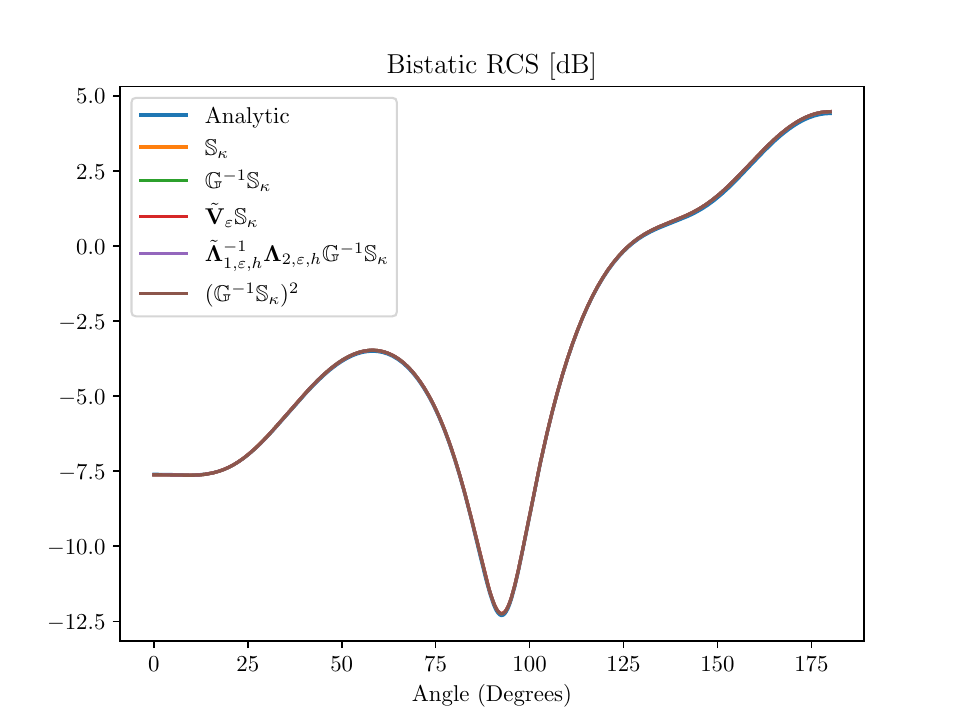}
\end{minipage}
\begin{minipage}{0.45\linewidth}
\includegraphics[width=\linewidth]{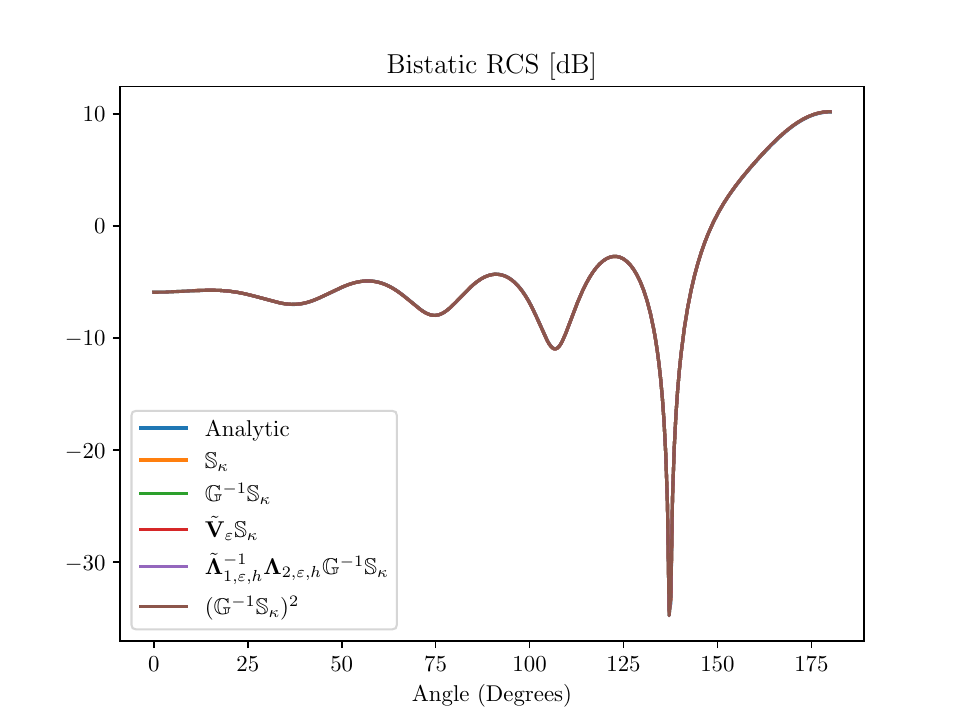}
\end{minipage} \\
\includegraphics[width=10cm]{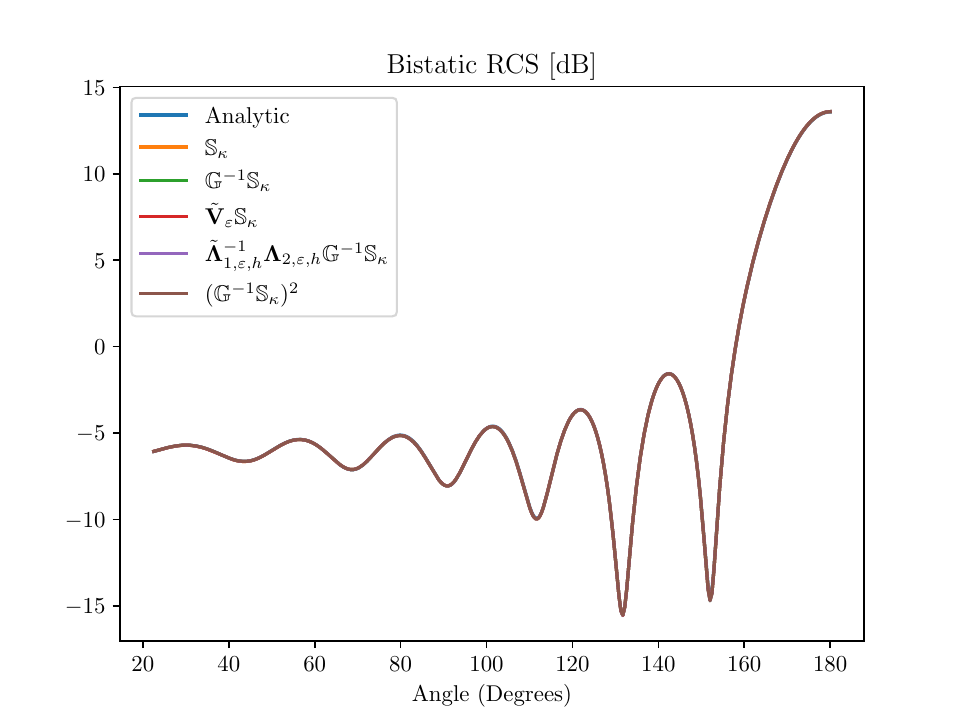}
\caption{Far fields produced by different EFIE formulations on a sphere with wavenumbers of $\pi$, $2\pi$, and $3\pi$ respectively.}
\label{fig:far_field_results}
\end{figure}

\begin{figure}[ht]
\centering
\includegraphics[width=12cm]{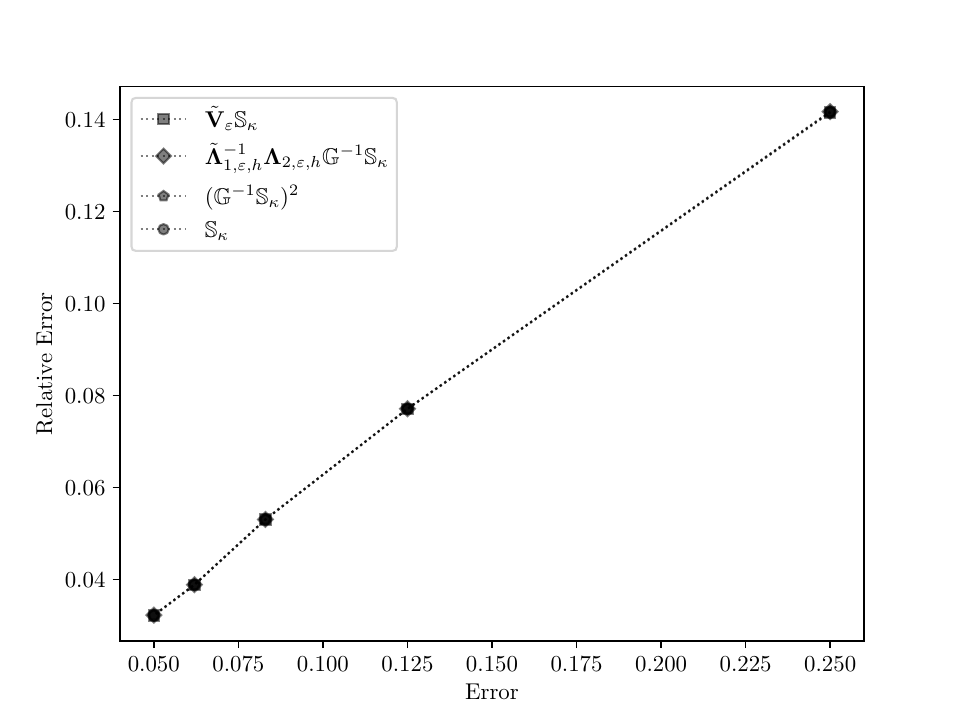}
\caption{Relative error against the analytic solution of scattering by a PEC sphere.}
\label{fig:rel_errs}
\end{figure}

\section{Conclusions}\label{sect:conclusions}

In this work, we have provided justification for the use of approximate Electric-to-Magnetic (EtM) and Magnetic-to-Electric (MtE) operators as preconditioners for the Electric Field Integral Equation (EFIE). In particular, we demonstrated that OSRC-preconditioned formulations yield unique solutions in the high-frequency regime. Spectral analysis in the spherical case demonstrates the effectiveness of the preconditioning strategy.
%In this work, we have presented proofs that support the usage of approximate EtM and MtE operators as preconditioners for the EFIE. we have also proved that 
%the OSRC-preconditioned formulations produce unique solutions in a high-frequency context (more specifically for $\kappa > 1.074\mathcal{H}$, where $\mathcal{H}$ 
%is the mean curvature of the body).
%The spectral analysis of the OSRC operators indicates that the eigenvalues of $\tilde{\pmb\Lambda}_{\varepsilon}$ and $\tilde{\vecv{V}}_{\varepsilon}$ are 
%scaled approximations of (or the inverse of) the eigenvalues of the EFIE on a sphere. This allows for the 
%clustering of eigenvalues that otherwise would (at the same time) diverge to infinity and converge to zero. %Under the assumption that we can locally approximate a surface by a sphere, 
%it is understandable that this behavior extends to more general closed geometries, which is supported by the results 
%in \cite{BetckeFierroPiccardo}.
Regarding the possible discretisations of these expressions, we have shown the importance of the twist operator, $(\pmb\nu\times\cdot)$, when preserving desired 
continuous and discrete mapping properties between operators. Moreover, in a Galerkin setting, 
$\tilde{\vecv{V}}_{\varepsilon}$ is an example of a preconditioner that does not require to \emph{twist} 
the EFIE, which translates into a refinement-free formulation which unfortunately cannot be avoided when 
the EtM operator and the Calder\'on maps are used, which ultimately hinders their computational complexity. 
%T presence of 
%$(\pmb\nu\times\cdot)$, which results in an isomorphism between $\Htdiv$ and $\Htcurl$ on closed smooth 
%surfaces, that provides spatial consistency between operators and has the characteristic of being its self 
%inverse, but that must be dealt with care when implemented in a discrete scheme.
%Despite this refinement-free feature of the MtE operator, it would be interesting to study how this changes the discrete stability analysis of this 
%preconditioned formulation in the light of the \emph{Operator Preconditioning theory} in \cite{hiptmair2006operator}.
Finally, we offered a more practical perspective in a Boundary Element Method context by presenting implementations of the discrete versions of both $\tilde{\pmb\Lambda}_{\varepsilon}$ and $\tilde{\vecv{V}}_{\varepsilon}$, which
confirm that the MtE operator remains the most cost-effective among the OSRC and Calder\'on preconditioners. Its refinement-free nature and consistent performance contrast with the EtM operator, which exhibits issues likely stemming from undesirable scaling introduced by using BC basis functions. In light of this, further investigation into the EtM operator as a right preconditioner---as outlined 
in equation~\eqref{eq:discrete_etm_right}---should be performed. %The results presented in this work suggest that 
%this approach is promising for reducing the number of iterations required to solve the EFIE. 
%Moreover, avoiding BC basis functions is expected to decrease the discretized EtM operator's assembly time substantially.

\pagebreak
\section{Appendix: supporting proofs}\label{sect:appendix}

\begin{proposition}\label{prop:mte_etm_flipped}
Given the definitions of $\pmb\Lambda$ \eqref{EtM} and $\vecv{V}$ \eqref{MtE}, on closed Liptschitz surfaces, we have 
\begin{enumerate}
    \item [a)]$\pmb\Lambda\SLO=-\left(\frac{\vecv{I}}{2} + \DLO\right)$
    \item [b)] $\SLO\vecv{V}=-\left(\frac{\vecv{I}}{2}-\DLO\right)$
\end{enumerate} 
\end{proposition}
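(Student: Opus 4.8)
The two identities are the order-reversed companions of the relations $\SLO\pmb\Lambda=-\left(\frac{\vecv{I}}{2}+\DLO\right)$ and $\vecv{V}\SLO=-\left(\frac{\vecv{I}}{2}-\DLO\right)$ already recalled in the main text, and the plan is to reduce both to two structural identities satisfied by the elementary operators $\SLO$ and $\DLO$. Substituting the definition $\pmb\Lambda=-\SLO^{-1}\left(\frac{\vecv{I}}{2}+\DLO\right)$ into the left-hand side of (a) gives $\pmb\Lambda\SLO=-\SLO^{-1}\left(\frac{\vecv{I}}{2}+\DLO\right)\SLO$, so that (a) is equivalent to the commutation relation $\left(\frac{\vecv{I}}{2}+\DLO\right)\SLO=\SLO\left(\frac{\vecv{I}}{2}+\DLO\right)$, i.e.\ to $\DLO\SLO=\SLO\DLO$. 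Likewise, substituting $\vecv{V}=-\left(\frac{\vecv{I}}{2}+\DLO\right)^{-1}\SLO$ into (b) gives $\SLO\vecv{V}=-\SLO\left(\frac{\vecv{I}}{2}+\DLO\right)^{-1}\SLO$, which, once the same commutation is available, collapses to $-\left(\frac{\vecv{I}}{2}+\DLO\right)^{-1}\SLO^{2}$; hence (b) follows from the quadratic Calder\'on identity $\SLO^{2}=\left(\frac{\vecv{I}}{2}+\DLO\right)\left(\frac{\vecv{I}}{2}-\DLO\right)=\frac{\vecv{I}}{4}-\DLO^{2}$.

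The first step, therefore, is to produce these two identities from the exterior Calder\'on projector of \eqref{eq:calderon_projector}. Writing $P$ for the block operator appearing there and using that a Calder\'on projector is idempotent ($P^{2}=P$), I would read off the diagonal and off-diagonal $2\times2$ blocks of the equation $P^{2}=P$: the off-diagonal blocks yield the commutation between $\SLO$ and $\DLO$, while the diagonal blocks yield the quadratic relation linking $\SLO^{2}$ and $\DLO^{2}$. These are exactly the two facts needed above, and they are available on a closed surface, where $\Htdiv$ is its own dual under the twisted pairing $\langle\cdot,\cdot\rangle_{\Gamma}$ of \eqref{eq:twisted_inner_product}, so that the compositions $\SLO\DLO$ and $\DLO\SLO$ are meaningful as endomorphisms once the implicit rotation is accounted for.

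The delicate point, and the main obstacle, is precisely this bookkeeping of the twist. Both $\SLO$ and $\DLO$ map $\Htdiv$ into $\Htcurl$, so the symbol $\frac{\vecv{I}}{2}+\DLO$ and the products $\SLO\DLO$, $\DLO\SLO$ only make sense after inserting the rotation $\pmb\nu\times\cdot$ that identifies $\Htcurl$ with $\Htdiv$; an inconsistent placement of this rotation flips a sign and turns the commutation relation into an anticommutation, which would invalidate (a). I would therefore fix once and for all the convention under which \eqref{eq:calderon_projector} is idempotent, verify that it is the very convention used in the definitions \eqref{EtM}--\eqref{MtE}, and only then extract the two identities. With the signs pinned down, (a) is immediate from the commutation relation and (b) from the commutation relation together with the quadratic identity, after cancelling the outer $\SLO^{-1}\SLO$ and $\left(\frac{\vecv{I}}{2}+\DLO\right)^{-1}\left(\frac{\vecv{I}}{2}+\DLO\right)$, respectively.
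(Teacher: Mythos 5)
Your reduction of (a) to the commutation relation $\SLO\DLO=\DLO\SLO$, and of (b) to that relation together with the quadratic identity $\SLO^{2}=\frac{\vecv{I}}{4}-\DLO^{2}$, is algebraically sound, and it is a genuinely different route from the paper's: the paper never invokes the off-diagonal Calder\'on identity, but instead writes $\SLO=\left(\frac{\vecv{I}}{4}-\DLO^{2}\right)^{1/2}$ and $\SLO^{-1}=\left(\frac{\vecv{I}}{4}-\DLO^{2}\right)^{-1/2}$, factors these into products of $\left(\frac{\vecv{I}}{2}\pm\DLO\right)^{\pm1/2}$, and freely commutes all such factors before cancelling --- that is, it obtains the needed commutativity by treating $\SLO$ as a function of $\DLO$ via a square-root representation (a step which is itself only asserted, since an operator is not determined by its square).

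The genuine gap in your proposal is that the source you name for the commutation relation does not deliver it. Taking the projector of \eqref{eq:calderon_projector} at face value and computing the $(1,2)$ block of $P^{2}=P$ gives $-\SLO+\DLO\SLO+\SLO\DLO=-\SLO$, i.e.\ the \emph{anti}commutation $\SLO\DLO+\DLO\SLO=0$, which is the standard off-diagonal Maxwell Calder\'on identity; likewise the diagonal blocks give $\SLO^{2}=\DLO^{2}-\frac{\vecv{I}}{4}$, not $\frac{\vecv{I}}{4}-\DLO^{2}$. Under anticommutation the conjugation in your first step evaluates to $\SLO^{-1}\left(\frac{\vecv{I}}{2}+\DLO\right)\SLO=\frac{\vecv{I}}{2}-\DLO$, so (a) would come out as $-\left(\frac{\vecv{I}}{2}-\DLO\right)$: the sign of $\DLO$ flips, which is exactly the failure mode you describe as ``invalidating (a)''. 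You correctly identify this sign bookkeeping as the delicate point, but you defer its resolution (``I would fix the convention and verify\dots''), and that resolution is the entire content of the proposition: whether $\SLO$ and $\DLO$ commute or anticommute under the conventions of \eqref{EtM}--\eqref{MtE} is precisely what decides which of the two candidate right-hand sides $\pmb\Lambda\SLO$ equals. As written, the argument is therefore incomplete, and its most natural completion (idempotency of the projector as printed) leads to the opposite of the claimed identity rather than to it.
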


\begin{proof}

From the Calder\'on projector we know that $\SLO^{2} = \frac{\vecv{I}}{4} - \DLO^{2}$, so we have that

\begin{align*}
    \SLO &= \left(\frac{\vecv{I}}{4} - \DLO^{2}\right)^{1/2}\\
    \SLO^{-1} &= \left(\frac{\vecv{I}}{4} - \DLO^{2}\right)^{-1/2}
\end{align*}

With this,

\begin{enumerate}
    \item [a)]

    \begin{align*}
        \pmb\Lambda\SLO &= - \SLO^{-1}\left(\frac{\vecv{I}}{2} + \DLO\right)\SLO\\
        &= -\left(\frac{\vecv{I}}{4} - \DLO^{2}\right)^{-1/2}\left(\frac{\vecv{I}}{2} + \DLO\right)\left(\frac{\vecv{I}}{4} - \DLO^{2}\right)^{1/2}\\
        &= -\left(\frac{\vecv{I}}{2} - \DLO\right)^{-1/2}\left(\frac{\vecv{I}}{2} + \DLO\right)^{-1/2}\left(\frac{\vecv{I}}{2} + \DLO\right)\left(\frac{\vecv{I}}{4} - \DLO^2\right)^{1/2}\\
        &= -\left(\frac{\vecv{I}}{2} - \DLO\right)^{-1/2}\left(\frac{\vecv{I}}{2} + \DLO\right)^{1/2}\left(\frac{\vecv{I}}{2} - \DLO\right)^{1/2}\left(\frac{\vecv{I}}{2} + \DLO\right)^{1/2}\\
        &= -\left(\frac{\vecv{I}}{2} - \DLO\right)^{-1/2}\left(\frac{\vecv{I}}{2} - \DLO\right)^{1/2}\left(\frac{\vecv{I}}{2} + \DLO\right)^{1/2}\left(\frac{\vecv{I}}{2} + \DLO\right)^{1/2}\\ 
        &= -\left(\frac{\vecv{I}}{2} + \DLO\right).
    \end{align*}

    \item[b)]

    \begin{align*}
        \SLO\vecv{V} &= -\SLO\left(\frac{\vecv{I}}{2} + \DLO\right)^{-1}\SLO\\
        &= -\left(\frac{\vecv{I}}{4} - \DLO^{2}\right)^{1/2}\left(\frac{\vecv{I}}{2} + \DLO\right)^{-1}\left(\frac{\vecv{I}}{4} - \DLO^{2}\right)^{1/2}\\
        &=-\left(\frac{\vecv{I}}{2} - \DLO\right)^{1/2}\left(\frac{\vecv{I}}{2} + \DLO\right)^{1/2}\left(\frac{\vecv{I}}{2} + \DLO\right)^{-1}\left(\frac{\vecv{I}}{2} + \DLO\right)^{1/2}\left(\frac{\vecv{I}}{2} - \DLO\right)^{1/2}\\
        &= -\left(\frac{\vecv{I}}{2} - \DLO\right).
    \end{align*}
\end{enumerate}

\end{proof}

\bibliography{bibliography}{}
\bibliographystyle{plain}
\end{document}